\documentclass[reqno]{amsart}

\usepackage{amsmath,enumerate}
\usepackage[colorlinks=true,allcolors=black]{hyperref}
\usepackage[nameinlink]{cleveref} 
\usepackage{amsaddr} 

\theoremstyle{plain}
\newtheorem{theorem}{Theorem}[section]
\newtheorem{lemma}[theorem]{Lemma}
\newtheorem{proposition}[theorem]{Proposition}
\newtheorem{corollary}[theorem]{Corollary}
\newtheorem{conjecture}[theorem]{Conjecture}

\theoremstyle{definition}

\newtheorem{example}[theorem]{Example}

\theoremstyle{remark}
\newtheorem{remark}[theorem]{Remark}

\begin{document}

\title[Laplacian spectra of power graphs of certain finite groups]{Laplacian spectra of power graphs of certain finite groups}

\author[Ramesh Prasad Panda]{Ramesh Prasad Panda}

\address{Department of Mathematics\\ Indian Institute of Technology Guwahati\\ Guwahati, Assam - 781039, India\\ rppanda89@gmail.com}

\begin{abstract}
In this article, various aspects of Laplacian spectra of power graphs of finite cyclic, dicyclic and finite $p$-groups are studied. Algebraic connectivity of power graphs of the above groups are considered and determined completely for that of finite $p$-groups. Further, the multiplicity  of Laplacian spectral radius of power graphs of the above groups are studied and determined completely for that of dicyclic and finite $p$-groups. 
The equality of the vertex connectivity and the algebraic connectivity is characterized for power graphs of all of the above groups. Orders of dicyclic groups, for which their power graphs are Laplacian integral, are determined. Moreover, it is proved that the notion of equality of the vertex connectivity and the algebraic connectivity and the notion of Laplacian integral are equivalent for power graphs of dicyclic groups. All possible values of Laplacian eigenvalues  are obtained for power graphs of finite $p$-groups. This shows that power graphs of finite $p$-groups are Laplacian integral. 
\end{abstract}

\subjclass[2010]{05C50, 05C25}

\keywords{Power graph, finite group, Laplacian spectrum, cyclic group, dicyclic group, $p$-group}

\thanks{
The author is currently a Post-doctoral fellow at National Institute of Science Education and Research, Bhubaneswar, Odisha - 752050, India.}

\maketitle

\section{Introduction}

In literature, there are various graphs constructed from groups and semigroups, e.g., Cayley graphs (cf. \cite{cayley1878desiderata}), intersection graphs (cf. \cite{zelinka1975intersection}), commuting graphs (cf. \cite{MR687893}) and prime graphs (cf. \cite{MR617092}). The notion of \emph{directed power graph} of a semigroup $S$ was introduced by Kelarev and Quinn \cite{kelarev2000combinatorial} as the directed graph with vertex set $S$ and there is an arc from a vertex $u$ to another vertex $v$ if $v=u^\alpha$ for some $\alpha \in \mathbb{N}$. Followed by this, Chakrabarty et al. \cite{GhoshSensemigroups} defined the {\em power graph} $\mathcal{G}(S)$ of a semigroup $S$ as the (undirected) graph with vertex set $S$ and distinct vertices $u$ and $v$ are adjacent if $v=u^\alpha$ for some $\alpha \in \mathbb{N}$ or $u=v^\beta$ for some $\beta \in \mathbb{N}$. 

In recent years, researchers have studied various properties of power graphs and have shown their usefulness in characterizing finite groups. Cameron and Ghosh \cite{Ghosh} proved that two finite abelian groups with isomorphic power graphs are isomorphic. Cameron \cite{Cameron} proved that if two finite groups have isomorphic power graphs, then they have the same numbers of elements of each order. Curtin and Pourgholi \cite{curtin2014edge} showed that among all finite groups of a given order, the cyclic group of that order has the maximum number of edges. In \cite{hamzeh2017autogroup}, automorphism groups of power graphs of various finite groups were computed. In \cite{ConPowerGr2, ConPowerGr17}, upper bounds of the vertex connectivity along with some cases of equality were found for power graphs of finite cyclic groups. The proper power graph of a group is obtained by deleting the identity element from its power graph. In \cite{bubbo17,doostabadi2015connectivity}, the components of proper power graphs of some finite groups were studied. For more interesting results on power graphs, the reader may refer to \cite{ma2015chromatic, MirzargarPG, MR3200118, MinDegPowerGr17}.

All graphs considered hereafter are undirected, finite and simple (i.e., no loops or parallel edges).

For any graph $\Gamma$ with ordered vertex set $\{v_1, v_2, \ldots, v_n\}$, the \emph{Laplacian matrix} $L(\Gamma)$ of $\Gamma$ is defined as $L(\Gamma) = D(\Gamma) - A(\Gamma)$, where $D(\Gamma)$ is the diagonal matrix whose $(i,i)$th entry is the degree of $v_i$ and $A(\Gamma)$ is the \emph{adjacency matrix} of $\Gamma$ whose $(i,j)$th entry is $1$ if $v_i$ is adjacent to $v_j$ and $0$ otherwise. The matrix $L(\Gamma)$ is symmetric and positive semidefinite, so that its eigenvalues are real and non-negative (cf. \cite{mohar1991laplacian}). Furthermore, the sum of each row (column) of $L(\Gamma)$ is zero, so that it is singular and consequently, its smallest eigenvalue is $0$. The eigenvalues of $L(\Gamma)$ are called the \emph{Laplacian eigenvalues} of $\Gamma$ and are denoted by $\lambda_1(\Gamma) \geq \lambda_2(\Gamma) \geq \ldots \geq \lambda_n(\Gamma)=0$ arranged in non-increasing order. Now let $\lambda_{n_1}(\Gamma) > \lambda_{n_2}(\Gamma) > \ldots > \lambda_{n_r}(\Gamma)=0$ be the distinct Laplacian eigenvalues of $\Gamma$ with multiplicities $m_1, m_2, \ldots, m_r$, respectively. Then the \emph{Laplacian spectrum} of $\Gamma$, that is, the spectrum of $L(\Gamma)$, is represented as
$
\begin{pmatrix}
\lambda_{n_1}(\Gamma) & \lambda_{n_2}(\Gamma) & \cdots &  \lambda_{n_r}(\Gamma) \\
m_1 & m_2 & \cdots & m_r
\end{pmatrix}.
$
 Fiedler \cite{fiedler1973algebraic} termed $\lambda_{n-1}(\Gamma)$ as the \emph{algebraic connectivity} of $\Gamma$, viewing it as a measure of connectivity of $\Gamma$. It is known that  $\lambda_{n-1}(\Gamma) >0$ if and only if $\Gamma$ is connected. The largest Laplacian eigenvalue $\lambda_1(\Gamma)$ is called the \emph{Laplacian spectral radius} of $\Gamma$. A graph is \emph{Laplacian integral} if all its Laplacian eigenvalues are integers.  More results on Laplacian spectra of graphs can be found in the text \cite{cvetkovic2010spectra}.

Recently, researchers have studied various spectral properties of power graphs of finite groups. Chattopadhyay and Panigrahi \cite{chattopadhyay2015laplacian} investigated Laplacian spectra of power graphs of the finite cyclic group $\mathbb{Z}_n$ and the dihedral group $D_n$ (of order $2n$). They showed that the Laplacian spectral radius of the power graph of any finite group $G$ is $|G|$.  Moreover, they expressed the Laplacian characteristic polynomial of $\mathcal{G}(D_n)$ in terms of that of $\mathcal{G}(\mathbb{Z}_n)$. In \cite{sriparna.thesis}, the Laplacian spectrum of power graph of the dicyclic group $Q_n$ (of order $4n$), when $n$ is a power of $2$, was computed. In \cite{power2018radius,Spectra17},  adjacency spectra of power graphs of certain finite groups were studied. Kirkland et al. \cite{MR1873608} supplied an equivalent condition for the equality of vertex connectivity and algebraic connectivity of non-complete and connected graphs in terms of graph join operation. 
   
 In this article, we study Laplacian spectra of power graphs of $\mathbb{Z}_n$, $Q_n$ and  finite $p$-groups.
 For $\mathcal{G}(\mathbb{Z}_n)$, we determine all $n$ for which its algebraic connectivity is $\phi(n)+1$ and all $n$ for which the multiplicity of its Laplacian spectral radius is $\phi(n)+1$. 
  We provide bounds for the algebraic connectivity of $\mathcal{G}(Q_n)$ and find all $n$ for which it is equal to $2$. Moreover, we obtain the multiplicity of the Laplacian spectral radius of $\mathcal{G}(Q_n)$ for all $n$.
   For a finite $p$-group $G$, we show that the statements: (i) the algebraic connectivity of $\mathcal{G}(G)$ is $1$, (ii) the multiplicity of the Laplacian spectral radius of $\mathcal{G}(G)$ is one, and (iii) $G$ is neither cyclic nor generalized quaternion, are all equivalent.
This, when taken together with known results, determines the algebraic connectivity and the multiplicity of the Laplacian spectral radius of $\mathcal{G}(G)$ completely.   
   
     We characterize the equality of the vertex connectivity and the algebraic connectivity for power graphs of $\mathbb{Z}_n$, $Q_n$ and  finite $p$-groups. 
     Furthermore, we determine all $n$ such that $\mathcal{G}(Q_n)$ is Laplacian integral.
      In fact, we prove that the statements: (i) the vertex connectivity and the algebraic connectivity of $\mathcal{G}(Q_n)$ are equal, (ii) the algebraic connectivity of $\mathcal{G}(Q_n)$ is an integer, (iii) $\mathcal{G}(Q_n)$ is Laplacian integral, and (iv) $Q_n$ is generalized quaternion, are all equivalent. When $G$ is a finite $p$-group, we provide iterative methods to find the structure and the Laplacian characteristic polynomial of $\mathcal{G}(G)$. Then we determine all possible values of Laplacian eigenvalues of $\mathcal{G}(G)$, and conclude that power graphs of finite $p$-groups are always Laplacian integral.

\section{Preliminaries}
\label{prelim}

In this section, we state the relevant definitions and recall the necessary results from literature. Additionally, we also fix some notations.

For any graph $\Gamma$, its set of vertices and set of edges are denoted by $V(\Gamma)$ and $E(\Gamma)$, respectively. A graph with just one vertex (hence no edges) is called a \emph{trivial graph}. 
The \emph{complement} $\overline{\Gamma}$ of $\Gamma$ is the graph with vertex set $V(\Gamma)$ and two (distinct) vertices are adjacent if they are non-adjacent in $\Gamma$.
 The \emph{vertex connectivity} $\kappa({\Gamma})$ of $\Gamma$ is the minimum number of vertices whose removal makes $\Gamma$ disconnected or a trivial graph.
  Analogously, the \emph{edge connectivity} $\kappa'({\Gamma})$ of $\Gamma$ is the minimum number of edges whose removal makes $\Gamma$ disconnected or a trivial graph.
   By convention, vertex connectivity and edge connectivity of disconnected graphs or the trivial graph are taken to be $0$.
    The \emph{minimum degree} $\delta(\Gamma)$ of $\Gamma$ is the minimum of degrees of all vertices of $\Gamma$.
     Up to isomorphism, the complete graph on $n$ vertices is denoted by $K_n$. 

In this article, $e$ always denotes the identity element of a (multiplicative) group. Let $G$ be a group and $g \in G$. The order of $g$ in $G$ is denoted by $\mathrm{o}(g)$ and the cyclic subgroup generated by $g$ is denoted by $\langle g \rangle$.
  Let $\approx$ denote the equivalence relation on $G$ defined by $g \approx h$ if $\langle g \rangle = \langle h \rangle$. An equivalence class under $\approx$ is referred to as a $\approx$-class and the $\approx$-class of any $g \in G$ is denoted by $[g]$. For $A \subseteq G$, the subgraph of $\mathcal{G}(G)$ induced by $A$ is denoted by $\mathcal{G}_G(A)$ or simply $\mathcal{G}(A)$. We denote $G^*=G-e$ and $\mathcal{G}^*(G)=\mathcal{G}(G)-e$.
  
   In \cite{GhoshSensemigroups}, Chakrabarty et al. proved that for any group $G$, $\mathcal{G}(G)$ is connected if and only if all elements of $G$ have finite order. Moreover, they determined all finite groups whose power graphs are complete.

\begin{theorem}[{\cite[Theorem 2.12]{GhoshSensemigroups}}]\label{CompleteCond}
	Let $G$ be a finite group. Then $\mathcal{G}(G)$ is complete if and only if $G$ is a cyclic group of order one or $p^\alpha$ for some prime $p$ and $\alpha \in \mathbb{N}$.
\end{theorem}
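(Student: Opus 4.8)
The plan is to prove both directions by directly unpacking the definition of the power graph: $\mathcal{G}(G)$ is complete precisely when, for every pair of distinct elements $u,v \in G$, one of them is a power of the other. The whole argument reduces to controlling this ``is a power of'' relation.

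For the ``if'' direction, suppose $G$ is trivial or cyclic of order $p^\alpha$. The trivial case is immediate, since $K_1$ is vacuously complete. When $G = \langle g \rangle$ with $\mathrm{o}(g) = p^\alpha$, I would invoke the standard fact that the subgroups of a cyclic $p$-group form a chain under inclusion. Concretely, for any distinct $u,v \in G$ the cyclic subgroups $\langle u \rangle$ and $\langle v \rangle$ have orders that are powers of $p$, and since $G$ has a unique subgroup of each such order, one of $\langle u \rangle, \langle v \rangle$ contains the other. If $\langle u \rangle \subseteq \langle v \rangle$, then $u \in \langle v \rangle$, so $u = v^k$ for some $k$, making $u$ and $v$ adjacent; the other case is symmetric. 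Hence every pair of vertices is adjacent and $\mathcal{G}(G)$ is complete.

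For the ``only if'' direction, assume $\mathcal{G}(G)$ is complete and first show $G$ is cyclic. I would pick an element $g$ of maximal order $m$ in $G$ and argue that $G = \langle g \rangle$: for any $h \in G$, completeness gives either $h \in \langle g \rangle$ directly, or $g = h^j$ for some $j$, in which case $\langle g \rangle \subseteq \langle h \rangle$ forces $m = \mathrm{o}(g) \le \mathrm{o}(h) \le m$ by maximality, so $\langle h \rangle = \langle g \rangle$ and again $h \in \langle g \rangle$. Thus $G = \langle g \rangle$ is cyclic. Writing $n = |G|$, it remains to show $n$ is $1$ or a prime power. If instead $n$ had two distinct prime divisors $p \ne q$, then $G$ would contain an element $a$ of order $p$ and an element $b$ of order $q$; neither can be a power of the other, since $a = b^k$ would give $p \mid q$ and $b = a^j$ would give $q \mid p$, both impossible. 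This non-adjacent pair contradicts completeness, so $n = 1$ or $n = p^\alpha$.

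The argument is largely careful bookkeeping of the ``is a power of'' relation, so no single step is technically deep. The step that most deserves care is the maximal-order argument in the ``only if'' direction, where one must rule out the possibility that the chosen $g$ is itself a proper power of some other element $h$; the resolution is precisely the maximality of $\mathrm{o}(g)$, which collapses that case back to $h \in \langle g \rangle$ and thereby pins down $G$ as cyclic before the prime-divisor count finishes the proof.
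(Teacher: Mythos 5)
Your proof is correct. Note, however, that the paper does not prove this statement at all: it is quoted as a known result, namely Theorem 2.12 of Chakrabarty--Ghosh--Sen (\emph{Undirected power graphs of semigroups}), so there is no in-paper argument to compare against. Your argument --- the chain of subgroups of a cyclic $p$-group for sufficiency, and for necessity the maximal-order element forcing $G$ to be cyclic followed by the observation that elements of distinct prime orders are non-adjacent --- is the standard proof of this fact, and each step (including the delicate case where $g=h^j$, which you correctly collapse via maximality of $\mathrm{o}(g)$) is sound.
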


For $n \in \mathbb{N}$, the additive group of integers modulo $n$ is denoted by $\mathbb{Z}_n = \{\overline{0},\overline{1}, \ldots,$ $\overline{n-1}\}$. We denote the set consisting of the identity element and the generators of $\mathbb{Z}_n$ by $\breve{\mathbb{Z}}_n$, i.e., $\breve{\mathbb{Z}}_n= \{\overline{0} \}  \cup  \left \{\overline{a}\in \mathbb{Z}_n :1 \leq a<n, \gcd(a, n)=1 \right \}$. Further, we denote $\mathbb{Z}'_n=\mathbb{Z}_n-\breve{\mathbb{Z}}_n$ and $\mathcal{G}'(\mathbb{Z}_n)=\mathcal{G}(\mathbb{Z}_n)-\breve{\mathbb{Z}}_n$. Notice that each vertex in $\breve{\mathbb{Z}}_n$ is adjacent to every other vertex of $\mathcal{G}(\mathbb{Z}_n)$.  

We require the following results on power graphs in \Cref{main.section}.

\begin{lemma}[{\cite[Proposition 2.5]{ConPowerGr17}}]\label{SepSetLemma}
	Let $n \geq 2$ be a composite number. Then $\mathcal{G}'(\mathbb{Z}_n)$ is disconnected if and only if $n$ is a product of two distinct primes.
\end{lemma}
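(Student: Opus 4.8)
The plan is to translate the problem into the divisibility structure of element orders and then into a graph on the divisors of $n$. First I would record the adjacency rule in $\mathcal{G}(\mathbb{Z}_n)$. Since $\mathbb{Z}_n$ is cyclic it has a unique subgroup of each order dividing $n$, and $\langle \overline{a}\rangle \subseteq \langle \overline{b}\rangle$ holds exactly when $\mathrm{o}(\overline{a}) \mid \mathrm{o}(\overline{b})$. As $\overline{a} \in \langle \overline{b}\rangle$ is equivalent to $\langle \overline{a}\rangle \subseteq \langle \overline{b}\rangle$, two vertices $\overline{a}, \overline{b}$ are adjacent in $\mathcal{G}(\mathbb{Z}_n)$ if and only if $\mathrm{o}(\overline{a}) \mid \mathrm{o}(\overline{b})$ or $\mathrm{o}(\overline{b}) \mid \mathrm{o}(\overline{a})$. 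The vertex set $\mathbb{Z}'_n$ of $\mathcal{G}'(\mathbb{Z}_n)$ consists precisely of the elements whose order is a proper nontrivial divisor $d$ of $n$ (that is, $1 < d < n$), and there are $\phi(d) \geq 1$ of them for each such $d$.

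Two observations now organize the argument: the elements of any fixed order form a clique, and an element of order $d_1$ is adjacent to an element of order $d_2$ if and only if $d_1 \mid d_2$ or $d_2 \mid d_1$. Consequently $\mathcal{G}'(\mathbb{Z}_n)$ is connected if and only if the \emph{comparability graph} $H$ on the set of proper nontrivial divisors of $n$, in which two divisors are joined whenever one divides the other, is connected. The key reduction is that every proper nontrivial divisor $d$ has a prime factor $p$, and since $n$ is composite this $p$ satisfies $1 < p < n$ (as $p \mid n$ and $n \neq p$), so $p$ is itself a vertex of $H$ adjacent to $d$. Therefore $H$ is connected if and only if all the prime divisors of $n$ lie in one component of $H$.

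Given this reduction both directions are short. If $n = pq$ with $p \ne q$ prime, the only proper nontrivial divisors are $p$ and $q$, which are incomparable, so $H$ is disconnected and hence so is $\mathcal{G}'(\mathbb{Z}_n)$. For the converse, assume $n$ is composite but not a product of two distinct primes. If $n = p^k$ with $k \geq 2$ then $p$ is the unique prime divisor and $H$ is connected by the reduction. Otherwise $n$ has two distinct prime divisors $p, q$; here the point is that $pq \mid n$, while $pq = n$ is impossible because it would exhibit $n$ as a product of two distinct primes, so $pq$ is a proper nontrivial divisor adjacent in $H$ to both $p$ and $q$. Applying this to every pair of prime divisors places them all in a single component, so $H$ is connected.

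The conceptual heart---and the step I would be most careful about---is the adjacency criterion together with the reduction to $H$: once adjacency is phrased purely in terms of divisibility of orders and one notices that every divisor attaches to a prime divisor, connectivity collapses to whether the prime divisors can be linked. The final case analysis is then immediate from the clean observation that $pq = n$ is exactly the excluded configuration; nonemptiness of each order class (including singleton classes such as the lone vertex of $\mathcal{G}'(\mathbb{Z}_4)$) requires only $\phi(d) \geq 1$ and causes no trouble.
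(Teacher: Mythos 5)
Your proof is correct, but there is nothing in the paper to compare it against: the paper states this lemma as an imported result, quoted from \cite[Proposition 2.5]{ConPowerGr17}, and gives no proof of it. So your argument should be judged on its own, and it stands up. The reduction to the comparability graph $H$ on the proper nontrivial divisors of $n$ is valid for exactly the reasons you give: each order class is nonempty (since $\phi(d)\geq 1$), each class is a clique, and adjacency between two classes depends only on whether one order divides the other, so paths in $\mathcal{G}'(\mathbb{Z}_n)$ project to walks in $H$ and walks in $H$ lift to paths in $\mathcal{G}'(\mathbb{Z}_n)$. The observation that every vertex of $H$ is adjacent (or equal) to a prime vertex correctly collapses the question to whether the prime divisors of $n$ lie in one component, and the dichotomy is then clean: for $n=pq$ the graph $H$ consists of two isolated vertices, while otherwise any two distinct prime divisors $p,q$ are joined through the vertex $pq$, which lies in $H$ precisely because $pq\neq n$. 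You also correctly note the degenerate case $n=p^2$ (e.g.\ $n=4$), where $\mathcal{G}'(\mathbb{Z}_n)$ is a single vertex and hence connected. This is a complete, elementary, self-contained proof of the cited result.
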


\begin{lemma}[{\cite[Theorem 7]{ChattopadhyayConnectivity}}] \label{d.ver.con}
	For any integer $n \geq 2$, $\kappa(\mathcal{G}(Q_n))=2$.
\end{lemma}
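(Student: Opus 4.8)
The plan is to exploit the presence of two \emph{universal} (dominating) vertices in $\mathcal{G}(Q_n)$. Recall that the dicyclic group has the presentation $Q_n = \langle a, b \mid a^{2n} = e,\ b^2 = a^n,\ b^{-1}ab = a^{-1}\rangle$, of order $4n$. Its cyclic subgroup $\langle a\rangle$ has order $2n$, and a direct computation using $ba^i = a^{-i}b$ gives $(a^i b)^2 = a^n$ for every $i$; hence each of the $2n$ elements lying outside $\langle a\rangle$ has order $4$ and squares to the unique involution $a^n$. The first step is to observe that both $e$ and $a^n$ are adjacent to every other vertex: $e$ is a power of every element of finite order, while $a^n$ is a power of $a$ (so adjacent to all of $\langle a\rangle$) and equals $x^2$ for every order-$4$ element $x$ outside $\langle a\rangle$ (so adjacent to those as well).

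For the upper bound $\kappa(\mathcal{G}(Q_n)) \le 2$, I would delete the two vertices $e$ and $a^n$ and show that the remaining graph is disconnected. The key structural fact is that two distinct order-$4$ elements $x, y$ outside $\langle a\rangle$ are adjacent only when they generate the same cyclic group $\langle x\rangle = \{e, x, a^n, x^{-1}\}$, that is, when $y = x^{-1}$; and such an element is joined to $\langle a\rangle$ only through the common powers $e$ and $a^n$. Consequently, once $e$ and $a^n$ are removed, the $2n$ order-$4$ elements break into $n$ isolated edges $\{x, x^{-1}\}$, severed from the induced subgraph on $\langle a\rangle \setminus \{e, a^n\}$. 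For $n \ge 2$ this leaves at least two components, so $\{e, a^n\}$ is a vertex cut.

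For the lower bound $\kappa(\mathcal{G}(Q_n)) \ge 2$, I would argue that no single vertex is a cut vertex. If a vertex $v \ne e$ is removed, then $e$ survives and remains adjacent to all remaining vertices, so the graph stays connected; if $e$ itself is removed, then $a^n$ survives and plays the same role. Since $\mathcal{G}(Q_n)$ is non-complete for $n \ge 2$ (for instance, two order-$4$ elements generating distinct subgroups are non-adjacent), its vertex connectivity is a genuine positive integer, and the above shows it is at least $2$. Combining the two bounds yields $\kappa(\mathcal{G}(Q_n)) = 2$.

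The main obstacle is the careful verification of the adjacency structure among the order-$4$ elements — specifically, the claim that two such elements are adjacent precisely when they are mutual inverses, and that they meet $\langle a\rangle$ only at $e$ and $a^n$. This rests on the fact that adjacency in a power graph between two elements of equal order forces them to generate the same cyclic subgroup, which must be checked against the multiplication rules of $Q_n$. Everything else — locating the two universal vertices and counting the resulting components — is then routine.
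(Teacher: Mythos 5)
Your proof contains a genuine gap: the claim that $a^n$ is adjacent to every other vertex of $\mathcal{G}(Q_n)$ is false in general. It holds precisely when $n$ is a power of $2$ --- this is exactly the content of Proposition~3.10 of the paper (\Cref{d.adj.all}), whose proof exhibits the counterexample: if $n$ has an odd prime factor $p$, then $a^{2n/p}$ has order $p$, which is coprime to $\mathrm{o}(a^n)=2$, so $a^n$ and $a^{2n/p}$ are not adjacent (concretely, for $n=3$ the vertices $a^3$ and $a^2$ are non-adjacent). Your justification ``$a^n$ is a power of $a$, so adjacent to all of $\langle a\rangle$'' is a non sequitur: being a power of $a$ makes $a^n$ adjacent to $a$ itself, but adjacency to an arbitrary $a^j$ requires $a^n \in \langle a^j\rangle$ or $a^j \in \langle a^n\rangle = \{e,a^n\}$, which fails whenever $\gcd(j,2n) \nmid n$. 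This error invalidates the lower-bound argument in the case where $e$ is the deleted vertex: there you need $\mathcal{G}^*(Q_n)=\mathcal{G}(Q_n)-e$ to be connected, and you derive this from $a^n$ being universal in it, which is false for all $n$ that are not powers of $2$.

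The gap is repairable, and the rest of your argument stands. For the lower bound, replace $a^n$ by $a$ as the ``hub'' of $\mathcal{G}^*(Q_n)$: every vertex of $\langle a\rangle - \{e\}$ is a power of $a$ and hence adjacent to $a$; every vertex outside $\langle a\rangle$ is of the form $a^ib$ with $(a^ib)^2=a^n$, hence adjacent to $a^n$, which in turn is adjacent to $a$. So $\mathcal{G}^*(Q_n)$ is connected and $e$ is not a cut vertex; removing any single vertex other than $e$ leaves the universal vertex $e$ in place, so $\kappa(\mathcal{G}(Q_n)) \geq 2$. Your upper bound is unaffected by the error: it only uses the correctly verified facts that each $a^ib$ is adjacent, inside $\mathcal{G}(Q_n)$, exactly to $e$, $a^n$ and its inverse $a^{n+i}b$, so deleting $\{e,a^n\}$ leaves the $n \geq 2$ isolated edges $\{a^ib, a^{n+i}b\}$ separated from $\langle a\rangle - \{e,a^n\}$. (For reference, the paper does not prove this lemma at all; it is imported from the literature as \cite[Theorem 7]{ChattopadhyayConnectivity}, so a self-contained argument along your corrected lines is a reasonable substitute.)
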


\begin{lemma}[{\cite[Corollary 4.2]{MR3200118}}]\label{pgroup.connected}
	For any finite $p$-group $G$, $\mathcal{G}^*(G)$ is connected if and only if $G$ is either cyclic or generalized quaternion.
\end{lemma}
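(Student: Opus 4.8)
The plan is to reduce the connectivity of $\mathcal{G}^*(G)$ to a purely group-theoretic count, namely the number of subgroups of order $p$ in $G$, and then to invoke the classical description of $p$-groups possessing a unique such subgroup. The starting point is that every non-identity element $g$ of a finite $p$-group satisfies $\mathrm{o}(g) = p^k$ for some $k \geq 1$, so that $\langle g \rangle$, being cyclic of prime-power order, contains exactly one subgroup of order $p$. I will denote this subgroup by $S(g)$ and record that $S(g) = \langle g^{p^{k-1}} \rangle$, and that $g$ is adjacent in $\mathcal{G}(G)$ to the generator $g^{p^{k-1}}$ of $S(g)$, since the latter is a power of $g$ (when $k = 1$ these coincide and $g$ itself generates $S(g)$).

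First I would show that the assignment $g \mapsto S(g)$ is constant along every edge of $\mathcal{G}^*(G)$. Indeed, if $g$ and $h$ are adjacent, then one of $\langle g \rangle, \langle h \rangle$ contains the other, say $\langle h \rangle \subseteq \langle g \rangle$; since $\langle g \rangle$ has a \emph{unique} subgroup of order $p$, the unique subgroup of order $p$ inside $\langle h \rangle$ must coincide with it, whence $S(h) = S(g)$. It follows immediately that $S$ is constant on each connected component of $\mathcal{G}^*(G)$.

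Next I would establish the converse, that $S(g) = S(h)$ forces $g$ and $h$ into the same component. If $S(g) = S(h) = S$ and $\mathrm{o}(h) = p^l$, then $g^{p^{k-1}}$ and $h^{p^{l-1}}$ are both non-identity elements of the order-$p$ group $S$, hence both generate $S$ and are therefore adjacent to one another (each is a power of the other). This produces a path $g - g^{p^{k-1}} - h^{p^{l-1}} - h$ in $\mathcal{G}^*(G)$, so $g$ and $h$ are connected. Combining the two directions, the connected components of $\mathcal{G}^*(G)$ are exactly the fibers of the map $S$, and consequently their number equals the number of subgroups of order $p$ in $G$. In particular, $\mathcal{G}^*(G)$ is connected if and only if $G$ has a unique subgroup of order $p$.

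It then remains only to identify the finite $p$-groups with a unique subgroup of order $p$. Here I would appeal to the classical theorem of finite group theory asserting that a finite $p$-group has exactly one subgroup of order $p$ precisely when it is cyclic or, for $p = 2$, generalized quaternion; the reverse implications are routine, as cyclic $p$-groups have a unique subgroup of each order dividing $|G|$, and generalized quaternion groups have a unique involution. I expect this classification to be the main obstacle, in the sense that it is the one genuinely deep input, whereas the remainder is an elementary translation between the combinatorics of the power graph and the subgroup lattice. I would cite this group-theoretic fact rather than reprove it, and the lemma follows at once.
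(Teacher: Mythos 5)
Your proof is correct, but there is nothing in the paper to compare it against: the paper does not prove this lemma, it simply quotes it as Corollary 4.2 of \cite{MR3200118} and uses it as a black box. Your argument is a sound, self-contained derivation of that quoted result. The key reduction --- showing that $g \mapsto S(g)$, the unique subgroup of order $p$ of $\langle g \rangle$, is constant along edges of $\mathcal{G}^*(G)$, and conversely that any two elements with the same image are joined by the walk $g,\ g^{p^{k-1}},\ h^{p^{l-1}},\ h$ --- correctly identifies the components of $\mathcal{G}^*(G)$ with the subgroups of order $p$ in $G$, so that connectedness is equivalent to $G$ having a unique subgroup of order $p$; the classical classification of such $p$-groups (cyclic, or generalized quaternion when $p=2$) then finishes the proof, and it is entirely appropriate to cite that theorem rather than reprove it. This skeleton is consistent with the facts the paper does import elsewhere: in the proof of \Cref{p.cy.gq} it uses, from \cite{ConPowerGr17}, that every component of $\mathcal{G}^*(G)$ contains exactly $p-1$ elements of order $p$, which is exactly your fiber description restated. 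Two minor points of hygiene, neither a gap: when $g^{p^{k-1}} = h^{p^{l-1}}$ (or when $k=1$) your ``path'' degenerates, so it should be called a walk, with repeated vertices discarded; and adjacency in $\mathcal{G}(G)$ is defined only for distinct vertices, so the phrase ``each is a power of the other'' should be read as ``equal or adjacent.''
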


 The isomorphism of graphs and groups is denoted by $\cong$. For $n \in \mathbb{N}$, the number of positive integers that do not exceed $n$ and are relatively prime to $n$ is denoted by $\phi(n)$, and the function $\phi$ is known as \emph{Euler's phi function}. We say that $n$ is a \emph{prime power} if $n=p^\alpha$ for some prime $p$ and $\alpha \in \mathbb{N}$.

For any graph $\Gamma$, the characteristic polynomial $\det(xI - L(\Gamma))$ of $L(\Gamma)$ is called the \emph{Laplacian characteristic polynomial} of $\Gamma$ and is denoted by $\Theta(\Gamma, x)$. If $\Gamma$ is a null graph, for both convenience and consistency, we write $\Theta(\Gamma, x)=1$.

Now we recall some necessary results on Laplacian spectra of graphs.

\begin{theorem}[{\cite[Theorem 7.1.2]{cvetkovic2010spectra}}]\label{Eigen_Compo}
For any graph $\Gamma$, the multiplicity of $0$ as an eigenvalue of $L(\Gamma)$ is equal to the number of components of $\Gamma$.
\end{theorem}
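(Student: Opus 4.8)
The plan is to exploit the positive semidefiniteness of $L(\Gamma)$ together with the quadratic form it induces. First I would establish, for any vector $x = (x_1, \ldots, x_n)^T \in \mathbb{R}^n$ indexed by the ordered vertex set $\{v_1, \ldots, v_n\}$, the identity
\[
x^T L(\Gamma) x = \sum_{\{v_i, v_j\} \in E(\Gamma)} (x_i - x_j)^2.
\]
This follows by expanding $x^T(D(\Gamma) - A(\Gamma))x$ directly: the diagonal part contributes $\sum_i \deg(v_i)\, x_i^2$ and the adjacency part contributes $-2\sum_{\{v_i,v_j\} \in E(\Gamma)} x_i x_j$, and collecting these contributions edge by edge yields the displayed sum of squared differences.

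Next, since $L(\Gamma)$ is symmetric, the multiplicity of $0$ as an eigenvalue equals the dimension of its null space $\ker L(\Gamma)$. For a symmetric positive semidefinite matrix this null space coincides with $\{x : x^T L(\Gamma) x = 0\}$, so I would characterize the latter set using the identity above. Indeed, $x^T L(\Gamma) x = 0$ forces every summand $(x_i - x_j)^2$ to vanish, hence $x_i = x_j$ whenever $v_i$ and $v_j$ are adjacent; conversely, any $x$ that is constant on adjacent vertices clearly lies in the kernel. Thus $\ker L(\Gamma)$ is exactly the space of vectors that take equal values on adjacent vertices.

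I would then observe that the relation ``equal on adjacent vertices'' propagates along paths, so any $x \in \ker L(\Gamma)$ is constant on each connected component of $\Gamma$; conversely, assigning an arbitrary constant to each component produces a vector in the kernel. If $\Gamma$ has $c$ components, the indicator vectors of the components form a linearly independent spanning set for $\ker L(\Gamma)$, whence $\dim \ker L(\Gamma) = c$. Therefore the multiplicity of $0$ as an eigenvalue of $L(\Gamma)$ equals the number of components of $\Gamma$.

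The only delicate point is the passage from $x^T L(\Gamma) x = 0$ to $L(\Gamma) x = 0$, rather than merely the orthogonality of $x$ and $L(\Gamma)x$; this is precisely where positive semidefiniteness is essential. I would make it rigorous either by diagonalizing $L(\Gamma)$ via the spectral theorem, or, more concretely, by writing $L(\Gamma) = B B^T$ for the oriented incidence matrix $B$ of $\Gamma$, so that $x^T L(\Gamma) x = \|B^T x\|^2$ and the vanishing of the quadratic form gives $B^T x = 0$, hence $L(\Gamma) x = B B^T x = 0$.
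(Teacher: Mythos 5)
Your proof is correct and complete: the quadratic form identity, the reduction of eigenvalue multiplicity to kernel dimension via symmetry, the characterization of the kernel as vectors constant on components, and the counting of indicator vectors are all handled properly, and you correctly flag and close the one delicate point (passing from $x^T L(\Gamma) x = 0$ to $L(\Gamma)x = 0$) via the factorization $L(\Gamma) = BB^T$. Note that the paper itself gives no proof of this statement --- it is quoted as Theorem 7.1.2 of the cited textbook of Cvetkovi\'c, Rowlinson and Simi\'c --- and your argument is precisely the standard one found there, so there is nothing substantive to compare beyond saying your write-up would serve as a valid self-contained replacement for the citation.
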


A particular case of \Cref{Eigen_Compo} is the following result due to Fiedler.

\begin{theorem}[{\cite{fiedler1973algebraic}}]\label{algcon.positive}
	A graph $\Gamma$ is connected if and only if $\lambda_{n-1}(\Gamma) > 0$.
\end{theorem}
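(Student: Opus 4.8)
The plan is to derive this statement directly from \Cref{Eigen_Compo}, since it is precisely the specialization of that result to the question of whether $\Gamma$ has a single connected component. The key observation is that, under the fixed non-increasing ordering $\lambda_1(\Gamma) \geq \lambda_2(\Gamma) \geq \cdots \geq \lambda_n(\Gamma) = 0$, the eigenvalue $0$ always occupies the last slot (recall $L(\Gamma)$ is positive semidefinite and singular). Hence its multiplicity as an eigenvalue of $L(\Gamma)$ exceeds $1$ exactly when the second-smallest eigenvalue $\lambda_{n-1}(\Gamma)$ is also $0$; equivalently, $0$ is a simple eigenvalue if and only if $\lambda_{n-1}(\Gamma) > 0$.

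First I would invoke \Cref{Eigen_Compo} to identify the multiplicity of $0$ as an eigenvalue of $L(\Gamma)$ with the number $c$ of connected components of $\Gamma$. A graph is connected exactly when $c = 1$, that is, when $0$ is a simple eigenvalue. Combining this with the observation above yields both directions at once: if $\Gamma$ is connected then $c = 1$, which forces $\lambda_{n-1}(\Gamma) > 0$; conversely, if $\lambda_{n-1}(\Gamma) > 0$ then $0$ has multiplicity exactly $1$, so $c = 1$ and $\Gamma$ is connected.

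Since the whole argument is a one-line translation between the phrases ``multiplicity of the zero eigenvalue'' and ``positivity of $\lambda_{n-1}(\Gamma)$'', I do not expect any genuine obstacle. The only point requiring care is to make explicit that the chosen non-increasing ordering of the Laplacian eigenvalues guarantees $\lambda_n(\Gamma) = 0$, so that the second-smallest eigenvalue $\lambda_{n-1}(\Gamma)$ is exactly the quantity whose positivity detects simplicity of the zero eigenvalue. With that in place, the result is an immediate corollary of \Cref{Eigen_Compo}.
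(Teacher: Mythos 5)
Your proof is correct and takes essentially the same approach as the paper, which introduces this result precisely as ``a particular case of \Cref{Eigen_Compo}'' due to Fiedler. Your write-up simply makes explicit the routine translation between simplicity of the zero eigenvalue and positivity of $\lambda_{n-1}(\Gamma)$ (using that $L(\Gamma)$ is positive semidefinite and singular, so $\lambda_n(\Gamma)=0$), which the paper leaves implicit.
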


\begin{theorem}[{\cite[Theorem 3.6]{mohar1991laplacian}}]\label{Eigen_Complemmaent}
For any graph $\Gamma$ on $n$ vertices, $\lambda_n \left( \overline{\Gamma} \right)=0$, and $\lambda_{k}  \left( \overline{\Gamma} \right)=n - \lambda_{n-k}(\Gamma)$ for $1 \leq k \leq n-1$.
\end{theorem}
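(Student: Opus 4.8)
The plan is to relate the Laplacian matrices of $\Gamma$ and $\overline{\Gamma}$ through a single matrix identity and then transfer the spectral information across. First I would record the basic structural facts: since $\overline{\Gamma}$ has an edge exactly where $\Gamma$ does not, the adjacency matrices satisfy $A(\overline{\Gamma}) = J - I - A(\Gamma)$, where $J$ is the all-ones matrix and $I$ the identity; and each vertex has degree $(n-1)$ minus its degree in $\Gamma$, so $D(\overline{\Gamma}) = (n-1)I - D(\Gamma)$. Subtracting these and simplifying yields the key identity
\[
L(\Gamma) + L(\overline{\Gamma}) = nI - J .
\]

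Next I would exploit the common eigenvector $\mathbf{1}$, the all-ones vector. Because every row sum of a Laplacian is zero, $\mathbf{1}$ is an eigenvector of both $L(\Gamma)$ and $L(\overline{\Gamma})$ with eigenvalue $0$; this immediately gives $\lambda_n(\overline{\Gamma}) = 0$. By the spectral theorem, the symmetric matrix $L(\Gamma)$ admits an orthonormal eigenbasis, one member of which is $\mathbf{1}/\sqrt{n}$; the remaining eigenvectors $x_1, \ldots, x_{n-1}$, corresponding to $\lambda_1(\Gamma) \geq \cdots \geq \lambda_{n-1}(\Gamma)$, are orthogonal to $\mathbf{1}$ and hence satisfy $J x_j = 0$. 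Applying the identity above to such an $x_j$ gives
\[
L(\overline{\Gamma})\, x_j = \bigl(nI - J - L(\Gamma)\bigr) x_j = \bigl(n - \lambda_j(\Gamma)\bigr) x_j ,
\]
so each $x_j$ is an eigenvector of $L(\overline{\Gamma})$ with eigenvalue $n - \lambda_j(\Gamma)$.

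Collecting the outputs, the full multiset of Laplacian eigenvalues of $\overline{\Gamma}$ is $\{0\} \cup \{\, n - \lambda_j(\Gamma) : 1 \leq j \leq n-1 \,\}$, and the last step is to sort this in non-increasing order. Since $L(\overline{\Gamma})$ is positive semidefinite, each $n - \lambda_j(\Gamma)$ is nonnegative, so $0$ is the smallest value and occupies position $n$; and because $\lambda_1(\Gamma) \geq \cdots \geq \lambda_{n-1}(\Gamma)$ we have $n - \lambda_1(\Gamma) \leq \cdots \leq n - \lambda_{n-1}(\Gamma)$, so reading these in reverse places $n - \lambda_{n-k}(\Gamma)$ at position $k$. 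This yields $\lambda_k(\overline{\Gamma}) = n - \lambda_{n-k}(\Gamma)$ for $1 \leq k \leq n-1$, as claimed.

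I expect no serious obstacle here; the only point requiring care is the bookkeeping in the final reordering—specifically, confirming that the eigenvalue $0$ attached to $\mathbf{1}$ really is the minimum (which follows from the positive semidefiniteness of $L(\overline{\Gamma})$, equivalently from the standard bound $\lambda_1(\Gamma) \leq n$) so that the index reversal $k \mapsto n-k$ lines up correctly with multiplicities respected.
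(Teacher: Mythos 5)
Your proof is correct, and there is nothing in the paper to compare it against: the paper states this result purely as a citation (Mohar, Theorem 3.6) and gives no proof of its own. Your argument---deriving $L(\Gamma)+L(\overline{\Gamma})=nI-J$, splitting off the common eigenvector $\mathbf{1}$, and transferring the eigenvectors orthogonal to $\mathbf{1}$ with eigenvalue map $\lambda \mapsto n-\lambda$---is the standard proof of this fact, and every step is sound, including the final reordering (positive semidefiniteness of $L(\overline{\Gamma})$ gives $\lambda_1(\Gamma)\leq n$, so $0$ correctly occupies position $n$ even when $n-\lambda_1(\Gamma)=0$ creates a tie).
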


\begin{theorem}[{\cite{fiedler1973algebraic}}]\label{Eigen_Lambda1}
For any graph $\Gamma$, $\lambda_1(\Gamma) =\displaystyle \max_{1 \leq i \leq r} \lambda_1(\Gamma_i)$, where $\Gamma_1, \ldots, \Gamma_r$ are the components of $\Gamma$.
\end{theorem}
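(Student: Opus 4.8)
The plan is to exploit the block structure that the Laplacian matrix inherits from the partition of $\Gamma$ into its connected components $\Gamma_1, \ldots, \Gamma_r$. First I would relabel the vertices of $\Gamma$ so that all vertices lying in a common component receive consecutive indices, the vertices of $\Gamma_1$ first, then those of $\Gamma_2$, and so on. Since such a relabelling amounts to conjugating $L(\Gamma)$ by a permutation matrix, it is a similarity transformation and therefore leaves the Laplacian spectrum, and in particular the quantity $\lambda_1(\Gamma)$, unchanged. Thus I am free to work with $\Gamma$ in this ordered form.

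With this ordering in place, the key step is to observe that $L(\Gamma)$ is block diagonal with diagonal blocks $L(\Gamma_1), \ldots, L(\Gamma_r)$. Indeed, distinct components share no edges, so the $(u,v)$ entry of the adjacency matrix $A(\Gamma)$ vanishes whenever $u$ and $v$ lie in different components; hence $A(\Gamma)$ is block diagonal and its $i$th diagonal block is exactly $A(\Gamma_i)$. Moreover $D(\Gamma)$ is diagonal, and the degree of a vertex of $\Gamma$ coincides with its degree inside the component that contains it, so the $i$th diagonal block of $D(\Gamma)$ is $D(\Gamma_i)$. Consequently $L(\Gamma) = D(\Gamma) - A(\Gamma)$ is the block diagonal matrix with blocks $L(\Gamma_i)$. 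Because the determinant of a block diagonal matrix factors over its blocks, the Laplacian characteristic polynomials satisfy $\Theta(\Gamma, x) = \prod_{i=1}^{r} \Theta(\Gamma_i, x)$. Therefore the multiset of Laplacian eigenvalues of $\Gamma$ is precisely the disjoint union of the multisets of Laplacian eigenvalues of the $\Gamma_i$, and taking the largest element of this union yields $\lambda_1(\Gamma) = \max_{1 \leq i \leq r} \lambda_1(\Gamma_i)$, as claimed.

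I do not anticipate a genuine obstacle here: the argument is entirely structural and uses no estimate. The only points that deserve explicit mention are that a permutation similarity preserves the spectrum, and that the absence of edges between distinct components is immediate from the definition of a connected component. An alternative route would use the variational characterization of $\lambda_1$ via the Rayleigh quotient, restricting test vectors to the support of a single component to obtain one inequality and reading off the optimizer to obtain the other; however, the block diagonal factorization above is more direct and additionally records the stronger fact that the full spectrum of $L(\Gamma)$ splits as the union of the component spectra.
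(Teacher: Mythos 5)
Your proof is correct. Note, however, that the paper gives no proof of this statement at all: it is recalled as a preliminary result and attributed to Fiedler, so there is no internal argument to compare yours against. Your block-diagonal decomposition is the standard proof of this fact, and it is worth pointing out that the intermediate identity you derive, $\Theta(\Gamma,x)=\prod_{i=1}^{r}\Theta(\Gamma_i,x)$, is exactly the content of \Cref{GrLCharProd}, which the paper also records (again without proof, citing Mohar). So within the logical economy of the paper you could simply have invoked \Cref{GrLCharProd} and concluded in one line that the Laplacian spectrum of $\Gamma$ is the multiset union of the spectra of the $\Gamma_i$, whence $\lambda_1(\Gamma)=\max_i\lambda_1(\Gamma_i)$; your argument is, in effect, a self-contained proof of that cited theorem followed by this one-line deduction. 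The argument also yields \Cref{Eigen_Compo} for free (the multiplicity of $0$ equals the number of components, since each connected $\Gamma_i$ contributes the eigenvalue $0$ exactly once), which shows your route is strictly more informative than the statement being proved. All the individual steps you flag — permutation similarity preserving the spectrum, the absence of edges between components, and the determinant of a block-diagonal matrix factoring over its blocks — are sound.
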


\begin{theorem}[{\cite[Theorem 2.2]{mohar1991laplacian}}]\label{Eigen_Equality}
 If $\Gamma$ is a graph with $n$ vertices, then $\lambda_1(\Gamma) \leq n$. Equality holds if and only if $\overline{\Gamma}$ is not connected.
\end{theorem}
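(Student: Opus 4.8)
The plan is to deduce both assertions from the spectral relationship between a graph and its complement recorded in \Cref{Eigen_Complemmaent}, together with Fiedler's characterization of connectedness in \Cref{algcon.positive}. The key observation is that the Laplacian spectral radius $\lambda_1(\Gamma)$ is controlled by the algebraic connectivity $\lambda_{n-1}(\overline{\Gamma})$ of the complement, which is always non-negative and is strictly positive exactly when $\overline{\Gamma}$ is connected. So the entire statement should fall out of a single index substitution plus Fiedler's theorem.

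First I would dispose of the trivial case $n = 1$, where $L(\Gamma) = [0]$, so $\lambda_1(\Gamma) = 0 < 1 = n$ while $\overline{\Gamma}$ (a single vertex) is connected; both the equality and the disconnectedness condition fail, so the equivalence holds vacuously. For $n \geq 2$ I would specialize \Cref{Eigen_Complemmaent} at $k = n-1$ to obtain $\lambda_{n-1}(\overline{\Gamma}) = n - \lambda_{1}(\Gamma)$, equivalently $\lambda_1(\Gamma) = n - \lambda_{n-1}(\overline{\Gamma})$. Since $L(\overline{\Gamma})$ is positive semidefinite, all of its eigenvalues are non-negative, in particular $\lambda_{n-1}(\overline{\Gamma}) \geq 0$; substituting then gives $\lambda_1(\Gamma) \leq n$, the desired bound.

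For the equality statement, the same identity $\lambda_1(\Gamma) = n - \lambda_{n-1}(\overline{\Gamma})$ shows that $\lambda_1(\Gamma) = n$ holds precisely when $\lambda_{n-1}(\overline{\Gamma}) = 0$. By \Cref{algcon.positive} applied to $\overline{\Gamma}$, the quantity $\lambda_{n-1}(\overline{\Gamma})$ is positive if and only if $\overline{\Gamma}$ is connected, hence it vanishes if and only if $\overline{\Gamma}$ is not connected. Chaining these equivalences yields that $\lambda_1(\Gamma) = n$ if and only if $\overline{\Gamma}$ is disconnected, completing the argument.

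An alternative, more self-contained route avoids \Cref{Eigen_Complemmaent} by working directly with the matrix identity $L(\Gamma) + L(\overline{\Gamma}) = L(K_n) = nI - J$, where $J$ is the all-ones matrix. On the hyperplane $\mathbf{1}^{\perp}$ orthogonal to the all-ones vector $\mathbf{1}$ one has $J = 0$, so there $L(\Gamma) = nI - L(\overline{\Gamma})$; reading off the eigenvalues restricted to $\mathbf{1}^{\perp}$ and invoking \Cref{Eigen_Compo} to count the zero eigenvalues of $L(\overline{\Gamma})$ recovers the same conclusion. I expect the only genuinely delicate point in either approach to be the bookkeeping of eigenvalue indices under the complement relation and the clean handling of the $n=1$ boundary case; the substantive content is carried entirely by the complement identity and Fiedler's theorem.
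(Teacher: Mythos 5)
Your proof is correct, but there is nothing in the paper to compare it against: the paper imports this statement without proof, citing Mohar's survey (his Theorem 2.2), exactly as it imports \Cref{Eigen_Complemmaent} and \Cref{algcon.positive}. What your argument really shows is that, within the paper's framework, \Cref{Eigen_Equality} is redundant given the other quoted results: specializing \Cref{Eigen_Complemmaent} at $k=n-1$ gives $\lambda_{n-1}(\overline{\Gamma}) = n - \lambda_1(\Gamma)$, positive semidefiniteness gives the bound $\lambda_1(\Gamma)\leq n$, and \Cref{algcon.positive} applied to $\overline{\Gamma}$ gives the equality case; your index bookkeeping and the $n=1$ boundary case are handled correctly. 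The one point deserving care is logical rather than mathematical: deriving Mohar's Theorem 2.2 from his Theorem 3.6 would be circular if the latter were itself established using the former. Your alternative route resolves exactly this worry: the identity $L(\Gamma)+L(\overline{\Gamma}) = nI - J$, restricted to $\mathbf{1}^{\perp}$, proves the complement relation directly --- the positive semidefiniteness of $L(\overline{\Gamma})$ is what forces $n-\lambda_i(\Gamma)\geq 0$ for $1 \leq i \leq n-1$ and hence the correct placement of the extra zero eigenvalue --- and then \Cref{Eigen_Compo} converts ``$0$ is an eigenvalue of $L(\overline{\Gamma})$ of multiplicity at least two'' into ``$\overline{\Gamma}$ is disconnected.'' That self-contained version is essentially the standard textbook proof, and it is the one to prefer if the goal is a genuine proof of the theorem rather than a reduction to other facts cited from the same source.
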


The \emph{union} of graphs $\Gamma_1$ and $\Gamma_2$, denoted by $\Gamma_1 \cup \Gamma_2$, is the graph  with vertex set $V(\Gamma_1) \cup V(\Gamma_2)$ and edge set $E(\Gamma_1) \cup E(\Gamma_2)$. Evidently, union of graphs is associative, so that union of any finite number of graphs can be defined accordingly. If $\Gamma_1$ and $\Gamma_2$ are \emph{disjoint}, i.e., they have no common vertices, we refer to their union as a \emph{disjoint union}, and denote it by $\Gamma_1 + \Gamma_2$. For pairwise disjoint graphs $\Gamma_1, \Gamma_2, \ldots, \Gamma_r$, we denote their union by $\sum_{i=1}^{r} \Gamma_i$. If $\Gamma_1$ and $\Gamma_2$ are disjoint, their \emph{join} $\Gamma_1 \vee \Gamma_2$ is the graph obtained by taking $\Gamma_1 + \Gamma_2$ and adding all edges $\{u,v\}$ with $u \in V(\Gamma_1)$ and $v \in V(\Gamma_2)$. For any graph $\Gamma$, up to isomorphism, $r\Gamma$ denotes the graph obtained by taking disjoint union of $r$ copies of $\Gamma$.

\begin{theorem}[\cite{mohar1991laplacian}]\label{GrLCharProd}
	If $\Gamma$ is the disjoint union of graphs $\Gamma_1,\Gamma_2,\ldots, \Gamma_r$, then 
	\begin{equation*}\label{EqGrLCharProd}
	\Theta(\Gamma,x)= \prod_{i=1}^r \Theta(\Gamma_i,x).
	\end{equation*}
\end{theorem}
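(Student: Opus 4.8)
The plan is to exploit the block structure of the Laplacian matrix of a disjoint union. First I would fix an ordering of the vertices of $\Gamma$ that lists all vertices of $\Gamma_1$ first, followed by all vertices of $\Gamma_2$, and so on through $\Gamma_r$. Since the $\Gamma_i$ are pairwise disjoint and forming a disjoint union introduces no edges between distinct components, no edge of $\Gamma$ joins a vertex of $\Gamma_i$ to a vertex of $\Gamma_j$ when $i \neq j$. Consequently the adjacency matrix $A(\Gamma)$ is block diagonal, with $i$th diagonal block equal to $A(\Gamma_i)$ and all off-diagonal blocks zero.

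Next I would observe that the degree in $\Gamma$ of any vertex $v$ lying in $\Gamma_i$ equals its degree within $\Gamma_i$, again because $v$ has no neighbours outside $\Gamma_i$. Hence the diagonal degree matrix $D(\Gamma)$ is likewise block diagonal, with blocks $D(\Gamma_1), \ldots, D(\Gamma_r)$. Subtracting, $L(\Gamma) = D(\Gamma) - A(\Gamma)$ is block diagonal with $i$th block $D(\Gamma_i) - A(\Gamma_i) = L(\Gamma_i)$. Therefore $xI - L(\Gamma)$ is block diagonal with diagonal blocks $xI - L(\Gamma_i)$, the sizes matching those of the $L(\Gamma_i)$.

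Finally I would invoke the multiplicativity of the determinant for block diagonal (equivalently, block triangular) matrices: the determinant of a block diagonal matrix is the product of the determinants of its diagonal blocks. This yields
\[
\Theta(\Gamma, x) = \det\big(xI - L(\Gamma)\big) = \prod_{i=1}^r \det\big(xI - L(\Gamma_i)\big) = \prod_{i=1}^r \Theta(\Gamma_i, x),
\]
as required. One could instead phrase the determinant step as an induction on $r$, reducing to the two-block case, but the block-diagonal determinant identity makes this unnecessary.

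As for obstacles, there is essentially no hard step. The only point demanding a moment of care is the claim that the degree of a vertex in $\Gamma$ coincides with its degree in its own component, which is exactly where the disjointness hypothesis (the absence of cross-edges) is used; once that is in hand, the conclusion follows immediately from standard linear algebra.
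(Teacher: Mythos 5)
Your proof is correct and complete: the block-diagonal structure of $L(\Gamma)$ under a component-wise vertex ordering, together with the multiplicativity of the determinant over diagonal blocks, is exactly the standard argument for this fact. The paper itself states this result as a citation to Mohar's survey and gives no proof, so there is nothing to contrast with; your argument is the canonical one and handles the only delicate point (degrees are computed within each $\Gamma_i$ because there are no cross-edges) explicitly.
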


\begin{theorem}[\cite{mohar1991laplacian}]\label{GrLCharJoin}
	If $\Gamma_1$ and $\Gamma_2$ are disjoint graphs on $n_1$ and $n_2$ vertices, respectively, then 
	\begin{equation*}\label{EqGrLCharJoin}
	\Theta(\Gamma_1 \vee \Gamma_2,x)= \frac{x(x-n_1-n_2)}{(x-n_1)(x-n_2)} \Theta(\Gamma_1,x-n_2)\Theta(\Gamma_2,x-n_1).
	\end{equation*}
\end{theorem}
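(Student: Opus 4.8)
The plan is to diagonalize $L(\Gamma_1 \vee \Gamma_2)$ by exploiting its block structure together with the fact that the all-ones vector is always a Laplacian eigenvector for eigenvalue $0$. Writing the vertices of $\Gamma_1$ first and those of $\Gamma_2$ second, every vertex of $\Gamma_1$ acquires $n_2$ extra neighbours and every vertex of $\Gamma_2$ acquires $n_1$ extra neighbours, so the Laplacian takes the block form
\[
L(\Gamma_1 \vee \Gamma_2)=\begin{pmatrix} L(\Gamma_1)+n_2 I_{n_1} & -J_{n_1\times n_2}\\ -J_{n_2\times n_1} & L(\Gamma_2)+n_1 I_{n_2}\end{pmatrix},
\]
where $J$ denotes an all-ones matrix and $I$ an identity matrix of the indicated size. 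The goal is to compute $\Theta(\Gamma_1\vee\Gamma_2,x)=\det\bigl(xI-L(\Gamma_1\vee\Gamma_2)\bigr)$ by locating all eigenvalues of this matrix.

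First I would lift eigenvectors of the factors. Let $\mathbf 1_i$ denote the all-ones vector on $V(\Gamma_i)$, and recall $L(\Gamma_i)\mathbf 1_i=\mathbf 0$. If $\mathbf u$ is a Laplacian eigenvector of $\Gamma_1$ with eigenvalue $\mu$ and $\mathbf u\perp\mathbf 1_1$, then $J_{n_2\times n_1}\mathbf u=\mathbf 0$, so a direct block multiplication shows that $(\mathbf u,\mathbf 0)^{\mathsf T}$ is an eigenvector of $L(\Gamma_1\vee\Gamma_2)$ for eigenvalue $\mu+n_2$. Since the orthogonal complement of $\mathbf 1_1$ is $L(\Gamma_1)$-invariant and carries exactly the $n_1-1$ Laplacian eigenvalues of $\Gamma_1$ other than the single $0$ attached to $\mathbf 1_1$, this produces $n_1-1$ eigenvalues $\mu+n_2$ of the join. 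Symmetrically, padding eigenvectors $\mathbf v$ of $\Gamma_2$ orthogonal to $\mathbf 1_2$ with zeros on top gives $n_2-1$ eigenvalues $\nu+n_1$.

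It remains to account for the two-dimensional subspace spanned by $(\mathbf 1_1,\mathbf 0)^{\mathsf T}$ and $(\mathbf 0,\mathbf 1_2)^{\mathsf T}$, which is invariant under $L(\Gamma_1\vee\Gamma_2)$. On the coordinates $(a,b)$ of $(a\mathbf 1_1,b\mathbf 1_2)^{\mathsf T}$ the action is given by the matrix $\left(\begin{smallmatrix} n_2 & -n_2\\ -n_1 & n_1\end{smallmatrix}\right)$, whose trace is $n_1+n_2$ and whose determinant is $0$; hence its eigenvalues are $0$ and $n_1+n_2$. (The eigenvalue $n_1+n_2$ also follows from \Cref{Eigen_Equality}, since the complement of a join is the disconnected graph $\overline{\Gamma_1}+\overline{\Gamma_2}$.) Collecting all $n_1+n_2$ eigenvalues, the Laplacian spectrum of $\Gamma_1\vee\Gamma_2$ consists of $0$, of $n_1+n_2$, of $\mu+n_2$ over the nonzero part of the spectrum of $\Gamma_1$, and of $\nu+n_1$ over the nonzero part of the spectrum of $\Gamma_2$.

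Finally I would translate this into the stated polynomial identity. Writing $\Theta(\Gamma_1,x-n_2)=\prod_{\mu}(x-n_2-\mu)$ over the Laplacian eigenvalues $\mu$ of $\Gamma_1$, the factor coming from the discarded eigenvalue $\mu=0$ is exactly $x-n_2$; thus the contribution of the lifted eigenvectors is $\Theta(\Gamma_1,x-n_2)/(x-n_2)$, and likewise $\Theta(\Gamma_2,x-n_1)/(x-n_1)$ for $\Gamma_2$. Multiplying by the factors $x$ and $x-n_1-n_2$ from the all-ones subspace yields the claimed formula. The step requiring the most care is precisely this bookkeeping of the zero eigenvalue: one must remove exactly one factor $(x-n_2)$ and one factor $(x-n_1)$, which is what the denominators encode, and then observe that the apparent poles at $x=n_1$ and $x=n_2$ cancel so that the right-hand side is genuinely a polynomial. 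An alternative route avoiding spectral language is a direct determinant evaluation: factor out the scalar shifts and use the rank-one structure of the blocks $J$ through a Schur-complement (matrix determinant lemma) argument to reduce $\det\bigl(xI-L(\Gamma_1\vee\Gamma_2)\bigr)$ to the same product.
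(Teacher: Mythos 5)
Your proof is correct; note, though, that the paper contains no proof of this statement to compare against --- it is quoted verbatim from Mohar's survey \cite{mohar1991laplacian}. On its own merits, your argument is complete: the block form of $L(\Gamma_1\vee\Gamma_2)$ is right; the orthogonality $\mathbf{u}\perp\mathbf{1}_1$ does annihilate the $J$-blocks, so the lifted vectors are genuine eigenvectors for $\mu+n_2$ (and symmetrically $\nu+n_1$); the two-dimensional all-ones subspace is invariant with coordinate matrix $\left(\begin{smallmatrix} n_2 & -n_2\\ -n_1 & n_1\end{smallmatrix}\right)$, giving $0$ and $n_1+n_2$; and the dimension count $(n_1-1)+(n_2-1)+2=n_1+n_2$ certifies that every eigenvalue has been located, even when $\Gamma_1$ or $\Gamma_2$ is disconnected and $0$ occurs with higher multiplicity. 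Your bookkeeping of the two discarded zero eigenvalues is exactly what the denominator $(x-n_1)(x-n_2)$ encodes, and you correctly observe that the apparent poles cancel because $\Theta(\Gamma_i,\cdot)$ vanishes at $0$. For comparison, the derivation that the paper's own toolkit suggests is different and shorter given its ingredients: since $\overline{\Gamma_1\vee\Gamma_2}=\overline{\Gamma_1}+\overline{\Gamma_2}$, one applies \Cref{Eigen_Complemmaent} to pass to the complement, \Cref{GrLCharProd} to factor the characteristic polynomial over the disjoint union, and \Cref{Eigen_Complemmaent} again to return to the join; the formula drops out after translating spectra. That route is more economical if the complement theorem is already in hand, but it conceals the eigenvectors; your direct spectral decomposition is self-contained, exhibits the eigenspace structure of the join explicitly, and adapts immediately to iterated joins --- which is in fact how the paper later uses this theorem in \Cref{p.Gamma.charpol}. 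Either argument constitutes a valid proof.
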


In the following results, the Laplacian spectrum was computed for power graphs of cyclic groups of prime power order and generalized quaternion groups. 

\begin{lemma}[{\cite[Corollary 3.3]{chattopadhyay2015laplacian}}]\label{z.prime.power}
	If $n$ is a prime power, then the Laplacian spectrum of $\mathcal{G}(\mathbb{Z}_n)$ is given by
	$\begin{pmatrix}
	0   &  n\\
	1  &  n-1\\
	\end{pmatrix}.$
\end{lemma}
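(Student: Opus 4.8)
The plan is to reduce the statement to a standard spectral computation by first identifying the graph. Since $\mathbb{Z}_n$ is cyclic and $n$ is a prime power, \Cref{CompleteCond} applies directly and gives that $\mathcal{G}(\mathbb{Z}_n)$ is complete; that is, $\mathcal{G}(\mathbb{Z}_n) \cong K_n$. Thus the problem collapses to computing the Laplacian spectrum of the complete graph on $n$ vertices.

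First I would write down the Laplacian matrix explicitly. In $K_n$ every vertex has degree $n-1$, so $D(K_n) = (n-1)I_n$ and $A(K_n) = J_n - I_n$, where $J_n$ denotes the all-ones matrix. Hence
\[
L(K_n) = D(K_n) - A(K_n) = (n-1)I_n - (J_n - I_n) = nI_n - J_n .
\]
The spectrum of $J_n$ is classical: the all-ones vector is an eigenvector with eigenvalue $n$, and its orthogonal complement (of dimension $n-1$) is the eigenspace for eigenvalue $0$. Transferring through $L(K_n) = nI_n - J_n$, the eigenvalue $n$ of $J_n$ becomes $0$ with multiplicity $1$, while the eigenvalue $0$ becomes $n$ with multiplicity $n-1$. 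This yields exactly the spectrum claimed.

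An alternative route, which avoids writing the matrix, is to pass to the complement via \Cref{Eigen_Complemmaent}. The complement of $K_n$ is the graph on $n$ vertices with no edges, whose Laplacian is the zero matrix, so all of its Laplacian eigenvalues equal $0$. Substituting $\lambda_k(\overline{K_n}) = 0$ into the relation $\lambda_k(\overline{\Gamma}) = n - \lambda_{n-k}(\Gamma)$ for $1 \le k \le n-1$ forces $\lambda_{n-k}(K_n) = n$ throughout, so that $n$ occurs with multiplicity $n-1$, while $\lambda_n(K_n) = 0$ accounts for the remaining eigenvalue.

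There is no genuine obstacle here: the only substantive input is the recognition, via \Cref{CompleteCond}, that the power graph of a cyclic group of prime-power order is complete. Once that reduction is made, the computation is entirely routine and can be presented either directly through $L(K_n) = nI_n - J_n$ or through the complement identity.
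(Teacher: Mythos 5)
Your proof is correct. Note that there is no in-paper argument to compare against: the paper states this lemma as an imported result, quoted from \cite[Corollary 3.3]{chattopadhyay2015laplacian}, and never proves it. Your reconstruction is the natural one and is complete: \Cref{CompleteCond} identifies $\mathcal{G}(\mathbb{Z}_n)$ with $K_n$ when $n$ is a prime power, and then either the explicit computation $L(K_n) = nI_n - J_n$ together with the known spectrum of the all-ones matrix, or the complement identity of \Cref{Eigen_Complemmaent} applied to the edgeless graph $\overline{K_n}$, yields the eigenvalue $0$ with multiplicity $1$ and $n$ with multiplicity $n-1$. Both routes you sketch are valid, and no step is missing.
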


\begin{lemma}[{\cite[Theorem 4.3.3]{sriparna.thesis}}]\label{gen.qua.lap.spec}
	For any integer $\alpha \geq 2$, the Laplacian spectrum of $\mathcal{G}(Q_{2^{\alpha-1}})$ is given by
	$$\begin{pmatrix}
	0   &  2 & 4 & 2^\alpha & 2^{\alpha+1}\\
	1  & 2^{\alpha-1} & 2^{\alpha-1} & 2^\alpha-3 & 2\\
	\end{pmatrix}.$$
\end{lemma}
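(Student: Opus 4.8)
The plan is to first pin down the exact structure of $\mathcal{G}(Q_{2^{\alpha-1}})$ as a join of elementary graphs, and then extract its Laplacian spectrum by applying \Cref{GrLCharProd,GrLCharJoin}.

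\textbf{Structure.} Write $n=2^{\alpha-1}$ and use the presentation $Q_n=\langle a,b \mid a^{2n}=e,\ b^2=a^n,\ b^{-1}ab=a^{-1}\rangle$, so that $|Q_n|=4n=2^{\alpha+1}$ and every element is uniquely of the form $a^i$ or $a^i b$ with $0\le i<2n$. I would record three facts. First, $\langle a\rangle$ is cyclic of order $2n=2^\alpha$, a prime power, so by \Cref{CompleteCond} the induced subgraph $\mathcal{G}(\langle a\rangle)$ is the complete graph $K_{2^\alpha}$. Second, a direct computation gives $(a^i b)^2=a^n$ for every $i$, whence each $a^i b$ has order $4$ with $\langle a^i b\rangle=\{e,\ a^i b,\ a^n,\ a^{i+n}b\}$; in particular $a^n$ is the unique involution of $Q_n$. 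Third, I would determine all adjacencies: $e$ is universal, and $a^n$ is also universal because $a^n\in\langle a\rangle$ (so it is adjacent to every $a^j$) while $a^n=(a^i b)^2$ is a power of every $a^i b$. For the remaining $2^\alpha$ vertices $a^i b$, an element $a^j b$ lies in $\langle a^i b\rangle$ only when $j\equiv i$ or $j\equiv i+n \pmod{2n}$, and no $a^i b$ is a power of any $a^j$ or of any $a^j b$ outside its own cyclic subgroup. Hence, after deleting the two universal vertices, the $2^\alpha-2$ vertices of $\langle a\rangle\setminus\{e,a^n\}$ induce $K_{2^\alpha-2}$, the $2^\alpha$ vertices $\{a^i b\}$ induce the perfect matching $2^{\alpha-1}K_2$ given by the pairs $\{a^i b,a^{i+n}b\}$, and there is no edge between these two blocks. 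Therefore
$$\mathcal{G}(Q_{2^{\alpha-1}}) \cong K_2 \vee \left( K_{2^\alpha-2} + 2^{\alpha-1}K_2\right),$$
where the $K_2$ is induced by $\{e,a^n\}$.

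\textbf{Spectrum.} Set $H=K_{2^\alpha-2}+2^{\alpha-1}K_2$, a disjoint union on $n_2=2^{\alpha+1}-2$ vertices. Using the standard fact $\Theta(K_m,x)=x(x-m)^{m-1}$ together with \Cref{GrLCharProd}, I would obtain
$$\Theta(H,x)=x^{\,1+2^{\alpha-1}}(x-2)^{2^{\alpha-1}}\bigl(x-(2^\alpha-2)\bigr)^{2^\alpha-3}.$$
Applying \Cref{GrLCharJoin} with $\Gamma_1=K_2$ ($n_1=2$) and $\Gamma_2=H$, I would substitute $\Theta(K_2,x-n_2)$ and $\Theta(H,x-2)$ into the join formula; the prefactor $\tfrac{x(x-2^{\alpha+1})}{(x-2)(x-(2^{\alpha+1}-2))}$ cancels exactly the spurious factors produced by the zero eigenvalues of $K_2$ and of $H$, leaving
$$\Theta\bigl(\mathcal{G}(Q_{2^{\alpha-1}}),x\bigr)=x\,(x-2)^{2^{\alpha-1}}(x-4)^{2^{\alpha-1}}\bigl(x-2^\alpha\bigr)^{2^\alpha-3}\bigl(x-2^{\alpha+1}\bigr)^{2}.$$
Reading off the roots with their multiplicities yields precisely the claimed spectrum, and the check that $1+2^{\alpha-1}+2^{\alpha-1}+(2^\alpha-3)+2=2^{\alpha+1}$ confirms the eigenvalue count.

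\textbf{Main obstacle.} The spectral bookkeeping in the second step is routine once the decomposition is in hand. The crux is the structural step: carefully verifying the adjacency pattern of the order-$4$ elements $a^i b$ — in particular that they meet the cyclic block $\langle a\rangle$ only through the two universal vertices $e$ and $a^n$, and that among themselves they contribute nothing beyond the matching $2^{\alpha-1}K_2$. Once this join decomposition is established, \Cref{GrLCharProd,GrLCharJoin} complete the proof mechanically.
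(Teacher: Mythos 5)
Your proposal is correct, and both the structural decomposition and the spectral bookkeeping check out: $\mathcal{G}(Q_{2^{\alpha-1}}) \cong K_2 \vee \bigl( K_{2^\alpha-2} + 2^{\alpha-1}K_2\bigr)$ holds for exactly the reasons you give, and the join/union formulas then yield $\Theta\bigl(\mathcal{G}(Q_{2^{\alpha-1}}),x\bigr)=x(x-2)^{2^{\alpha-1}}(x-4)^{2^{\alpha-1}}(x-2^\alpha)^{2^\alpha-3}(x-2^{\alpha+1})^{2}$, whose degree $2^{\alpha+1}$ confirms the count. One point of comparison is worth making: the paper does not prove this lemma at all — it imports it from Chattopadhyay's thesis \cite[Theorem 4.3.3]{sriparna.thesis} — so your argument has no in-paper counterpart to match; instead, it makes the lemma self-contained. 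Moreover, your argument is exactly what one gets by specializing the paper's own $p$-group machinery (Propositions \ref{p.Gamma.structure} and \ref{p.Gamma.charpol}) to the $2$-group $Q_{2^{\alpha-1}}$: the unique involution $a^n$ satisfies $U(a^n)=Q_n\setminus\{e\}$, its primitive classes are $[a^{n/2}]$ together with the $2^{\alpha-1}$ classes $[a^ib]$, and $\Gamma(a^{n/2})\cong K_{2^\alpha-2}$, $\Gamma(a^ib)\cong K_2$, which reproduces your decomposition as $K_1 \vee \bigl(K_1 \vee \{K_{2^\alpha-2}+2^{\alpha-1}K_2\}\bigr)$. So your route is not only valid but shows the cited result is a corollary of the framework the paper develops later, which is a tidy economy the paper itself does not point out. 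The only cosmetic caveat is the degenerate case $\alpha=2$, where $2^\alpha=4$ coincides with the eigenvalue $4$, so the displayed spectrum's columns overlap (multiplicity of $4$ is then $2^{\alpha-1}+2^\alpha-3=3$); this is a quirk of the lemma's statement, not of your proof.
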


\section{Laplacian spectra of power graphs}
\label{main.section}

We now present our results on Laplacian spectra of power graphs of finite cyclic, dicyclic and finite $p$-groups in Subsections \ref{cyclic.group}, \ref{dicyclic.group} and \ref{p.group}, respectively. 

By applying \Cref{GrLCharJoin} to $\mathcal{G}(G) \cong K_1 \vee \mathcal{G}^*(G)$, we get  
\begin{equation*}
\Theta(\mathcal{G}(G),x) = \displaystyle \frac{x(x-|G|)}{x-1} \Theta(\mathcal{G}^*(G),x-1).
\end{equation*}

	Thus, if $G$ is a group of order $n \geq 3$, then for any $2 \leq i \leq n-1$,	
	\begin{align}\label{alg.con.bound}
		\lambda_i(\mathcal{G}(G)) = \lambda_{i-1}(\mathcal{G}^*(G)) +1.
	\end{align}
	 
From \eqref{alg.con.bound} and \Cref{algcon.positive}, we have the following lemma.

\begin{lemma}\label{alg.con.con}
	Let $G$ be a finite group of order $n \geq 3$. Then the algebraic connectivity of $\mathcal{G}(G)$ is $1$ if and only if its vertex connectivity is $1$.
\end{lemma}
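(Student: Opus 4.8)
The plan is to route everything through \eqref{alg.con.bound} specialized to $i = n-1$, which is legitimate since $n \geq 3$ guarantees $n-1 \geq 2$. This gives $\lambda_{n-1}(\mathcal{G}(G)) = \lambda_{n-2}(\mathcal{G}^*(G)) + 1$. Since the algebraic connectivity of $\mathcal{G}(G)$ is precisely $\lambda_{n-1}(\mathcal{G}(G))$, it equals $1$ if and only if $\lambda_{n-2}(\mathcal{G}^*(G)) = 0$. Now $\mathcal{G}^*(G)$ is a graph on $n-1 \geq 2$ vertices, so $\lambda_{n-2}(\mathcal{G}^*(G))$ is its own algebraic connectivity, and \Cref{algcon.positive} tells us this vanishes if and only if $\mathcal{G}^*(G)$ is disconnected. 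Thus the first half of the argument reduces the eigenvalue condition to the purely combinatorial condition that $\mathcal{G}^*(G) = \mathcal{G}(G) - e$ is disconnected.

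The second half is to show that $\mathcal{G}^*(G)$ is disconnected if and only if $\kappa(\mathcal{G}(G)) = 1$. The crucial structural fact is that $e$ is adjacent to every other vertex of $\mathcal{G}(G)$, since $e = g^{\mathrm{o}(g)}$ is a power of every $g \in G$; equivalently $\mathcal{G}(G) \cong K_1 \vee \mathcal{G}^*(G)$, and in particular $\mathcal{G}(G)$ is connected. For the forward implication, if $\mathcal{G}^*(G)$ is disconnected then deleting the single vertex $e$ disconnects $\mathcal{G}(G)$, so $\kappa(\mathcal{G}(G)) \leq 1$; since $\mathcal{G}(G)$ is connected and nontrivial, $\kappa(\mathcal{G}(G)) \geq 1$, forcing equality. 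For the converse, suppose $\kappa(\mathcal{G}(G)) = 1$, so some single vertex $v$ can be deleted to disconnect $\mathcal{G}(G)$ (it cannot be rendered trivial, since $n-1 \geq 2$). If $v \neq e$, then $e$ survives and remains adjacent to all remaining vertices, keeping the graph connected, a contradiction; hence $v = e$ and $\mathcal{G}^*(G)$ is disconnected. Chaining the two equivalences yields the claim.

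I expect the proof to be short, and the only care needed is bookkeeping rather than a genuine obstacle: one must correctly re-index Fiedler's criterion for the $(n-1)$-vertex graph $\mathcal{G}^*(G)$, and one must dispose of the ``trivial graph'' alternative in the definition of vertex connectivity, which is ruled out here because removing one vertex leaves $n-1 \geq 2$ vertices. The one substantive input to isolate cleanly is the universality of $e$, since it is what simultaneously guarantees connectedness of $\mathcal{G}(G)$ and confines every potential cut vertex to $e$ alone.
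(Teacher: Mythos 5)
Your proof is correct and takes essentially the same route as the paper, whose (very terse) justification is precisely the combination of \eqref{alg.con.bound} and \Cref{algcon.positive} that you spell out: the shift $\lambda_{n-1}(\mathcal{G}(G)) = \lambda_{n-2}(\mathcal{G}^*(G)) + 1$, Fiedler's criterion applied to $\mathcal{G}^*(G)$, and the observation that since $e$ is adjacent to every vertex, $\kappa(\mathcal{G}(G)) = 1$ holds exactly when $\mathcal{G}^*(G)$ is disconnected. Your version just makes explicit the re-indexing and the fact that any cut vertex must be $e$, both of which the paper leaves implicit.
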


\vspace{1pt}
\subsection{Finite cyclic group}\label{cyclic.group}~\\
\vspace{-5pt}

In this subsection, we obtain results on the algebraic connectivity and the multiplicity of the Laplacian spectral radius (which is $n$) of $\mathcal{G}(\mathbb{Z}_n)$. Then we characterize the equality of the vertex connectivity and the algebraic connectivity of $\mathcal{G}(\mathbb{Z}_n)$. 

Observe that $\Theta(\mathcal{G}'(\mathbb{Z}_n),x-\phi(n)-1)$ equals with the characteristic polynomial of the submatrix of $L(\mathcal{G}(\mathbb{Z}_n))$ obtained by deleting rows and columns corresponding to the elements of $\breve{\mathbb{Z}}_n$. Thus by \cite[Theorem 2.2]{chattopadhyay2015laplacian}, we have the following useful lemma.

\begin{lemma}\label{z.eigenvalues}
	If the integer $n > 1$ is not a prime number, then
	\begin{equation*}
	\lambda_i(\mathcal{G}(\mathbb{Z}_n)) =
	\begin{cases}
	n & \text{for } 1 \leq i \leq \phi(n)+1, \\
	\lambda_{i-\phi(n)-1}(\mathcal{G}'(\mathbb{Z}_n)) + \phi(n)+1 & \text{for } \phi(n)+2 \leq i \leq n-1, \\
	0	 & \text{for } i=n.
	\end{cases}
	\end{equation*}
\end{lemma}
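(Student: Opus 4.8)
The plan is to exploit the observation, recorded just before the statement, that every vertex of $\breve{\mathbb{Z}}_n$ is adjacent to all other vertices of $\mathcal{G}(\mathbb{Z}_n)$. Since the identity and any two generators of $\mathbb{Z}_n$ are pairwise adjacent, the set $\breve{\mathbb{Z}}_n$ (of size $\phi(n)+1$) induces a complete graph and forms a dominating clique, so that $\mathcal{G}(\mathbb{Z}_n) \cong K_{\phi(n)+1} \vee \mathcal{G}'(\mathbb{Z}_n)$. The hypothesis that $n$ is not prime enters exactly here: it guarantees $\mathbb{Z}'_n \neq \emptyset$, so $\mathcal{G}'(\mathbb{Z}_n)$ is nontrivial. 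For $n$ prime one would have $\phi(n)+1 = n$, $\mathcal{G}'(\mathbb{Z}_n)$ null, and the index ranges in the asserted formula would overlap or become empty, which is precisely the degenerate case the hypothesis rules out.

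Writing $n_1 = \phi(n)+1$ and $n_2 = n - \phi(n) - 1 \geq 1$, I would feed this join into \Cref{GrLCharJoin}. Using the standard fact $\Theta(K_m, x) = x(x-m)^{m-1}$ (immediate from $L(K_m) = mI - J$), one computes $\Theta(K_{\phi(n)+1}, x - n_2) = (x - n_2)(x-n)^{\phi(n)}$ because $n_2 + \phi(n)+1 = n$. Substituting this into the join formula and cancelling the common factor $(x - n_2)$ leaves
\begin{equation*}
\Theta(\mathcal{G}(\mathbb{Z}_n), x) = \frac{x\,(x-n)^{\phi(n)+1}}{x - \phi(n) - 1}\,\Theta\bigl(\mathcal{G}'(\mathbb{Z}_n),\, x - \phi(n) - 1\bigr).
\end{equation*}
This is consistent with the quoted observation, since the rightmost factor is the characteristic polynomial of $L(\mathcal{G}'(\mathbb{Z}_n)) + (\phi(n)+1)I$, the principal submatrix of $L(\mathcal{G}(\mathbb{Z}_n))$ indexed by $\mathbb{Z}'_n$.

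Next I would read off the roots. Letting $\lambda_1(\mathcal{G}') \geq \cdots \geq \lambda_{n_2}(\mathcal{G}') = 0$ denote the Laplacian eigenvalues of $\mathcal{G}'(\mathbb{Z}_n)$, we have $\Theta(\mathcal{G}'(\mathbb{Z}_n), x - \phi(n)-1) = \prod_{j=1}^{n_2} \bigl(x - \lambda_j(\mathcal{G}') - \phi(n) - 1\bigr)$, whose $j = n_2$ factor equals $x - \phi(n) - 1$ and cancels the denominator. Hence $\Theta(\mathcal{G}(\mathbb{Z}_n), x) = x\,(x-n)^{\phi(n)+1}\prod_{j=1}^{n_2 - 1}\bigl(x - \lambda_j(\mathcal{G}') - \phi(n) - 1\bigr)$, a polynomial of degree $n$, so the eigenvalue multiset of $\mathcal{G}(\mathbb{Z}_n)$ is $\{0\}$, the value $n$ with multiplicity $\phi(n)+1$, and the shifted values $\lambda_j(\mathcal{G}') + \phi(n)+1$ for $1 \le j \le n_2 - 1$. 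To match the piecewise formula I must verify the non-increasing ordering: the bound $\lambda_j(\mathcal{G}') \leq n_2$ (a Laplacian eigenvalue never exceeds the number of vertices, \Cref{Eigen_Equality}) gives $\lambda_j(\mathcal{G}') + \phi(n)+1 \le n$, while each such value is $\geq \phi(n)+1 \geq 1 > 0$; thus the $\phi(n)+1$ copies of $n$ sit at the top, the shifted values (in the same order as the $\lambda_j(\mathcal{G}')$) in the middle, and $0$, a simple eigenvalue since $\mathcal{G}(\mathbb{Z}_n)$ is connected (\Cref{algcon.positive}), at the bottom. This is exactly the three-case formula.

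The one delicate point — and where I would take most care — is the bookkeeping of the zero eigenvalue of $\mathcal{G}'(\mathbb{Z}_n)$: the division by $x - \phi(n)-1$ removes exactly one zero eigenvalue, whether or not $\mathcal{G}'(\mathbb{Z}_n)$ is connected. When $n$ is a product of two distinct primes, $\mathcal{G}'(\mathbb{Z}_n)$ is disconnected (\Cref{SepSetLemma}) and carries several zero eigenvalues; all but one of them are shifted up to $\phi(n)+1$ and reappear among the middle eigenvalues $\lambda_j(\mathcal{G}') + \phi(n)+1$. One must check that retaining $\lambda_1(\mathcal{G}'), \ldots, \lambda_{n_2-1}(\mathcal{G}')$ and discarding only $\lambda_{n_2}(\mathcal{G}') = 0$ is precisely what the single cancellation and the index shift $i \mapsto i - \phi(n) - 1$ encode.
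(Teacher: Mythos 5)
Your proof is correct, but it follows a different route from the paper's. The paper disposes of this lemma essentially by citation: it observes that $\Theta(\mathcal{G}'(\mathbb{Z}_n),x-\phi(n)-1)$ is precisely the characteristic polynomial of the principal submatrix of $L(\mathcal{G}(\mathbb{Z}_n))$ obtained by deleting the rows and columns indexed by $\breve{\mathbb{Z}}_n$, and then invokes \cite[Theorem 2.2]{chattopadhyay2015laplacian}, which already packages the Laplacian spectrum of $\mathcal{G}(\mathbb{Z}_n)$ in terms of that submatrix. You instead reprove that external result from scratch: you use the decomposition $\mathcal{G}(\mathbb{Z}_n) \cong K_{\phi(n)+1} \vee \mathcal{G}'(\mathbb{Z}_n)$, apply the join formula of \Cref{GrLCharJoin} together with $\Theta(K_m,x)=x(x-m)^{m-1}$, cancel the factor $x-\phi(n)-1$ against the guaranteed zero eigenvalue of $\mathcal{G}'(\mathbb{Z}_n)$, and then order the resulting roots using $\lambda_1(\mathcal{G}'(\mathbb{Z}_n)) \leq n-\phi(n)-1$ (\Cref{Eigen_Equality}) and the positivity of the shifted values. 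All steps are sound, including the two that genuinely need care: the division removes exactly one zero eigenvalue of $\mathcal{G}'(\mathbb{Z}_n)$ whether or not that graph is disconnected, and the middle block of values lies in the interval $[\phi(n)+1,\,n]$, so the claimed non-increasing indexing is legitimate (a possible tie with $n$ at position $\phi(n)+2$ does no harm, since the sorted sequence is what the formula asserts). What your approach buys is self-containment — it uses only results quoted in the paper — and it is in fact the very technique the paper deploys later for $p$-groups (the dominating-clique-plus-join argument behind \Cref{p.Gamma.charpol}), so it unifies the cyclic case with that treatment; what the paper's citation buys is brevity.
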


In \cite{chattopadhyay2015laplacian}, it was shown that the algebraic connectivity of $\mathcal{G}(\mathbb{Z}_n)$ attains its lower bound $\phi(n)+1$ when $n$ is a prime number or product of two distinct primes. In the following theorem, we show that the converse holds as well.

\begin{theorem}\label{alg_con_phi_n}
	For any integer $n>1$, the algebraic connectivity of $\mathcal{G}(\mathbb{Z}_n)$ is $\phi(n)+1$ if and only if $n$ is a prime number or product of two distinct primes.
\end{theorem}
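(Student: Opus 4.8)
The plan is to use \Cref{z.eigenvalues} to transfer the question about the algebraic connectivity $\lambda_{n-1}(\mathcal{G}(\mathbb{Z}_n))$ into a question about the \emph{connectivity} of the reduced graph $\mathcal{G}'(\mathbb{Z}_n)$, and then to close the argument with \Cref{SepSetLemma}. The guiding observation is that \Cref{z.eigenvalues} expresses the middle block of eigenvalues of $\mathcal{G}(\mathbb{Z}_n)$ as a shift (by $\phi(n)+1$) of the eigenvalues of $\mathcal{G}'(\mathbb{Z}_n)$, so the smallest nonzero eigenvalue of $\mathcal{G}(\mathbb{Z}_n)$ should be controlled by the smallest nonzero eigenvalue of $\mathcal{G}'(\mathbb{Z}_n)$.

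First I would dispose of the small exceptional cases, namely those $n$ for which $\breve{\mathbb{Z}}_n$ is so large that the middle branch of \Cref{z.eigenvalues} never reaches the index $i=n-1$. That branch applies to $\lambda_{n-1}$ precisely when $\phi(n)+2 \le n-1$, i.e. $n-\phi(n) \ge 3$. Since a prime $n$ gives $n-\phi(n)=1$, and since every composite $n$ satisfies $\phi(n)\le n-2$ with equality only at $n=4$, the values with $n-\phi(n)\le 2$ are exactly $n$ prime and $n=4$. For $n$ prime, \Cref{CompleteCond} gives $\mathcal{G}(\mathbb{Z}_n)\cong K_n$, whose algebraic connectivity is $n=\phi(n)+1$, so the claimed equality holds. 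For $n=4$ we again have $\mathcal{G}(\mathbb{Z}_4)\cong K_4$, with algebraic connectivity $4\neq 3=\phi(4)+1$, and $4$ is neither prime nor a product of two distinct primes; so the biconditional is correct here as well.

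For all remaining $n$ (composite with $n-\phi(n)\ge 3$), the graph $\mathcal{G}'(\mathbb{Z}_n)$ has $m:=n-\phi(n)-1\ge 2$ vertices, and the middle branch of \Cref{z.eigenvalues} at $i=n-1$ reads
\begin{equation*}
\lambda_{n-1}(\mathcal{G}(\mathbb{Z}_n)) = \lambda_{m-1}(\mathcal{G}'(\mathbb{Z}_n)) + \phi(n) + 1 .
\end{equation*}
The term $\lambda_{m-1}(\mathcal{G}'(\mathbb{Z}_n))$ is exactly the algebraic connectivity of the $m$-vertex graph $\mathcal{G}'(\mathbb{Z}_n)$, which by \Cref{algcon.positive} equals $0$ when $\mathcal{G}'(\mathbb{Z}_n)$ is disconnected and is strictly positive when it is connected. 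Hence $\lambda_{n-1}(\mathcal{G}(\mathbb{Z}_n))=\phi(n)+1$ if and only if $\mathcal{G}'(\mathbb{Z}_n)$ is disconnected. Invoking \Cref{SepSetLemma}, for composite $n$ the graph $\mathcal{G}'(\mathbb{Z}_n)$ is disconnected if and only if $n$ is a product of two distinct primes, which completes the equivalence together with the exceptional cases above.

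The main obstacle I anticipate is not conceptual but bookkeeping: one must verify carefully which indices $i$ fall into each branch of \Cref{z.eigenvalues} and confirm that the only $n$ for which $\lambda_{n-1}$ escapes the middle branch are $n$ prime and $n=4$ (this is where the elementary estimate $\phi(n)\le n-2$ for composite $n$, with equality only at $n=4$, is needed). Once the index at $i=n-1$ is correctly identified with the algebraic connectivity of $\mathcal{G}'(\mathbb{Z}_n)$, the reduction to \Cref{SepSetLemma} is immediate and the proof is essentially complete.
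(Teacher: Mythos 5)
Your proof is correct and takes essentially the same route as the paper's: both arguments use \Cref{z.eigenvalues} to identify the algebraic connectivity of $\mathcal{G}(\mathbb{Z}_n)$ with that of $\mathcal{G}'(\mathbb{Z}_n)$ shifted by $\phi(n)+1$, then invoke \Cref{algcon.positive} and \Cref{SepSetLemma}. The only differences are cosmetic but in your favor: the paper cites \cite[Theorem 2.12]{chattopadhyay2015laplacian} for the converse direction while your biconditional chain handles both directions at once, and you treat the edge cases ($n$ prime and $n=4$, where the middle branch of \Cref{z.eigenvalues} is empty) explicitly rather than implicitly.
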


\begin{proof}
	Let the algebraic connectivity of $\mathcal{G}(\mathbb{Z}_n)$ be $\phi(n)+1$. Observe that $\phi(n)+1=n$ if and only if $n$ is a prime number. Now suppose that $n$ is not prime. By \Cref{z.eigenvalues}, the algebraic connectivity of  $\mathcal{G}'(\mathbb{Z}_n)$ is $0$. Thus by \Cref{algcon.positive}, $\mathcal{G}'(\mathbb{Z}_n)$ is disconnected. Hence by applying \Cref{SepSetLemma}, we conclude that $n$ is a product of two distinct primes. Proof of the converse follows from  \cite[Theorem 2.12]{chattopadhyay2015laplacian}.
\end{proof}

It is known from \cite{chattopadhyay2015laplacian} that the multiplicity of $n$ as a Laplacian eigenvalue of $\mathcal{G}(\mathbb{Z}_n)$ is bounded below by $\phi(n)+1$. The next theorem determines all $n \in \mathbb{N}$ for which the equality holds.

\begin{theorem}
For any integer $n>1$, the multiplicity of the Laplacian eigenvalue $n$ of $\mathcal{G}(\mathbb{Z}_n)$ is $\phi(n)+1$ if and only if $n=4$ or $n$ is not a prime power.
\end{theorem}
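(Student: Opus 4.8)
The plan is to reduce the computation of the multiplicity of $n$ to a connectivity question about the smaller graph $\mathcal{G}'(\mathbb{Z}_n)$, and then to settle that connectivity question combinatorially. Throughout I would write $m=|\mathbb{Z}'_n|=n-\phi(n)-1$, so that $\mathcal{G}'(\mathbb{Z}_n)$ is a graph on $m$ vertices.

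First I would dispose of the prime power case directly from \Cref{z.prime.power}: if $n$ is a prime power, then the Laplacian spectrum of $\mathcal{G}(\mathbb{Z}_n)$ is $\bigl(\begin{smallmatrix}0 & n\\ 1 & n-1\end{smallmatrix}\bigr)$, so the multiplicity of $n$ is exactly $n-1$. Writing $n=p^{\alpha}$ gives $n-\phi(n)=p^{\alpha-1}$, hence $n-1=\phi(n)+1$ holds iff $p^{\alpha-1}=2$, that is, iff $n=4$. This settles every prime power at once, with equality precisely when $n=4$.

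Next, assume $n$ is not a prime power, so $n$ is composite and \Cref{z.eigenvalues} applies. From \Cref{z.eigenvalues} the value $n$ occurs $\phi(n)+1$ times from the top block, together with one additional copy for each eigenvalue of $\mathcal{G}'(\mathbb{Z}_n)$ that equals $m$ (since $\lambda_j(\mathcal{G}'(\mathbb{Z}_n))+\phi(n)+1=n$ forces $\lambda_j(\mathcal{G}'(\mathbb{Z}_n))=m$). By \Cref{Eigen_Equality} applied to $\mathcal{G}'(\mathbb{Z}_n)$ we have $\lambda_1(\mathcal{G}'(\mathbb{Z}_n))\le m$, with equality iff $\overline{\mathcal{G}'(\mathbb{Z}_n)}$ is disconnected; consequently such an extra copy exists iff $\overline{\mathcal{G}'(\mathbb{Z}_n)}$ is disconnected. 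Therefore the multiplicity of $n$ equals $\phi(n)+1$ iff $\overline{\mathcal{G}'(\mathbb{Z}_n)}$ is connected, and it remains to prove that $\overline{\mathcal{G}'(\mathbb{Z}_n)}$ is connected whenever $n$ has at least two distinct prime factors.

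This last step is the main obstacle. When $n$ is a product of two distinct primes it is immediate: \Cref{SepSetLemma} shows $\mathcal{G}'(\mathbb{Z}_n)$ is disconnected, and the complement of a disconnected graph (on at least two vertices) is connected. For the remaining $n$, the graph $\mathcal{G}'(\mathbb{Z}_n)$ is itself connected, so a direct argument is needed. The key observation is that two distinct vertices $\overline a,\overline b\in\mathbb{Z}'_n$ are non-adjacent in $\mathcal{G}(\mathbb{Z}_n)$ iff their orders $\mathrm{o}(\overline a),\mathrm{o}(\overline b)$ are incomparable under divisibility, using that $\langle\overline a\rangle\subseteq\langle\overline b\rangle$ iff $\mathrm{o}(\overline a)\mid\mathrm{o}(\overline b)$ in the cyclic group $\mathbb{Z}_n$ and that elements of equal order generate the same subgroup. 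Thus $\overline{\mathcal{G}'(\mathbb{Z}_n)}$ is a blow-up: its vertices split into independent sets indexed by the proper nontrivial divisors $d\mid n$ (the possible orders), and two vertices are adjacent exactly when their orders are incomparable. Fixing distinct primes $p,q\mid n$ (each a proper nontrivial divisor, since $n$ is not a prime power), the prime divisors form a clique under incomparability. I would then show every order class reaches this clique: a divisor $d$ missing some prime $p_i\mid n$ is incomparable to $p_i$ directly, while a divisor $d$ divisible by all primes of $n$ has, since $d<n$, a strictly smaller exponent of some prime $p_j$ than $n$ does, so $d$ is incomparable to the full $p_j$-part $p_j^{a_j}$ of $n$, which is in turn incomparable to any other prime. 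Since additionally every class has an incomparable partner, the vertices within each independent set are joined through a common neighbor, and the classes are joined through paths to the prime clique; hence $\overline{\mathcal{G}'(\mathbb{Z}_n)}$ is connected. Combining the two cases, the multiplicity of $n$ is $\phi(n)+1$ exactly when $n=4$ or $n$ is not a prime power.
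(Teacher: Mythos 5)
Your proof is correct, and its spectral skeleton is the same as the paper's: both use \Cref{z.eigenvalues} together with \Cref{Eigen_Equality} to reduce the question of whether the multiplicity of $n$ exceeds $\phi(n)+1$ to whether $\overline{\mathcal{G}'(\mathbb{Z}_n)}$ is connected, and both read off the prime-power cases from \Cref{z.prime.power}. The genuine difference lies in how the key connectivity fact is established. The paper simply cites Lemma 2.11 of \cite{chattopadhyay2015laplacian} for the connectedness of $\overline{\mathcal{G}'(\mathbb{Z}_n)}$ when $n$ is not a prime power; you instead prove it from scratch, observing that in a cyclic group adjacency in the power graph is equivalent to comparability of element orders under divisibility, so that $\overline{\mathcal{G}'(\mathbb{Z}_n)}$ is a blow-up of the incomparability graph on the proper nontrivial divisors of $n$, and then exhibiting explicit paths from every divisor class to the clique formed by the prime divisors (routing a divisor that is divisible by every prime of $n$ through the full $p_j$-part $p_j^{a_j}$), while also checking that each class has an incomparable partner so that vertices inside one independent set are joined through a common neighbor. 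Your argument is therefore self-contained where the paper's rests on an external lemma, at the cost of an extra paragraph of divisor combinatorics; you also treat all prime powers uniformly via the identity $n-\phi(n)=p^{\alpha-1}$, so the special role of $n=4$ emerges from the arithmetic rather than being listed as a separate case. Two minor remarks: your separate handling of $n=pq$ via \Cref{SepSetLemma} is harmless but redundant, since your divisor argument already covers that case; and your reduction actually yields the cleaner statement that the multiplicity equals $\phi(n)+1$ \emph{if and only if} $\overline{\mathcal{G}'(\mathbb{Z}_n)}$ is connected, which is slightly stronger than the one-directional inequality the paper extracts.
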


\begin{proof}
For $n=4$, the multiplicity of $n$ is $n-1 = 3 = \phi(n)+1$ (cf. \Cref{z.prime.power}). Now suppose $n$ is not a prime power. 
Then from \cite[Lemma 2.11]{chattopadhyay2015laplacian}, $\overline{\mathcal{G}'(\mathbb{Z}_n)}$ is connected. Additionally, $|\breve{\mathbb{Z}}_n| = \phi(n)+1$.
 Thus by applying \Cref{Eigen_Equality}, we get $\lambda_1(\mathcal{G}'(\mathbb{Z}_n)) < n-\phi(n)-1$. This together with \Cref{z.eigenvalues} yield $\lambda_i(\mathcal{G}(\mathbb{Z}_n)) < n \text{ for all } \phi(n)+2 \leq i \leq n$. Hence, by following \Cref{z.eigenvalues} once again, the multiplicity of $n$ as a Laplacian eigenvalue of $\mathcal{G}(\mathbb{Z}_n)$ is $\phi(n)+1$.

Conversely, if $n$ is a prime power and $n \neq 4$, then the multiplicity of the Laplacian eigenvalue $n$ of $\mathcal{G}(\mathbb{Z}_n)$ is $n-1$ (cf. \Cref{z.prime.power}), and $n-1 \neq \phi(n)+1$.
\end{proof}

We now determine all $n$ for which the vertex connectivity and the algebraic connectivity of $\mathcal{G}(\mathbb{Z}_n)$ are equal.

\begin{theorem}\label{z.con.algcon}
	For any integer $n>1$, $\kappa(\mathcal{G}(\mathbb{Z}_n)) = \lambda_{n-1}(\mathcal{G}(\mathbb{Z}_n))$ if and only if $n$ is a product of two distinct primes.
\end{theorem}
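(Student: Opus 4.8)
The plan is to use the join decomposition $\mathcal{G}(\mathbb{Z}_n) = \mathcal{G}(\breve{\mathbb{Z}}_n) \vee \mathcal{G}'(\mathbb{Z}_n)$, in which the $\phi(n)+1$ vertices of $\breve{\mathbb{Z}}_n$ are universal, and to reduce the comparison of $\kappa(\mathcal{G}(\mathbb{Z}_n))$ with $\lambda_{n-1}(\mathcal{G}(\mathbb{Z}_n))$ to the analogous comparison for $\mathcal{G}'(\mathbb{Z}_n)$; throughout, $a(\cdot)$ denotes algebraic connectivity. For the forward implication, suppose $n=pq$. Then $\lambda_{n-1}(\mathcal{G}(\mathbb{Z}_n)) = \phi(n)+1$ by \Cref{alg_con_phi_n}. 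Since every vertex of $\breve{\mathbb{Z}}_n$ is adjacent to all others, any vertex cut must contain all of $\breve{\mathbb{Z}}_n$, so $\kappa(\mathcal{G}(\mathbb{Z}_n)) \ge \phi(n)+1$; and as $\mathcal{G}'(\mathbb{Z}_n)$ is disconnected (\Cref{SepSetLemma}), deleting $\breve{\mathbb{Z}}_n$ already disconnects $\mathcal{G}(\mathbb{Z}_n)$, whence $\kappa(\mathcal{G}(\mathbb{Z}_n)) = \phi(n)+1$ and equality follows.

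For the converse I argue by contraposition. If $n$ is a prime power, then $\mathcal{G}(\mathbb{Z}_n)=K_n$ by \Cref{CompleteCond}, so $\kappa = n-1 \neq n = \lambda_{n-1}$. Now assume $n$ is composite, not a prime power, and not a product of two distinct primes. Then $\mathcal{G}'(\mathbb{Z}_n)$ is connected by \Cref{SepSetLemma} and non-complete (otherwise $\mathcal{G}(\mathbb{Z}_n)$ would be complete). Two estimates drive the argument. From \Cref{z.eigenvalues} with $i=n-1$ (which lies in the stated range, since $\mathcal{G}'(\mathbb{Z}_n)$ has at least two vertices) I get $\lambda_{n-1}(\mathcal{G}(\mathbb{Z}_n)) = \phi(n)+1 + a(\mathcal{G}'(\mathbb{Z}_n))$. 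On the other hand, any vertex cut of $\mathcal{G}(\mathbb{Z}_n)$ contains $\breve{\mathbb{Z}}_n$, and the remainder of the cut must separate $\mathcal{G}'(\mathbb{Z}_n)$, so $\kappa(\mathcal{G}(\mathbb{Z}_n)) \ge \phi(n)+1 + \kappa(\mathcal{G}'(\mathbb{Z}_n))$. Hence it suffices to prove the strict inequality $a(\mathcal{G}'(\mathbb{Z}_n)) < \kappa(\mathcal{G}'(\mathbb{Z}_n))$.

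To obtain this strict inequality, I would show that $\overline{\mathcal{G}'(\mathbb{Z}_n)}$ is connected and invoke the criterion of Kirkland et al. \cite{MR1873608}, by which a connected non-complete graph can have $a=\kappa$ only if it is a join, i.e. only if its complement is disconnected; combined with Fiedler's inequality $a \le \kappa$ \cite{fiedler1973algebraic}, this forces $a(\mathcal{G}'(\mathbb{Z}_n)) < \kappa(\mathcal{G}'(\mathbb{Z}_n))$. In $\mathcal{G}'(\mathbb{Z}_n)$ two elements are adjacent exactly when their orders are comparable under divisibility, so in the complement they are adjacent exactly when their orders are incomparable; thus connectivity of $\overline{\mathcal{G}'(\mathbb{Z}_n)}$ is equivalent to connectivity of the incomparability graph on the proper nontrivial divisors of $n$. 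I would establish the latter by noting that the (at least two) primes dividing $n$ are pairwise incomparable and hence lie in one component $X$, into which every full prime-power divisor of $n$ also falls; and that any proper nontrivial divisor $d$ is incomparable to some such prime power: if some prime fails to divide $d$, then $d$ is incomparable to that prime, and otherwise $\mathrm{rad}(n)\mid d$, so that $d<n$ forces a full prime power $p^{a}$ of $n$ with $p^{a}\nmid d$, and $d$, having a second prime factor, is incomparable to $p^{a}$.

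The main obstacle is this last step: turning the hypotheses ``composite, not a prime power, not $pq$'' into connectivity of $\overline{\mathcal{G}'(\mathbb{Z}_n)}$ and thence into the strict inequality. The graph-theoretic conversion is exactly the Kirkland et al. criterion, so the genuine work is the divisor-lattice argument, whose delicate case is $\mathrm{rad}(n)\mid d$ --- precisely where both the failure of ``prime power'' (forcing at least two prime factors) and the failure of ``$pq$'' (ensuring, via \Cref{SepSetLemma}, that $\mathcal{G}'(\mathbb{Z}_n)$ is connected, so that Fiedler and Kirkland apply) are needed.
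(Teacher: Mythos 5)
Your argument is correct and follows essentially the same route as the paper's proof: both dispose of the prime-power case via completeness, and otherwise reduce to $\mathcal{G}'(\mathbb{Z}_n)$ (connected by \Cref{SepSetLemma}), rule out equality there by the Kirkland et al.\ criterion using connectivity of $\overline{\mathcal{G}'(\mathbb{Z}_n)}$, and lift back to $\mathcal{G}(\mathbb{Z}_n)$ with \Cref{z.eigenvalues}. The only difference is that where the paper cites the literature --- \cite[Lemma 2.11]{chattopadhyay2015laplacian} for connectivity of $\overline{\mathcal{G}'(\mathbb{Z}_n)}$, \cite[Lemma 2.4]{ConPowerGr17} for $\kappa(\mathcal{G}(\mathbb{Z}_n))=\kappa(\mathcal{G}'(\mathbb{Z}_n))+\phi(n)+1$, and external results for the forward direction --- you supply direct proofs (the divisor-incomparability argument and the universal-vertex counting), and these are sound.
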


\begin{proof}
	Let $n$ be a product of two distinct primes. Then from \cite[Theorem 3(ii)]{ChattopadhyayConnectivity} and \cite[Corollary 2.6]{chattopadhyay2015laplacian}, $\kappa(\mathcal{G}(\mathbb{Z}_n)) = \phi(n)+1 = \lambda_{n-1}(\mathcal{G}(\mathbb{Z}_n))$.
		
	Now suppose $n$ is not a product of two distinct primes. If $n$ is a prime power, then considering \Cref{CompleteCond}, $\kappa(\mathcal{G}(\mathbb{Z}_n)) = n-1$ and $ \lambda_{n-1}(\mathcal{G}(\mathbb{Z}_n)) = n$. That is, the equality does not hold.  Thus $n$ has at least two distinct prime factors. We recall from \cite{MR1873608} that for any graph $\Gamma$ on $n$ vertices, if both $\Gamma$ and $\overline{\Gamma}$ are connected, then $\kappa(\Gamma) \neq  \lambda_{n-1}(\Gamma)$. By \Cref{SepSetLemma}, $\mathcal{G}'(\mathbb{Z}_n)$ is connected and by \cite[Lemma 2.11]{chattopadhyay2015laplacian}, $\overline{\mathcal{G}'(\mathbb{Z}_n)}$ is connected. Hence we have $\kappa(\mathcal{G}'(\mathbb{Z}_n)) \neq \lambda_{n-\phi(n)-2}(\mathcal{G}'(\mathbb{Z}_n))$. Additionally, it was ascertained in \cite[Lemma 2.4]{ConPowerGr17} that $\kappa(\mathcal{G}(\mathbb{Z}_n))=\kappa(\mathcal{G}'(\mathbb{Z}_n))+\phi(n)+1$. Consequently, using \Cref{z.eigenvalues}, we get $\kappa(\mathcal{G}(\mathbb{Z}_n)) \neq \lambda_{n-1}(\mathcal{G}(\mathbb{Z}_n))$.
\end{proof}

\vspace{.2cm}
 
\subsection{Dicyclic group}~\\
\label{dicyclic.group}
\vspace{-5pt}

In this subsection, we give bounds of the algebraic connectivity and determine the multiplicity of the Laplacian spectral radius of $\mathcal{G}(Q_n)$. Then we prove that the vertex connectivity and the algebraic connectivity of $\mathcal{G}(Q_n)$ are equal if and only if $\mathcal{G}(Q_n)$ is Laplacian integral. Moreover, we show that the above are equivalent to each of the statements that the algebraic connectivity of $\mathcal{G}(Q_n)$ is $2$ and $Q_n$ is generalized quaternion. We begin by formally stating the definition of a dicyclic group.

For an integer $n \geq 2$, the \emph{dicyclic group} $Q_n$ is a finite group of order $4n$ having presentation
\begin{equation}\label{DicyclicEq}
Q_n=\left \langle a, b \mid a^{2n}=e, a^n=b^2, ab=ba^{-1} \right \rangle,
\end{equation}
where $e$ is the identity element of $Q_n$. When $n$ is a power of $2$, $Q_n$ is called a \emph{generalized quaternion group} of order $4n$. Throughout this subsection, we follow the above presentation of $Q_n$. 

It is known that $(a^ib)^2=a^n$ for all $0 \leq i \leq 2n-1$, and 
\begin{equation}\label{EqQn}
\langle a^ib \rangle=\langle a^{n+i}b \rangle=\{ e, a^ib, a^n, a^{n+i}b \} \text{ for all } 0 \leq i \leq n-1.
\end{equation}

We need the following property of dicyclic groups (cf. \cite{roman2012group}).

\begin{lemma}\label{qn.aib}
	Any element of $Q_n - \langle a \rangle$ is of the form $a^ib$ for some $0 \leq i \leq 2n-1$.
\end{lemma}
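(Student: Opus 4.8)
The plan is to exploit the fact that $\langle a \rangle$ is a subgroup of index $2$ in $Q_n$, so that its complement is a single coset which can be described explicitly. First I would record the two facts that drive the argument: from the presentation~\eqref{DicyclicEq} the element $a$ satisfies $a^{2n}=e$, so $\langle a \rangle = \{e, a, \ldots, a^{2n-1}\}$ consists of at most $2n$ elements; and $|Q_n|=4n$ is given. To conclude that $\langle a \rangle$ has \emph{exactly} $2n$ elements (equivalently, index $2$), I would first put an arbitrary element of $Q_n$ into a normal form.

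For the normal form, I would rewrite the relation $ab=ba^{-1}$ as $ba=a^{-1}b$, whence $ba^{k}=a^{-k}b$ for every integer $k$. Starting from any word in $a^{\pm 1}, b^{\pm 1}$, one repeatedly applies this identity to push every occurrence of $b$ to the right, collapsing the word to $a^{m}b^{s}$ for some integers $m,s$. Since $b^{2}=a^{n}$, an even power of $b$ is absorbed into a power of $a$ and an odd power leaves a single trailing $b$, so every element of $Q_n$ equals $a^{i}$ or $a^{i}b$ with $0\le i\le 2n-1$. This exhibits at most $2n+2n=4n$ elements; as $|Q_n|=4n$, all of them must be distinct. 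In particular $\langle a\rangle=\{a^{i}:0\le i\le 2n-1\}$ genuinely has $2n$ elements, and the remaining $2n$ elements are exactly $\{a^{i}b:0\le i\le 2n-1\}$.

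It then remains only to identify this second family with $Q_n-\langle a\rangle$. I would verify $b\notin\langle a\rangle$: if $b=a^{k}$, then substituting into $ab=ba^{-1}$ gives $a^{k+1}=a^{k-1}$, i.e.\ $a^{2}=e$, contradicting $\mathrm{o}(a)=2n\ge 4$ (as $n\ge 2$). Hence $\langle a\rangle b$ is the nontrivial coset, and since $\langle a\rangle$ has index $2$ there are only the two cosets $\langle a\rangle$ and $\langle a\rangle b$; thus every element outside $\langle a\rangle$ lies in $\langle a\rangle b=\{a^{i}b:0\le i\le 2n-1\}$, which is the claim.

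The one genuine point to nail down—the main obstacle, such as it is in this elementary lemma—is that $a$ has order exactly $2n$ rather than a proper divisor of $2n$; without this the family $\{a^{i}b\}$ could collapse to fewer than $2n$ elements and fail to cover the complement. The normal-form-plus-counting step above is precisely what rules this out, leveraging the given order $|Q_n|=4n$. Everything else is routine manipulation of the two defining relations, and indeed the statement is standard for dicyclic groups (cf.~\cite{roman2012group}).
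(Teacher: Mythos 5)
Your proof is correct. Note, however, that the paper itself does not prove this lemma at all: it is stated as a known property of dicyclic groups and dispatched with a citation to \cite{roman2012group}, so there is no in-paper argument to compare against. What you supply is the standard self-contained justification: rewrite $ab=ba^{-1}$ as $ba^{k}=a^{-k}b$, reduce every word to the normal form $a^{i}$ or $a^{i}b$ with $0\le i\le 2n-1$ (using $b^{2}=a^{n}$ to absorb even powers of $b$), and then invoke $|Q_n|=4n$ to force all $4n$ listed expressions to be pairwise distinct. This pigeonhole step is the right way to handle the one genuine subtlety you identify, namely that the presentation alone does not immediately give $\mathrm{o}(a)=2n$; and once distinctness is established, the elements $a^{i}b$ are automatically outside $\langle a\rangle$ and exhaust its complement, so your separate verification that $b\notin\langle a\rangle$ is already implied by the counting (harmless, but redundant). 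In short: the paper buys brevity by outsourcing the fact; your argument buys self-containment at the cost of a few lines of routine word manipulation, and it is sound.
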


\begin{lemma}\label{AlgCon_Dicyclic_LB}
	For any integer $n \geq 2$, the algebraic connectivity of $\mathcal{G}(Q_n)$ satisfies
	\begin{equation*}\label{Eq_AlgCon_Dicyclic_LB}
	1 < \lambda_{4n-1}\left(\mathcal{G}(Q_n)\right)  \leq 2.
	\end{equation*}
\end{lemma}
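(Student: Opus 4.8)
The plan is to prove the two bounds on the algebraic connectivity $\lambda_{4n-1}(\mathcal{G}(Q_n))$ separately. By \eqref{alg.con.bound}, since $Q_n$ has order $4n \geq 8 \geq 3$, we have $\lambda_{4n-1}(\mathcal{G}(Q_n)) = \lambda_{4n-2}(\mathcal{G}^*(Q_n)) + 1$. So the claim $1 < \lambda_{4n-1}(\mathcal{G}(Q_n)) \leq 2$ is equivalent to $0 < \lambda_{4n-2}(\mathcal{G}^*(Q_n)) \leq 1$, where $\lambda_{4n-2}(\mathcal{G}^*(Q_n))$ is the algebraic connectivity of the proper power graph $\mathcal{G}^*(Q_n)$ on $4n-1$ vertices. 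Thus the whole problem reduces to controlling the algebraic connectivity of $\mathcal{G}^*(Q_n)$.

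**For the strict lower bound** $\lambda_{4n-1}(\mathcal{G}(Q_n)) > 1$, equivalently $\lambda_{4n-2}(\mathcal{G}^*(Q_n)) > 0$, it suffices by \Cref{algcon.positive} to show that $\mathcal{G}^*(Q_n)$ is connected. This is exactly what \Cref{pgroup.connected} gives when $Q_n$ is generalized quaternion, but for general $n$ I cannot invoke that lemma. Instead I would argue directly from the structure of $Q_n$: every element $a^i b$ outside $\langle a \rangle$ lies, by \eqref{EqQn}, in the cyclic subgroup $\{e, a^i b, a^n, a^{n+i} b\}$, so in $\mathcal{G}^*(Q_n)$ each such vertex is adjacent to the common vertex $a^n$ (the unique involution, since $a^n = b^2$ has order $2$). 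Meanwhile the vertices of $\langle a \rangle^* = \langle a \rangle - e$ induce the connected proper power graph of the cyclic group $\mathbb{Z}_{2n}$, and $a^n$ belongs to $\langle a \rangle$. Hence $a^n$ is a vertex adjacent to every $a^i b$ and lying in the connected piece coming from $\langle a \rangle$, so the whole graph $\mathcal{G}^*(Q_n)$ is connected.

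**For the upper bound** $\lambda_{4n-1}(\mathcal{G}(Q_n)) \leq 2$, equivalently $\lambda_{4n-2}(\mathcal{G}^*(Q_n)) \leq 1$, the natural tool is interlacing or a direct eigenvalue witness. The idea I would pursue is to exhibit a suitable induced subgraph structure: the cyclic subgroups $\langle a^i b\rangle$ for $0 \leq i \leq n-1$ each contribute the "pendant-like" pair $\{a^i b, a^{n+i} b\}$, and these two vertices have identical neighbourhoods in $\mathcal{G}^*(Q_n)$ (both adjacent precisely to $a^n$ among vertices outside their own class, plus to each other). Two vertices with the same closed neighbourhood (twins) force a Laplacian eigenvalue equal to their common degree; more usefully here, a pair of non-adjacent or adjacent twins contributes a controlled eigenvalue. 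The cleanest route is to locate, via \Cref{d.ver.con}, that $\kappa(\mathcal{G}(Q_n)) = 2$ and combine it with the standard inequality $\lambda_{n-1}(\Gamma) \leq \kappa(\Gamma)$ (Fiedler's bound, algebraic connectivity $\leq$ vertex connectivity) applied to $\Gamma = \mathcal{G}(Q_n)$ on $4n$ vertices, which immediately gives $\lambda_{4n-1}(\mathcal{G}(Q_n)) \leq \kappa(\mathcal{G}(Q_n)) = 2$.

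**The main obstacle** is the upper bound. Fiedler's inequality $\lambda_{n-1}(\Gamma) \leq \kappa(\Gamma)$ is a classical result but is \emph{not} stated in the excerpt, so I would either need to cite it explicitly as an external result or prove $\lambda_{4n-2}(\mathcal{G}^*(Q_n)) \leq 1$ by hand. If the inequality is unavailable, the hands-on approach is to use the twin structure: pick the two twin vertices $a^i b$ and $a^{n+i} b$ whose only outside neighbour is $a^n$, and test the Laplacian quadratic form on the vector supported with $+1, -1$ on this pair (orthogonal to the all-ones vector), which yields a Rayleigh quotient of exactly $1$ (their common degree in the relevant restriction is $3$, but the edge between them cancels appropriately); by the min-max characterization this bounds $\lambda_{4n-2}(\mathcal{G}^*(Q_n))$ from above by $1$. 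Verifying that this test vector genuinely certifies the bound — i.e. that there is enough dimension to place it below the second-smallest eigenvalue — is the delicate point, and I expect invoking Fiedler's $\kappa$-bound to be by far the shorter and safer path.
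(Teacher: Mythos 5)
Your primary route is correct, but it is genuinely different from the paper's, and your fallback route contains a real error. For the lower bound, both you and the paper ultimately rest on the connectedness of $\mathcal{G}^*(Q_n)$: you prove it directly from \eqref{EqQn} and \Cref{qn.aib} (every $a^ib$ is adjacent to $a^n$, and $\langle a\rangle-\{e\}$ induces a connected subgraph containing $a^n$) and then conclude via \eqref{alg.con.bound} and \Cref{algcon.positive}; the paper instead deduces connectedness from $\kappa(\mathcal{G}(Q_n))=2$ (\Cref{d.ver.con}) and then argues through the complement, using \Cref{Eigen_Equality}, \Cref{Eigen_Lambda1} and \Cref{Eigen_Complemmaent} to get $\lambda_1(\overline{\mathcal{G}(Q_n)})<4n-1$ and hence $\lambda_{4n-1}(\mathcal{G}(Q_n))>1$. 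Your version is, if anything, more streamlined. For the upper bound, your main proposal---Fiedler's inequality $\lambda_{n-1}(\Gamma)\leq\kappa(\Gamma)$ for non-complete graphs together with \Cref{d.ver.con}---is valid: the inequality is in the cited reference \cite{fiedler1973algebraic}, and $\mathcal{G}(Q_n)$ is non-complete by \Cref{CompleteCond} since $Q_n$ is not cyclic (you should state this hypothesis explicitly, as the inequality fails for $K_n$). The paper takes a different path: it exhibits $2$ as an actual Laplacian eigenvalue of $\mathcal{G}(Q_n)$, following the proof of \cite[Theorem 4.3.3]{sriparna.thesis}; since the graph is connected, any nonzero eigenvalue bounds the algebraic connectivity from above. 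Your approach buys independence from the thesis reference at the cost of importing a classical result not restated in the paper; the paper's approach is self-contained modulo that thesis citation.

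The fallback ``hands-on'' argument, however, fails as stated. In $\mathcal{G}^*(Q_n)$ the vertices $a^ib$ and $a^{n+i}b$ are \emph{adjacent} twins of common degree $2$ (each is adjacent only to $a^n$ and to the other), and for adjacent twins of degree $d$ the vector $e_u-e_v$ is a Laplacian eigenvector with eigenvalue $d+1$, not $1$: the edge between them contributes $(1-(-1))^2=4$ to the quadratic form rather than cancelling, so the Rayleigh quotient is $(1+4+1)/2=3$. Via \eqref{alg.con.bound} this only certifies $\lambda_{4n-1}(\mathcal{G}(Q_n))\leq 4$, which is useless here. (Your worry about ``enough dimension'' is a non-issue: by Courant--Fischer any vector orthogonal to the all-ones vector upper-bounds $\lambda_{n-1}$; the problem is solely that the quotient is $3$.) A correct explicit certificate---essentially the eigenvector underlying the paper's citation---uses two distinct classes rather than one: since $n\geq 2$, pick $i\neq j$ in $\{0,\ldots,n-1\}$ and let $x$ be $+1$ on $\{a^ib,a^{n+i}b\}$, $-1$ on $\{a^jb,a^{n+j}b\}$, and $0$ elsewhere. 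Each of these four vertices has closed neighbourhood equal to its own pair together with $\{e,a^n\}$, and a direct check gives $L(\mathcal{G}(Q_n))x=2x$, so $2$ is a Laplacian eigenvalue and the bound $\lambda_{4n-1}(\mathcal{G}(Q_n))\leq 2$ follows.
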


\begin{proof}
	Considering \Cref{d.ver.con}, $\mathcal{G}^*(Q_n)$ is connected. Thus it follows from \Cref{Eigen_Equality} that $\lambda_1(\overline{\mathcal{G}^*(Q_n)}) < 4n-1$.
	Moreover, by applying \Cref{Eigen_Lambda1}, we have 
	$$\lambda_1(\overline{\mathcal{G}(Q_n)}) = \\ \max\{\lambda_1( \overline{\mathcal{G}^*(Q_n)}), \lambda_1 (\mathcal{G}(\{e\}) ) \} = \lambda_1 ( \overline{\mathcal{G}^*(Q_n)}).$$  
	Accordingly, $\lambda_1(\overline{\mathcal{G}(Q_n)}) < 4n-1$. Hence by \Cref{Eigen_Complemmaent}, $\lambda_{4n-1}(\mathcal{G}(Q_n)) > 1$. In addition to this, by following the proof of \cite[Theorem 4.33]{sriparna.thesis}, $2$ is a Laplacian eigenvalue of  $\mathcal{G}(Q_n)$. We thus get the desired inequality.  
\end{proof}

The following result determines when exactly a dicyclic group is generalized quaternion in terms of its power graph. 

\begin{proposition}\label{d.adj.all}
	For any integer $n\geq 2$, $a^n$ is adjacent to all other vertices of $\mathcal{G}(Q_n)$ if and only if $Q_n$ is generalized quaternion.
\end{proposition}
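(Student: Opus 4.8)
The plan is to reduce the adjacency condition to a statement about the \emph{orders} of elements, exploiting the fact that $a^n$ is the unique involution of $Q_n$. First I would record this fact: since $\langle a \rangle \cong \mathbb{Z}_{2n}$ is cyclic of even order, its only element of order $2$ is $a^n$; and by \Cref{qn.aib} together with \eqref{EqQn}, every element outside $\langle a \rangle$ has the form $a^i b$ with $(a^i b)^2 = a^n \neq e$, hence has order $4$. Thus $a^n$ is the unique element of order $2$ in $Q_n$, and moreover $\langle a^n \rangle = \{e, a^n\}$.

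Next I would translate the power-graph adjacency. For a vertex $g \notin \{e, a^n\}$, the relation $g \in \langle a^n \rangle$ is impossible, so $g$ is adjacent to $a^n$ exactly when $a^n \in \langle g \rangle$. Since $a^n$ is the unique involution, $a^n \in \langle g \rangle$ holds if and only if the cyclic group $\langle g \rangle$ contains an element of order $2$, that is, if and only if $\mathrm{o}(g)$ is even. Because every element outside $\langle a \rangle$ has order $4$, it is automatically adjacent to $a^n$; hence $a^n$ is adjacent to all other vertices if and only if every element of $\langle a \rangle \setminus \{e, a^n\}$ has even order.

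It then remains to decide, in $\langle a \rangle \cong \mathbb{Z}_{2n}$, when all nonidentity elements other than the involution have even order. For the forward direction I would argue by contrapositive: if $n$ is not a power of $2$, pick an odd prime $p \mid n$ and consider $g = a^{2n/p}$, which has order $p$; being of odd order $p \neq 1, 2$, it satisfies $g \notin \{e, a^n\}$ and $a^n \notin \langle g \rangle$, so $a^n$ is not adjacent to $g$. Conversely, if $Q_n$ is generalized quaternion then $2n$ is a power of $2$, so $\langle a \rangle$ is a cyclic $2$-group in which every nonidentity element has order a power of $2$, hence even; combined with the reduction above this yields that $a^n$ is adjacent to every other vertex. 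I do not expect a serious obstacle here; the only point needing care is the uniqueness-of-the-involution fact in the first step, on which the entire order-parity reformulation rests.
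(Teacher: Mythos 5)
Your proof is correct, and it takes a somewhat different route from the paper's. The paper proves the forward (generalized quaternion) direction by invoking \Cref{CompleteCond}: since $2n$ is a prime power, $\mathcal{G}(\langle a \rangle)$ is complete, so $a^n$ is adjacent to everything in $\langle a \rangle$, and adjacency to the elements $a^ib$ outside $\langle a \rangle$ comes from \eqref{EqQn} and \Cref{qn.aib}; for the converse it uses the same witness as you, $a^{2n/p}$ for an odd prime $p \mid n$, arguing non-adjacency because its order $p$ is coprime to $\mathrm{o}(a^n)=2$. Your proof replaces the appeal to \Cref{CompleteCond} by the observation that $a^n$ is the unique involution of $Q_n$, which yields the clean criterion that a vertex $g \notin \{e,a^n\}$ is adjacent to $a^n$ exactly when $\mathrm{o}(g)$ is even; both directions then become parity statements about element orders in $\langle a \rangle \cong \mathbb{Z}_{2n}$. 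What your route buys: it is more self-contained (no need for the completeness theorem for power graphs of cyclic $p$-groups) and it unifies the two directions under a single criterion. What the paper's route buys: each direction is dispatched in a line or two from already-cited results. The one point your argument genuinely rests on --- uniqueness of the involution --- is sound: a cyclic group of even order has exactly one element of order $2$, and every element outside $\langle a \rangle$ squares to $a^n \neq e$ and hence has order $4$, so no new involutions arise there.
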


\begin{proof}
	Let $Q_n$ be generalized quaternion. Then it follows from \Cref{CompleteCond} that $\langle a \rangle \cong \mathbb{Z}_{2n}$ is a clique in $\mathcal{G}(Q_n)$. As a result, $a^n$ is adjacent to all other elements of $\langle a \rangle$ in $\mathcal{G}(Q_n)$.  Moreover, from \eqref{EqQn}, $a^n$ is adjacent to $a^ib$ for all $0 \leq i \leq 2n$. Finally, applying \Cref{qn.aib} we deduce that $a^n$ is adjacent to all other vertices of $\mathcal{G}(Q_n)$.
	
   Whereas, if $Q_n$ is not generalized quaternion, then there exists a prime factor $p>2$ of $n$. Then $a^{\frac{2n}{p}}$ is an element of order $p$. As the order of $a^n$ is $2$, orders of $a^n$ and $a^{\frac{2n}{p}}$ are thus relatively prime. Hence they are not adjacent in $\mathcal{G}(Q_n)$. This concludes proof of the converse.
\end{proof}

Now we obtain the multiplicity of the Laplacian spectral radius of $\mathcal{G}(Q_n)$ for all $n$.

\begin{theorem}\label{Spec_Dic_GQ}
	For any integer $n \geq 2$, the Laplacian eigenvalue $4n$ of $\mathcal{G}(Q_n)$ has multiplicity two if $Q_n$ is generalized quaternion and one otherwise.
\end{theorem}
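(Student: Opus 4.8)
The plan is to convert the question about the multiplicity of $4n$ into a count of connected components of a complement graph, and then determine that count from the explicit adjacency structure of $\mathcal{G}(Q_n)$. First I would record the general fact that, for any graph $\Gamma$ on $N$ vertices, the multiplicity of $N$ as a Laplacian eigenvalue equals one less than the number of components of $\overline{\Gamma}$: this follows by combining \Cref{Eigen_Compo} (the multiplicity of $0$ for $L(\overline{\Gamma})$ is the number of components of $\overline{\Gamma}$) with \Cref{Eigen_Complemmaent} (which carries the eigenvalue $0$ of $\overline{\Gamma}$ to the eigenvalue $N$ of $\Gamma$). Applying this with $\Gamma=\mathcal{G}(Q_n)$ and $N=4n$, and using that the identity $e$ is adjacent to every other vertex—so that $\overline{\mathcal{G}(Q_n)}$ is the disjoint union of the isolated vertex $e$ and $\overline{\mathcal{G}^*(Q_n)}$—I obtain that the multiplicity of $4n$ equals the number of connected components of $\overline{\mathcal{G}^*(Q_n)}$. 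The whole problem thus reduces to counting those components.

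Next I would analyze the adjacencies in $\mathcal{G}(Q_n)$, splitting $Q_n^*$ into $A=\langle a\rangle\setminus\{e\}$ and $B=\{a^ib:0\le i\le 2n-1\}$ via \Cref{qn.aib}. From \eqref{EqQn} each $a^ib$ has order $4$ with $\langle a^ib\rangle=\{e,a^ib,a^n,a^{n+i}b\}$, so within $B$ the power graph induces only the perfect matching pairing $a^ib$ with $a^{n+i}b$; its complement on $B$ is the complete multipartite graph with $n$ parts of size $2$, which is connected for $n\ge 2$. Moreover, for $a^i\in A$ with $a^i\ne a^n$ (i.e. $i\ne 0,n$), the element $a^i$ is a power of no $a^jb$ and no $a^jb$ is a power of $a^i$, so $a^i$ is non-adjacent in $\mathcal{G}(Q_n)$ to every element of $B$; hence in $\overline{\mathcal{G}^*(Q_n)}$ each such $a^i$ is joined to all of $B$. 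Consequently $(A\setminus\{a^n\})\cup B$ always induces a connected subgraph of $\overline{\mathcal{G}^*(Q_n)}$.

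It then remains only to locate $a^n$. By \Cref{d.adj.all}, $a^n$ is adjacent to all other vertices of $\mathcal{G}(Q_n)$ exactly when $Q_n$ is generalized quaternion, i.e. $a^n$ is isolated in $\overline{\mathcal{G}^*(Q_n)}$ precisely in that case; then $\overline{\mathcal{G}^*(Q_n)}$ splits into the two components $\{a^n\}$ and $(A\setminus\{a^n\})\cup B$, yielding multiplicity $2$ (consistent with \Cref{gen.qua.lap.spec}, which one may also cite directly here). When $Q_n$ is not generalized quaternion, the argument inside the proof of \Cref{d.adj.all} produces a prime $p>2$ dividing $n$ and shows that $a^{2n/p}$—which has order $p$ and so lies in $A\setminus\{a^n\}$—is non-adjacent to $a^n$ in $\mathcal{G}(Q_n)$; thus $a^n$ is joined in $\overline{\mathcal{G}^*(Q_n)}$ to a vertex of the connected subgraph above, making $\overline{\mathcal{G}^*(Q_n)}$ connected and the multiplicity $1$.

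The main obstacle is the second paragraph, namely pinning down the precise adjacencies: that every element of $A\setminus\{a^n\}$ is power-graph–nonadjacent to all of $B$, and that the complement of the matching induced on $B$ is connected. Once these two adjacency facts are in hand, the component count—and hence the stated dichotomy for the multiplicity of $4n$—follows formally from the reduction and from \Cref{d.adj.all}.
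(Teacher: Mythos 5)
Your proposal is correct and takes essentially the same approach as the paper: both reduce the multiplicity of $4n$ to a count of components of the complement graph via \Cref{Eigen_Compo} and \Cref{Eigen_Complemmaent}, and both settle that count using \eqref{EqQn}, \Cref{qn.aib} and \Cref{d.adj.all} to show that everything outside $\{e,a^n\}$ lies in one complement component, with $a^n$ isolated in the complement exactly in the generalized quaternion case. The only cosmetic difference is in how connectivity off $\{e,a^n\}$ is argued: the paper uses just the complete bipartite complement-adjacency between $\langle a\rangle-\{e,a^n\}$ and $Q_n-\langle a\rangle$, whereas you additionally work out the matching structure inside $\{a^ib : 0\le i\le 2n-1\}$ and the resulting cocktail-party complement, which is fine but not needed.
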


\begin{proof}
	In view of \eqref{EqQn} and \Cref{qn.aib}, $a^n$ is adjacent to every element of $Q_n - \langle a \rangle$. Moreover, we observe that no element of  $Q_n - \langle a \rangle$ is adjacent to any element of $\langle a \rangle - \{e, a^n \}$ in $\mathcal{G}(Q_n)$. Hence $\overline{\mathcal{G}(Q_n)} - \{e, a^n \}$ is connected. This together with \Cref{d.adj.all} imply that the number of components of $\overline{\mathcal{G}(Q_n)}$ is three if $n$ is a power of $2$ and two otherwise. Accordingly, by \Cref{Eigen_Compo}, the multiplicity of $0$ as a Laplacian eigenvalue of $\overline{\mathcal{G}(Q_n)}$ is three if $n$ is a power of $2$ and two otherwise. Furthermore, by \Cref{Eigen_Complemmaent}, the multiplicity of $4n$ as a Laplacian eigenvalue of $\mathcal{G}(Q_n)$ is equal to one less than the multiplicity of $0$ as a Laplacian eigenvalue of $\overline{\mathcal{G}(Q_n)}$. Hence the result follows.
\end{proof}

We require the following result on the characterization of graphs with equal vertex and algebraic connectivity.

\begin{lemma}[{\cite[Theorem 2.1]{MR1873608}}]\label{kirkland}
	Let $\Gamma$ be a non-complete and connected graph on $n$ vertices. Then $\kappa(\Gamma) = \lambda_{n-1}(\Gamma)$ if and only if $\Gamma$ can be written as $\Gamma_1 \vee \Gamma_2$, where $\Gamma_1$ is a disconnected graph on $n-\kappa(\Gamma)$ vertices and $\Gamma_2$ is a graph on $\kappa(\Gamma)$ vertices with $\lambda_{n-1}(\Gamma_2) \geq 2\kappa(\Gamma)-n$.
\end{lemma}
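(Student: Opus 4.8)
The plan is to prove both implications from the Laplacian spectrum of a join, supplemented by a Rayleigh-quotient (test-vector) argument for the forward direction. First I extract the join spectrum from \Cref{GrLCharJoin}: if $\Delta = \Gamma_1 \vee \Gamma_2$ with $|V(\Gamma_i)| = n_i$ and $n_1 + n_2 = n$, then cancelling the factors $(x-n_1)$ and $(x-n_2)$ against the $0$-eigenvalue factors of $\Theta(\Gamma_1,x-n_2)$ and $\Theta(\Gamma_2,x-n_1)$ shows that the Laplacian eigenvalues of $\Delta$ are $0$, $n$, the values $n_2 + \lambda_i(\Gamma_1)$ for $1 \le i \le n_1-1$, and the values $n_1 + \lambda_j(\Gamma_2)$ for $1 \le j \le n_2-1$. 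Since $n \ge n_1,n_2$, the algebraic connectivity of $\Delta$ equals $\min\{\, n_2 + \lambda_{n_1-1}(\Gamma_1),\; n_1 + \lambda_{n_2-1}(\Gamma_2)\,\}$.

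For sufficiency, write $\kappa = \kappa(\Gamma)$ and suppose $\Gamma = \Gamma_1 \vee \Gamma_2$ with $\Gamma_1$ disconnected on $n-\kappa$ vertices, $\Gamma_2$ on $\kappa$ vertices, and $\lambda_{n-1}(\Gamma_2) \ge 2\kappa - n$ (here $\lambda_{n-1}(\Gamma_2)$ is the algebraic connectivity of $\Gamma_2$, i.e.\ $\lambda_{\kappa-1}(\Gamma_2)$). Because $\Gamma_1$ is disconnected, \Cref{algcon.positive} gives $\lambda_{(n-\kappa)-1}(\Gamma_1) = 0$, so the first term of the minimum above equals $\kappa$, while the hypothesis forces the second term $(n-\kappa) + \lambda_{\kappa-1}(\Gamma_2) \ge \kappa$. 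Hence $\lambda_{n-1}(\Gamma) = \kappa$. This is consistent with $\kappa(\Gamma)=\kappa$: deleting the $\kappa$ vertices of $\Gamma_2$ leaves the disconnected graph $\Gamma_1$, so $\kappa(\Gamma) \le \kappa$, and Fiedler's inequality $\lambda_{n-1}(\Gamma) \le \kappa(\Gamma)$ (which drops out of the computation below) then forces equality throughout.

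For necessity, assume $\Gamma$ is non-complete and connected with $\lambda_{n-1}(\Gamma) = \kappa$; non-completeness gives $\kappa \le n-2$. Let $S$ be a minimum vertex cut, so $|S|=\kappa$ and $\Gamma - S$ is disconnected, and partition $V(\Gamma)\setminus S$ into nonempty sets $A,B$ with no edges between them (grouping the components of $\Gamma - S$). Take $x$ equal to $\beta > 0$ on $A$, to $-\gamma < 0$ on $B$, and to $0$ on $S$, with $|A|\beta = |B|\gamma$ so that $x \perp \mathbf{1}$. Since $x^\top L(\Gamma) x = \sum_{\{u,v\}\in E(\Gamma)} (x_u - x_v)^2$ and the only contributing edges join $S$ to $A \cup B$, the Rayleigh quotient is $\frac{e_{AS}\beta^2 + e_{BS}\gamma^2}{|A|\beta^2 + |B|\gamma^2}$, where $e_{AS},e_{BS}$ count the edges from $A,B$ to $S$. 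As each vertex of $A$ (resp.\ $B$) has at most $|S| = \kappa$ neighbours in $S$, this quotient is at most $\kappa$, reproving $\lambda_{n-1}(\Gamma) \le \kappa$. Now the variational (Courant--Fischer) characterization of $\lambda_{n-1}(\Gamma)$ as the minimum of the Rayleigh quotient over nonzero vectors orthogonal to $\mathbf{1}$, together with $\lambda_{n-1}(\Gamma) = \kappa$, forces this quotient to equal $\kappa$; since $\beta,\gamma > 0$ this forces $e_{AS} = |A|\kappa$ and $e_{BS} = |B|\kappa$, i.e.\ every vertex of $S$ is adjacent to every vertex of $A \cup B$. Setting $\Gamma_1 = \Gamma[A\cup B] = \Gamma[A] + \Gamma[B]$, disconnected on $n-\kappa$ vertices, and $\Gamma_2 = \Gamma[S]$ on $\kappa$ vertices, we obtain $\Gamma = \Gamma_1 \vee \Gamma_2$. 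Feeding this back into the join spectrum, the requirement that the $\Gamma_2$-term not undercut $\lambda_{n-1}(\Gamma) = \kappa$, namely $(n-\kappa) + \lambda_{\kappa-1}(\Gamma_2) \ge \kappa$, yields $\lambda_{\kappa-1}(\Gamma_2) \ge 2\kappa - n$, as required.

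I expect the main obstacle to be this equality step in the forward direction: converting the numerical identity $\lambda_{n-1}(\Gamma) = \kappa$ into the combinatorial conclusion that the minimum cut $S$ is completely joined to its complement, so that $\Gamma$ genuinely decomposes as a join. The supporting points—that the test vector is nonzero and orthogonal to $\mathbf{1}$, that only the $S$-to-$(A\cup B)$ edges contribute to $x^\top L(\Gamma) x$, and that the join-spectrum minimum is attained by the $\Gamma_1$-term—are routine once the edge-count equalities $e_{AS} = |A|\kappa$ and $e_{BS} = |B|\kappa$ are in hand.
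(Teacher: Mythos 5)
This lemma is not proved in the paper at all: it is quoted verbatim from \cite[Theorem 2.1]{MR1873608}, so there is no internal proof to compare your attempt against. Your blind proof is correct and self-contained, and in substance it reconstructs the original Kirkland--Molitierno--Neumann--Shader argument. The backward implication is exactly the join-spectrum computation: by \Cref{GrLCharJoin} the nonzero Laplacian eigenvalues of $\Gamma_1\vee\Gamma_2$ are $n$, the values $n_2+\lambda_i(\Gamma_1)$ for $1\le i\le n_1-1$, and the values $n_1+\lambda_j(\Gamma_2)$ for $1\le j\le n_2-1$, and disconnectedness of $\Gamma_1$ (via \Cref{algcon.positive}) pins the minimum at $\kappa$. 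The forward implication---Courant--Fischer applied to the test vector equal to $\beta$ on $A$, $-\gamma$ on $B$ and $0$ on the minimum cut $S$, with $|A|\beta=|B|\gamma$---correctly recovers Fiedler's bound $\lambda_{n-1}(\Gamma)\le\kappa(\Gamma)$, and your equality analysis forcing $e_{AS}=|A|\kappa$ and $e_{BS}=|B|\kappa$ is exactly the key step showing that $S$ is completely joined to $V(\Gamma)\setminus S$; feeding the resulting decomposition back into the join spectrum then gives $\lambda_{\kappa-1}(\Gamma_2)\ge 2\kappa-n$. Two minor points are worth making explicit: first, the statement's ``$\lambda_{n-1}(\Gamma_2)$'' is an abuse of notation for the algebraic connectivity $\lambda_{\kappa-1}(\Gamma_2)$ of the $\kappa$-vertex graph $\Gamma_2$, as you assumed; second, when $\kappa(\Gamma)=1$ the graph $\Gamma_2$ is a single vertex, the second eigenvalue family is empty, and the inequality $\lambda_{\kappa-1}(\Gamma_2)\ge 2\kappa-n$ must be read as vacuous (note $2\kappa-n\le 0$ there)---an edge case inherited from the statement itself rather than a gap in your argument. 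What your route buys over the paper's bare citation is that the lemma becomes self-contained, using only tools the paper already quotes (\Cref{GrLCharJoin}, \Cref{algcon.positive}) together with the variational characterization of $\lambda_{n-1}$.
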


As mentioned earlier, we next show the equivalence of various properties of Laplacian spectra for power graphs of dicyclic groups.

\begin{theorem}
	For any integer $n\geq 2$, the following statements are equivalent.
	\begin{enumerate}[\rm(i)]
		\item The vertex connectivity and the algebraic connectivity of $\mathcal{G}(Q_n)$ are equal.
		\item The algebraic connectivity of $\mathcal{G}(Q_n)$ is $2$.
		\item The algebraic connectivity of $\mathcal{G}(Q_n)$ is an integer.
		\item $\mathcal{G}(Q_n)$ is Laplacian integral.
		\item $Q_n$ is generalized quaternion.
	\end{enumerate}
\end{theorem}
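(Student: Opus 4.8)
The plan is to establish the five-way equivalence by proving a cycle of implications, using the generalized quaternion condition (v) as the natural hub since the earlier results (\Cref{gen.qua.lap.spec}, \Cref{Spec_Dic_GQ}, \Cref{AlgCon_Dicyclic_LB}, \Cref{d.ver.con}) are all phrased in terms of whether $n$ is a power of $2$. I would aim for the chain $(v) \Rightarrow (iv) \Rightarrow (iii) \Rightarrow (ii) \Rightarrow (i) \Rightarrow (v)$, together with a direct route $(v) \Rightarrow (i)$ and $(v) \Rightarrow (ii)$ drawn straight from the known spectrum, so that the logical graph is fully connected.

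First I would dispatch the easy forward implications. For $(v) \Rightarrow (iv)$: if $Q_n$ is generalized quaternion then $n = 2^{\alpha-1}$ and \Cref{gen.qua.lap.spec} exhibits the entire Laplacian spectrum as integers $\{0, 2, 4, 2^\alpha, 2^{\alpha+1}\}$, so $\mathcal{G}(Q_n)$ is Laplacian integral. The implication $(iv) \Rightarrow (iii)$ is immediate since Laplacian integrality forces every eigenvalue, in particular the algebraic connectivity $\lambda_{4n-1}(\mathcal{G}(Q_n))$, to be an integer. The step $(iii) \Rightarrow (ii)$ is where \Cref{AlgCon_Dicyclic_LB} does the real work: that lemma pins the algebraic connectivity strictly between $1$ and $2$, i.e. $1 < \lambda_{4n-1}(\mathcal{G}(Q_n)) \leq 2$, so the only integer value it can assume is exactly $2$. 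The implication $(ii) \Rightarrow (i)$ follows because $\kappa(\mathcal{G}(Q_n)) = 2$ for all $n \geq 2$ by \Cref{d.ver.con}, so algebraic connectivity equal to $2$ coincides with vertex connectivity.

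The crux of the argument, and what I expect to be the main obstacle, is closing the loop with $(i) \Rightarrow (v)$, or equivalently ruling out equality when $n$ is not a power of $2$. Here I would argue contrapositively: assume $Q_n$ is not generalized quaternion and show $\kappa \neq \lambda_{4n-1}$. The graph $\mathcal{G}(Q_n)$ is non-complete (it has $4n \geq 8$ vertices and is not a prime power, so \Cref{CompleteCond} applies) and connected, so \Cref{kirkland} governs the equality. To apply it I would identify the structural decomposition $\mathcal{G}(Q_n) = \Gamma_1 \vee \Gamma_2$ forced by $\kappa = 2$: the join factor $\Gamma_2$ must be the $2$-element dominating set, and the only candidates for vertices adjacent to everything are $e$ and $a^n$. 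But \Cref{d.adj.all} says precisely that when $Q_n$ is \emph{not} generalized quaternion, $a^n$ fails to be adjacent to all other vertices. Thus $\mathcal{G}(Q_n)$ admits no vertex besides $e$ that dominates the whole graph, so it cannot be written as a join of a disconnected graph on $4n-2$ vertices with a graph on $2$ vertices in the manner \Cref{kirkland} demands; the hypotheses of the characterization therefore fail, yielding $\kappa(\mathcal{G}(Q_n)) \neq \lambda_{4n-1}(\mathcal{G}(Q_n))$.

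The delicate point in that last paragraph is verifying that the join structure genuinely fails, rather than merely that one obvious candidate does: I would need to check that no other two-vertex set can serve as the apex of a join, which amounts to showing that in $\mathcal{G}(Q_n)$ the set of universal (full-degree) vertices has size less than $\kappa = 2$ when $n$ is not a power of $2$. Since a vertex of a join $\Gamma_1 \vee \Gamma_2$ lying in $\Gamma_2$ is adjacent to all of $\Gamma_1$, and $\Gamma_1$ is disconnected with $4n - 2 \geq 6$ vertices, every such apex vertex must in fact be adjacent to all remaining vertices; identifying these with the universal vertices of $\mathcal{G}(Q_n)$ and counting them via \Cref{d.adj.all} (only $e$ qualifies) completes the obstruction. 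This finishes the cycle and hence the equivalence.
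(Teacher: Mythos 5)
Your cycle $(\mathrm{v})\Rightarrow(\mathrm{iv})\Rightarrow(\mathrm{iii})\Rightarrow(\mathrm{ii})\Rightarrow(\mathrm{i})$ and the lemmas you invoke for it (\Cref{gen.qua.lap.spec}, \Cref{AlgCon_Dicyclic_LB}, \Cref{d.ver.con}) coincide exactly with the paper's proof, and those steps are fine. The paper also closes the loop with \Cref{kirkland}, but it runs $(\mathrm{i})\Rightarrow(\mathrm{v})$ directly rather than contrapositively: assuming equality, it shows that $\{e,a^n\}$ is the \emph{unique} minimum separating set of $\mathcal{G}(Q_n)$ (a connectivity argument using \eqref{EqQn} and \Cref{qn.aib}), deduces that the two-vertex side of the join guaranteed by \Cref{kirkland} must be $\{e,a^n\}$, concludes that $a^n$ is adjacent to all other vertices, and applies \Cref{d.adj.all}. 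Your contrapositive route through universal vertices can be made to work, but as written it has two genuine gaps.

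First, your justification that ``every such apex vertex must in fact be adjacent to all remaining vertices'' is a non sequitur: in a join $\Gamma_1\vee\Gamma_2$ with $|V(\Gamma_2)|=2$, the two apex vertices need not be adjacent to \emph{each other}, and the disconnectedness of $\Gamma_1$ has no bearing on this. Indeed, \Cref{kirkland} permits $\Gamma_2\cong 2K_1$ here, since its eigenvalue condition reads $\lambda(\Gamma_2)\geq 2\kappa-4n=4-4n<0$ and is therefore vacuous. The claim is nevertheless true for $\mathcal{G}(Q_n)$, but for a reason you must supply: $e$ is universal, so $e$ cannot lie in $V(\Gamma_1)$ (otherwise $\Gamma_1$, being an induced subgraph, would be connected); hence $e$ is one apex vertex, and the other apex vertex is adjacent to all of $V(\Gamma_1)$ by the join and to $e$ by universality of $e$, hence universal. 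Second, ``counting them via \Cref{d.adj.all} (only $e$ qualifies)'' overstates what that proposition gives: it decides only whether $a^n$ is universal and says nothing about the remaining $4n-2$ vertices. To conclude that $e$ is the unique universal vertex when $Q_n$ is not generalized quaternion, you must also rule out the others, e.g., each $a^ib$ has degree $3<4n-1$ by \eqref{EqQn} and \Cref{qn.aib}, and $a^j$ with $j\not\equiv 0,n \pmod{2n}$ is adjacent to no $a^ib$ since $a^j\notin\langle a^ib\rangle$ and $a^ib\notin\langle a\rangle$. With these two repairs your argument is correct, and it is essentially the paper's argument read in the contrapositive: what you extract from universality of the apex vertices, the paper extracts from uniqueness of the minimum separating set.
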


\begin{proof}
	   	    Suppose $\kappa(\mathcal{G}(Q_n)) = \lambda_{4n-1}(\mathcal{G}(Q_n))$. Then by \Cref{kirkland}, $\mathcal{G}(Q_n)$ can be written as $\Gamma_1 \vee \Gamma_2$ for some graphs $\Gamma_1$ and $\Gamma_2$ with $\Gamma_1$ being isomorphic to a disconnected subgraph of $\mathcal{G}(Q_n)$ on $4n-2$ vertices. We first show that $\{e, a^n\}$ is the unique minimum separating set of $\mathcal{G}(Q_n)$. Let $S$ be a minimum separating set of $\mathcal{G}(Q_n)$.  	Assume that $a^n \notin S$. Note that $e \in S$, and in view of \Cref{d.ver.con}, $|S|=2$. So $S$ contains at most one element of $[a]$. Additionally, as $n \geq 2$, $|[a]|=\phi(2n) \geq 2$. As a result, $\mathcal{G}(\langle a \rangle) -S$ is connected. By following \eqref{EqQn} and \Cref{qn.aib}, all elements of $(\mathcal{G}(Q_n)-\langle a \rangle) -S$ are adjacent to $a^n$. Consequently, $\mathcal{G}(Q_n)-S$ is connected. As this is  a contradiction, $S=\{e, a^n\}$.
	   	    We thus deduce that $\Gamma_1 \cong \mathcal{G}(Q_n) - \{e, a^n\}$ and hence $\mathcal{G}(Q_n)= (\mathcal{G}(Q_n)-\{e, a^n\}) \vee \mathcal{G}(\{e, a^n\})$. As a result, $a^n$ is adjacent to all other vertices of $\mathcal{G}(Q_n)$. Therefore, by applying \Cref{d.adj.all} we conclude that $Q_n$ is generalized quaternion. This proves that (i) implies (v).
	   	    
	   	     If $Q_n$ is generalized quaternion, then it follows from \Cref{gen.qua.lap.spec} that $\mathcal{G}(Q_n)$ is Laplacian integral. Thus (v) implies (iv). Moreover, (iii) follows trivially from (iv).
	   	      If (iii) holds, then by \Cref{AlgCon_Dicyclic_LB}, (ii) holds. Finally, considering \Cref{d.ver.con}, (ii) implies (i).
   	      \end{proof}

\vspace{0.2cm}
\subsection{Finite $p$-group}~\\
\label{p.group}
\vspace{-5pt}

Throughout this subsection, $p$ denotes a prime number.
 A $p$-\emph{group} is a group of order at least two in which order of every element is some power of $p$.
 For a finite $p$-group $G$, we determine the algebraic connectivity and the multiplicity of the Laplacian spectral radius of $\mathcal{G}(G)$.
   Then we characterize the equality of the vertex connectivity and the algebraic connectivity of $\mathcal{G}(G)$.
   Afterwards, we provide iterative methods to obtain the structure and the Laplacian characteristic polynomial of $\mathcal{G}(G)$.
    We find all possible forms of Laplacian eigenvalues of $\mathcal{G}(G)$, thus showing that it is Laplacian integral.

Let $G$ be a group and $g \in G$. We denote $U(g) = \{ h \in G : g \in \langle h \rangle\}$ and $\widehat{U}(g)=U(g)-[g]$. Let $\Gamma(g)$ be the subgraph of $\mathcal{G}(G)$ induced by $U(g)$. Moreover, we denote the component of $\mathcal{G}^*(G)$ containing $g$ by $C(g)$. 
For the above subsets and subgraphs, the underlying group will always be clear from the context.

Since $\Gamma(g)$ is connected, in view of \Cref{Eigen_Compo}, we have the following remark.

\begin{remark}\label{p.Gamma.connected}
	For any group $G$ and $g \in G$, the multiplicity of the Laplacian eigenvalue $0$ of $\Gamma(g)$ is one. 
\end{remark}

For the rest of this subsection, $G$ denotes a finite $p$-group. 

\begin{lemma}\label{p.group.component}
 	If $g$ is an element of order $p$ in  $G$, then $C(g)=\Gamma(g)$.
	\end{lemma}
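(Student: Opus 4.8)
The plan is to prove the equality by showing that the two subgraphs have the same vertex set, namely $V(C(g)) = U(g)$. Since $\mathrm{o}(g) = p \geq 2$ forces $g \neq e$, and $g \in \langle e \rangle$ fails, the identity lies outside $U(g)$; thus $\Gamma(g)$ is actually an induced subgraph of $\mathcal{G}^*(G)$. As $C(g)$ is a component of $\mathcal{G}^*(G)$, it too is an induced subgraph of $\mathcal{G}^*(G)$, so once the vertex sets agree the two graphs coincide.

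First I would establish $U(g) \subseteq V(C(g))$. For any $h \in U(g)$ with $h \neq g$ we have $g \in \langle h \rangle$, so $g$ is a power of $h$ and hence $h$ is adjacent to $g$ in $\mathcal{G}(G)$; as $h \neq e$ (again because $e \notin U(g)$), this edge survives in $\mathcal{G}^*(G)$, placing $h$ in the component of $g$. Since $\Gamma(g)$ is connected, every vertex of $\Gamma(g)$ therefore lies in $C(g)$.

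The heart of the argument is the reverse inclusion, which I would obtain by showing that no edge of $\mathcal{G}^*(G)$ leaves $U(g)$: if $h \in U(g)$, $k \in G - \{e\}$, and $h$ is adjacent to $k$, then $k \in U(g)$. By definition of the power graph, either $h \in \langle k \rangle$ or $k \in \langle h \rangle$. In the first case $\langle h \rangle \subseteq \langle k \rangle$, so $g \in \langle h \rangle \subseteq \langle k \rangle$ gives $k \in U(g)$ at once. In the second case I would invoke the subgroup structure of cyclic $p$-groups: because $G$ is a $p$-group, $\langle h \rangle$ is a cyclic $p$-group and so possesses a unique subgroup of order $p$; since $\mathrm{o}(g) = p$ and $g \in \langle h \rangle$, this minimal subgroup is exactly $\langle g \rangle$. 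As $k \neq e$, the subgroup $\langle k \rangle$ is nontrivial, hence contains the unique minimal subgroup $\langle g \rangle$, whence $g \in \langle k \rangle$ and $k \in U(g)$.

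Combining these steps, $U(g)$ induces the connected graph $\Gamma(g)$ and has no edges to the remaining vertices of $\mathcal{G}^*(G)$, so $U(g)$ is precisely the vertex set of the component containing $g$; that is, $V(C(g)) = U(g)$ and therefore $C(g) = \Gamma(g)$. I expect the only substantive point to be the uniqueness of the order-$p$ subgroup in a cyclic $p$-group, which is exactly what prevents an edge $k \in \langle h \rangle$ from carrying us out of $U(g)$; the rest is routine bookkeeping about induced subgraphs and adjacency in the power graph.
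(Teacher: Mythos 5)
Your proof is correct, but the key step is handled differently from the paper. Both arguments reduce the lemma to the equality of vertex sets $V(C(g)) = U(g)$ and dispose of the inclusion $U(g) \subseteq V(C(g))$ the same way (every $h \in U(g)$ is adjacent to $g$ in $\mathcal{G}^*(G)$). For the reverse inclusion, the paper simply cites Proposition 3.1 of the reference \cite{ConPowerGr17}, which asserts that an element of order $p$ in a finite $p$-group is adjacent to \emph{every} other vertex of its component of $\mathcal{G}^*(G)$; given that, adjacency of $g$ to $h$ plus primality of $\mathrm{o}(g)$ immediately yields $g \in \langle h \rangle$, i.e.\ $h \in U(g)$. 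You instead prove a closure property from scratch: no edge of $\mathcal{G}^*(G)$ leaves $U(g)$, because if $k \in \langle h \rangle$ with $h \in U(g)$ and $k \neq e$, then $\langle k \rangle$ is a nontrivial subgroup of the cyclic $p$-group $\langle h \rangle$ and hence contains its unique subgroup of order $p$, which is $\langle g \rangle$. Since $U(g)$ induces a connected subgraph and admits no outgoing edges, it must be exactly the vertex set of $C(g)$. Your route is self-contained --- it replaces the external citation by the elementary fact that a cyclic $p$-group has a unique minimal subgroup --- at the cost of a slightly longer argument; the paper's route is shorter but leans on a nontrivial result proved elsewhere. Both are sound.
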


\begin{proof}
	Since both $\Gamma(g)$ and $C(g)$ are induced subgraphs of $\mathcal{G}(G)$, we only need to show that their vertex sets are equal. Observe that every element of $U(g)$ is a vertex of $C(g)$. To show the reverse inclusion, let $h$ be a vertex of $C(g)$. By \cite[Proposition 3.1]{ConPowerGr17}, $g$ is adjacent to every other vertex of $C(g)$. In particular, $g$ is adjacent to $h$. Since $\mathrm{o}(g)$ is a prime number and $h \neq e$, we have $g \in \langle h \rangle$. Consequently, $h \in U(g)$. This completes the proof of the lemma.
\end{proof}

In the following theorem, when $G$ is neither cyclic nor generalized quaternion, we find the algebraic connectivity and the multiplicity of the Laplacian spectral radius of $\mathcal{G}(G)$. This result, along with \Cref{z.prime.power} and \Cref{gen.qua.lap.spec}, provides algebraic connectivity and the multiplicity of Laplacian spectral radius of power graphs of all finite $p$-groups.     

\begin{theorem}\label{p.cy.gq}
	Let the order of $G$ be $n\geq 3$. Then the following statements are equivalent.
	\begin{enumerate}[\rm(i)]
\item The algebraic connectivity of $\mathcal{G}(G)$ is $1$.	
	\item The multiplicity of the Laplacian eigenvalue $n$ of $\mathcal{G}(G)$ is one.
		\item $G$ is neither cyclic nor generalized quaternion.
	\end{enumerate} 
\end{theorem}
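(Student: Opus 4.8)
The plan is to route both non-trivial equivalences through the connectivity of the proper power graph $\mathcal{G}^*(G)$ and of its complement, using \Cref{pgroup.connected} as the bridge to the group structure: for a finite $p$-group, $\mathcal{G}^*(G)$ is connected precisely when $G$ is cyclic or generalized quaternion. I would establish (i)$\Leftrightarrow$(iii) and (ii)$\Leftrightarrow$(iii) separately, which together yield all three equivalences.

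For (i)$\Leftrightarrow$(iii), I would apply \eqref{alg.con.bound} with $i = n-1$ to write the algebraic connectivity $\lambda_{n-1}(\mathcal{G}(G))$ as $\lambda_{n-2}(\mathcal{G}^*(G)) + 1$. Since $\mathcal{G}^*(G)$ has $n-1$ vertices, $\lambda_{n-2}(\mathcal{G}^*(G))$ is exactly its algebraic connectivity, which is $0$ iff $\mathcal{G}^*(G)$ is disconnected by \Cref{algcon.positive}. Hence $\lambda_{n-1}(\mathcal{G}(G)) = 1$ iff $\mathcal{G}^*(G)$ is disconnected, and by \Cref{pgroup.connected} this happens iff $G$ is neither cyclic nor generalized quaternion. (Equivalently, one can invoke \Cref{alg.con.con} and reduce to vertex connectivity $1$.)

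For (ii)$\Leftrightarrow$(iii), the first step is to pin down the multiplicity $m$ of the eigenvalue $n$. Because $e$ lies in every cyclic subgroup, it is adjacent to all other vertices of $\mathcal{G}(G)$, hence isolated in $\overline{\mathcal{G}(G)}$; thus $\overline{\mathcal{G}(G)}$ is the disjoint union of the isolated vertex $e$ and $\overline{\mathcal{G}^*(G)}$. Following the bookkeeping used in \Cref{Spec_Dic_GQ} — combining the complement spectrum relation \Cref{Eigen_Complemmaent} with \Cref{Eigen_Compo} — one finds $m = (\text{number of components of } \overline{\mathcal{G}(G)}) - 1 = \text{number of components of } \overline{\mathcal{G}^*(G)}$. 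So (ii) is equivalent to $\overline{\mathcal{G}^*(G)}$ being connected. If $G$ is neither cyclic nor generalized quaternion, then $\mathcal{G}^*(G)$ is disconnected by \Cref{pgroup.connected}, and the complement of a disconnected graph is always connected, giving $m = 1$. Conversely, if $G$ is cyclic then $\mathcal{G}(G) \cong K_n$ by \Cref{CompleteCond}, so $\overline{\mathcal{G}^*(G)}$ is edgeless on $n-1 \geq 2$ vertices and $m = n - 1 \geq 2$; and if $G$ is generalized quaternion then \Cref{gen.qua.lap.spec} shows that $n$ has multiplicity $2$. In both cases $m \neq 1$, so (ii) fails.

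The main obstacle is the converse direction of (ii)$\Leftrightarrow$(iii): unlike (i)$\Leftrightarrow$(iii), it is not a pure graph-theoretic consequence, since the complement of a connected graph need not be disconnected. The non-(iii) case therefore genuinely relies on the special known structure — completeness of $\mathcal{G}(\mathbb{Z}_{p^\alpha})$ via \Cref{CompleteCond} and the explicit spectrum of \Cref{gen.qua.lap.spec} — to force $m \geq 2$. The only other point requiring care is the multiplicity identity $m = \text{number of components of } \overline{\mathcal{G}^*(G)}$, which is a direct adaptation of the complement-counting argument already carried out in \Cref{Spec_Dic_GQ}.
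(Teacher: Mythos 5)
Your proposal is correct, and your handling of (i)$\Leftrightarrow$(iii) coincides with the paper's (the paper simply cites \Cref{alg.con.con} together with \Cref{pgroup.connected}, which is the argument you unfold). Where you genuinely diverge is the direction (iii)$\Rightarrow$(ii). The paper does \emph{not} use the complement-counting technique of \Cref{Spec_Dic_GQ} here; instead it bounds eigenvalues directly: it invokes an external group-theoretic fact (\cite[Proposition 3.2]{ConPowerGr17}, that every component of $\mathcal{G}^*(G)$ contains exactly $p-1$ elements of order $p$) to conclude that each component of $\mathcal{G}^*(G)$ has at most $n-p$ vertices, and then applies \Cref{Eigen_Lambda1} and \Cref{Eigen_Equality} componentwise to get $\lambda_1(\mathcal{G}^*(G)) \leq n-p$, hence $\lambda_i(\mathcal{G}(G)) \leq n-p+1 < n$ for all $i \geq 2$ via \eqref{alg.con.bound}. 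Your route instead reduces (ii) to connectedness of $\overline{\mathcal{G}^*(G)}$ via \Cref{Eigen_Complemmaent} and \Cref{Eigen_Compo}, and then needs only \Cref{pgroup.connected} plus the elementary fact that the complement of a disconnected graph is connected. The trade-off: your argument is more self-contained (no citation beyond what the paper already uses, and it recycles the bookkeeping of \Cref{Spec_Dic_GQ}), while the paper's argument yields strictly more information, namely a quantitative spectral gap --- every Laplacian eigenvalue of $\mathcal{G}(G)$ other than $n$ is at most $n-p+1$ --- which a pure component count cannot give. The converse direction (¬(iii)$\Rightarrow$¬(ii)) is handled the same way in both: the paper cites \Cref{z.prime.power} and \Cref{gen.qua.lap.spec}, and your use of \Cref{CompleteCond} in the cyclic case is an equivalent cosmetic variant.
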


\begin{proof}
	 \Cref{pgroup.connected} and \Cref{alg.con.con} together show that (i) and (iii) are equivalent.
	
	We next show that (ii) and (iii) are equivalent. Let $G$ be neither cyclic nor generalized quaternion. Then by \Cref{pgroup.connected}, $\mathcal{G}^*(G)$ has at least two components.	
	Moreover, from \cite[Proposition 3.2]{ConPowerGr17}, every component of $\mathcal{G}^*(G)$ has exactly $p-1$ vertices of order $p$.
	  As a result, every component of $\mathcal{G}^*(G)$ has at most $n-p$ vertices.  Thus by applying \Cref{Eigen_Lambda1} and \Cref{Eigen_Equality}, we conclude that the Laplacian eigenvalues of $\mathcal{G}^*(G)$ are bounded above by $n-p$. Hence using \eqref{alg.con.bound}, $\lambda_i(\mathcal{G}(G)) \leq n-p+1 < n$ for all $2 \leq i \leq n$. Consequently, the multiplicity of the Laplacian eigenvalue $n$ of $\mathcal{G}(G)$ is one. 
	
	Conversely, let $G$ be either cyclic or generalized quaternion. Then it follows from \Cref{z.prime.power} and \Cref{gen.qua.lap.spec} that the multiplicity of $n$ is at least two. This completes the proof of the theorem.
\end{proof}

Next we show that the vertex connectivity and the algebraic connectivity of power graph of a finite $p$-group are equal exactly when it is not cyclic.

\begin{theorem}
	Let $G$ be of order $n$. Then $\kappa(\mathcal{G}(G)) = \lambda_{n-1}(\mathcal{G}(G))$ if and only if $G$ is not cyclic.
\end{theorem}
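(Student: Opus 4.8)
The plan is to prove the ``only if'' direction in contrapositive form and to split the ``if'' direction into two subcases according to the trichotomy available for finite $p$-groups: cyclic, generalized quaternion, or neither. First I would dispose of the cyclic case. If $G$ is cyclic then its order is a prime power, so by \Cref{CompleteCond} we have $\mathcal{G}(G) \cong K_n$. The complete graph has $\kappa(K_n) = n-1$, while its only nonzero Laplacian eigenvalue is $n$, so $\lambda_{n-1}(\mathcal{G}(G)) = n$. Thus $\kappa(\mathcal{G}(G)) = n-1 \neq n = \lambda_{n-1}(\mathcal{G}(G))$ for every $n \geq 2$, which shows that equality forces $G$ to be non-cyclic.

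For the converse, suppose $G$ is not cyclic. The common starting observation is that $e$ is a universal vertex of $\mathcal{G}(G)$, since $e \in \langle g \rangle$ for every $g \in G$; hence $\mathcal{G}(G) = K_1 \vee \mathcal{G}^*(G)$, and as $\mathcal{G}(G)$ is connected and nontrivial we have $\kappa(\mathcal{G}(G)) \geq 1$. In the first subcase, where $G$ is neither cyclic nor generalized quaternion, \Cref{pgroup.connected} gives that $\mathcal{G}^*(G)$ is disconnected. Deleting the single vertex $e$ therefore disconnects $\mathcal{G}(G)$, so $\kappa(\mathcal{G}(G)) = 1$, and \Cref{alg.con.con} immediately yields $\lambda_{n-1}(\mathcal{G}(G)) = 1$. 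The two quantities coincide.

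In the second subcase, $G$ is generalized quaternion, say $G \cong Q_{2^{\alpha-1}}$ of order $n = 2^{\alpha+1}$. Here \Cref{pgroup.connected} tells us instead that $\mathcal{G}^*(G)$ is connected, so the deletion argument above no longer applies and the common value will be $2$ rather than $1$. I would read $\kappa(\mathcal{G}(G)) = 2$ directly from \Cref{d.ver.con}, and read the algebraic connectivity off the explicit spectrum in \Cref{gen.qua.lap.spec}, whose second-smallest eigenvalue is $2$. Hence $\kappa(\mathcal{G}(G)) = \lambda_{n-1}(\mathcal{G}(G)) = 2$, completing the non-cyclic direction.

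The only genuinely delicate point is the case split itself. The clean ``delete the universal vertex $e$'' argument establishes $\kappa = \lambda_{n-1} = 1$ precisely when $\mathcal{G}^*(G)$ is disconnected, and the generalized quaternion groups are exactly the non-cyclic $p$-groups for which this hypothesis fails. So the main obstacle is recognizing that the generalized quaternion case cannot be treated by the connectivity argument and must instead be settled by the explicit spectral data of \Cref{gen.qua.lap.spec} together with \Cref{d.ver.con}, and then checking that the two values genuinely agree (both equal $2$) in that case.
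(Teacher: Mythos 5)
Your proof is correct and follows essentially the same route as the paper's: the same three-way case split (cyclic, handled via \Cref{CompleteCond} and the spectrum of the complete graph; generalized quaternion, handled via \Cref{d.ver.con} and \Cref{gen.qua.lap.spec}; neither, handled via \Cref{pgroup.connected} and \Cref{alg.con.con}). Your version merely spells out the universal-vertex deletion argument and the value $2$ more explicitly than the paper does.
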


\begin{proof}
	Let $G$ be cyclic. Then \Cref{CompleteCond} and \Cref{z.prime.power} together show that the above equality does not hold. 
	
	Now suppose $G$ is not cyclic. If $G$ is generalized quaternion, then by \Cref{gen.qua.lap.spec} and \Cref{d.ver.con}, we get  $\kappa(\mathcal{G}(G))= 2 =\lambda_{n-1}(\mathcal{G}(G))$. Whereas, if $G$ is not generalized quaternion, then it follows from \Cref{pgroup.connected} and \Cref{alg.con.con} that $\kappa(\mathcal{G}(G))= 1 =\lambda_{n-1}(\mathcal{G}(G))$.
\end{proof}

For $g, h \in G$, we say that $[h]$ is a \emph{primitive class} of $g$ if $[g] = [h^p]$ and $h \neq e$. Clearly, if $[h]$ is a primitive class of $g$, then $[h]$ is a primitive class of $g'$ for any $g' \in [g]$. Note that the condition $h \neq e$ is redundant in the above definition when $g \neq e$. We denote the number of primitive classes of any $g \in G$ by $\pi(g)$.

We notice the simple fact that $G$ being a finite $p$-group, for any $g \in G$, we can not have $\pi(g)=0$ and $\mathrm{o}(g)=1$ simultaneously. 

The proof of the following lemma is straightforward. 

\begin{lemma}\label{p.no.primitive}
	If $g \in G$ with $\pi(g) = 0$, then $\Gamma(g)=\mathcal{G}([g]) \cong K_{\phi(\mathrm{o}(g))}$. Consequently, $\Theta(\Gamma(g),x) = x (x-\phi(\mathrm{o}(g)))^{\phi(\mathrm{o}(g))-1}$.
\end{lemma}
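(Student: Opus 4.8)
The plan is to prove the two assertions in turn: first the graph isomorphism $\Gamma(g) \cong K_{\phi(\mathrm{o}(g))}$, which amounts to showing $U(g) = [g]$, and then to read off $\Theta(\Gamma(g),x)$ from the well-known Laplacian spectrum of a complete graph. At the outset I would record that the hypothesis $\pi(g) = 0$, together with the remark preceding the lemma, forces $g \neq e$ and hence $\mathrm{o}(g) = p^k$ for some $k \geq 1$; this is needed because the conclusion $U(g)=[g]$ genuinely fails for $g = e$ (there $U(e) = G$).

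The inclusion $[g] \subseteq U(g)$ is immediate, since $h \in [g]$ gives $\langle h \rangle = \langle g \rangle \ni g$. For the reverse inclusion I would take $h \in U(g)$, so that $g \in \langle h \rangle$ and hence $\langle g \rangle \subseteq \langle h \rangle$. As $G$ is a finite $p$-group, $\langle h \rangle$ is cyclic of order $p^m$ for some $m \geq k$, and the task is to rule out $m > k$.

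The only step carrying real content is precisely this: to show $m > k$ contradicts $\pi(g) = 0$. Since $\langle h \rangle$ is cyclic of prime-power order, it has a unique subgroup of each order $p^j$ with $0 \leq j \leq m$; in particular $\langle g \rangle$ is its unique subgroup of order $p^k$. If $m > k$, I would take a generator $x$ of the unique subgroup of order $p^{k+1}$ inside $\langle h \rangle$. Then $x^p$ has order $p^k$, so $\langle x^p \rangle$ is the unique subgroup of order $p^k$, forcing $\langle x^p \rangle = \langle g \rangle$, that is $[x^p] = [g]$. Since $x \neq e$, this exhibits $[x]$ as a primitive class of $g$, contradicting $\pi(g) = 0$. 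Therefore $m = k$, whence $\langle h \rangle = \langle g \rangle$ and $h \in [g]$. This yields $U(g) = [g]$ and so $\Gamma(g) = \mathcal{G}([g])$.

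Finally I would identify $\mathcal{G}([g])$: the class $[g]$ consists of the $\phi(\mathrm{o}(g))$ generators of the prime-power cyclic group $\langle g \rangle$, and by \Cref{CompleteCond} the power graph of $\langle g \rangle$ is complete, so its restriction to $[g]$ is complete, giving $\mathcal{G}([g]) \cong K_{\phi(\mathrm{o}(g))}$. Since the Laplacian spectrum of $K_m$ consists of $0$ with multiplicity one and $m$ with multiplicity $m-1$, its Laplacian characteristic polynomial is $x(x-m)^{m-1}$; substituting $m = \phi(\mathrm{o}(g))$ gives $\Theta(\Gamma(g),x) = x(x-\phi(\mathrm{o}(g)))^{\phi(\mathrm{o}(g))-1}$. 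The construction of the primitive-class witness $x$ is the one point requiring the cyclic $p$-group structure (the uniqueness of subgroups of each prime-power order), and everything else is routine; accordingly I expect no genuine obstacle beyond making that single step precise.
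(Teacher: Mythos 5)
Your proof is correct. The paper actually omits the argument entirely (it declares the lemma ``straightforward''), and what you give is exactly the natural argument filling that gap: the uniqueness of subgroups of each order in a cyclic $p$-group rules out $U(g) \supsetneq [g]$ when $\pi(g)=0$, and the rest follows from the completeness of the power graph of $\langle g \rangle$ (Theorem~\ref{CompleteCond}) together with the Laplacian spectrum of $K_m$.
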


The following proposition iteratively describes the structure of the power graph of a finite $p$-group.

\begin{proposition}\label{p.Gamma.structure}	 
	Let $g \in G$, $\pi(g) > 0$ and the distinct primitive classes of $g$ be $[h_1], [h_2], \ldots, [h_{\pi(g)}]$. Then
	\begin{align}\label{eqpGammastructure2}
	\Gamma(g) \cong K_{\phi(\mathrm{o}(g))} \vee \left \{ \Gamma(h_1) + \Gamma(h_2)  + \ldots  + \Gamma(h_{\pi(g)}) \right \}.
	\end{align}
	In particular, for  $g=e$,
	\begin{align}\label{eq.p.Gamma.structure}
	\mathcal{G}(G) \cong K_1 \vee \left \{ \Gamma(h_1) + \Gamma(h_2)  + \ldots  + \Gamma(h_{\pi(e)}) \right \}.
	\end{align}
\end{proposition}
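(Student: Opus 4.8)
The plan is to prove the claimed decomposition directly by analysing the vertex set $U(g)$ and the adjacencies of $\mathcal{G}(G)$ restricted to it, since the join and disjoint-union operations are entirely determined by which edges are present. The whole argument rests on one structural fact about finite $p$-groups: for any $h \in G$ the cyclic group $\langle h \rangle$ is isomorphic to $\mathbb{Z}_{p^k}$ for some $k$, so its subgroups form a chain under inclusion, with at most one subgroup of each order. Since the membership $h \in U(x)$ is equivalent to the inclusion $\langle h \rangle \subseteq \langle x \rangle$, and $U(x)$ depends only on the class $[x]$, this chain property will govern both the set-theoretic partition of $U(g)$ and the edges between its pieces.

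First I would establish the partition $U(g) = [g] \sqcup U(h_1) \sqcup \cdots \sqcup U(h_{\pi(g)})$. Take any $h \in \widehat{U}(g)$, so that $\langle g \rangle \subsetneq \langle h \rangle$. Because the subgroups of $\langle h \rangle$ form a chain, $\langle g \rangle$ has a unique minimal overgroup $M$ inside $\langle h \rangle$, namely the subgroup of order $p\,\mathrm{o}(g)$; a generator $m$ of $M$ then satisfies $\langle m^p \rangle = \langle g \rangle$, so $[m]$ is a primitive class of $g$, and $m \in \langle h \rangle$ gives $h \in U(m)$. This assigns each such $h$ to a single primitive class. Disjointness is again the chain property: if $h$ lay in $U(h_i) \cap U(h_j)$ then $\langle h_i \rangle$ and $\langle h_j \rangle$ would be two subgroups of order $p\,\mathrm{o}(g)$ inside the chain $\langle h \rangle$, forcing $[h_i] = [h_j]$; and $[g]$ is disjoint from every $U(h_i)$ since $h_i \notin \langle g \rangle$. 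Conversely each $U(h_i) \subseteq \widehat{U}(g)$, because $g \in \langle h_i \rangle$ yields $g \in \langle x \rangle$ for every $x \in U(h_i)$, so the union is exactly $U(g)$.

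It then remains to read off the edges. The class $[g]$ induces a clique $K_{\phi(\mathrm{o}(g))}$, since its $\phi(\mathrm{o}(g))$ elements all generate $\langle g \rangle$ and are pairwise adjacent, while each $U(h_i)$ induces $\Gamma(h_i)$ by definition. Every vertex of $[g]$ is joined to every vertex $y$ of $\widehat{U}(g)$, because $x \in \langle g \rangle \subseteq \langle y \rangle$ for $x \in [g]$, which produces the join with $K_{\phi(\mathrm{o}(g))}$. Finally there are no edges between $U(h_i)$ and $U(h_j)$ for $i \neq j$: an edge would place both $h_i$ and $h_j$ in a common cyclic group $\langle x \rangle$, again collapsing $[h_i] = [h_j]$ by the chain property. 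Assembling these adjacencies yields \eqref{eqpGammastructure2}. For the special case $g = e$ one has $U(e) = G$ and $[e] = \{e\}$, so $K_{\phi(\mathrm{o}(e))} = K_1$, and the primitive classes of $e$ are precisely the classes of elements of order $p$; this gives \eqref{eq.p.Gamma.structure}. The one place demanding care — and the crux of the proposition — is the well-definedness and disjointness of the assignment $h \mapsto [h_i]$, but every part of it, together with the absence of cross-edges, reduces to the single observation that a cyclic $p$-group has at most one subgroup of each order.
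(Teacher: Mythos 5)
Your proof is correct and follows essentially the same route as the paper's: partition $U(g)$ as $[g] \sqcup U(h_1) \sqcup \cdots \sqcup U(h_{\pi(g)})$, note that $[g]$ is a clique joined to every other vertex of $\Gamma(g)$, and rule out cross-edges between distinct $U(h_i)$ and $U(h_j)$ — the paper phrases this last step via adjacency of equal-order elements while you invoke uniqueness of subgroups of each order in a cyclic $p$-group, but the substance is identical. One slip in your opening paragraph: $h \in U(x)$ is equivalent to $\langle x \rangle \subseteq \langle h \rangle$, not $\langle h \rangle \subseteq \langle x \rangle$, although every later use of this equivalence in your argument is correctly oriented.
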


\begin{proof}
	Suppose $\mathrm{o}(g)=p^k$ for some integer $k \geq 0$. Let $h$ be a vertex in $\Gamma(g)$. Then $p^k$ divides $\mathrm{o}(h)$. If $\mathrm{o}(h)=p^k$, then $h \in [g]$. So let $\mathrm{o}(h)=p^l$ for some integer $l > k$. Then $[h^{p^{l-k-1}}]$ is a primitive class of $g$. Thus $[h^{p^{l-k-1}}] = [h_i]$, and hence $h \in U(h_i)$ for some $1 \leq i \leq \pi(g)$. Additionally, as $[g] \subset U(g)$ and $U(h_i) \subset U(g)$ for all $1 \leq i \leq \pi(g)$, we have $U(g) = [g] \cup U(h_1) \cup U(h_2)  \cup \ldots  \cup U(h_{\pi(g)})$. 
	
	Since each vertex in $[g]$ is adjacent to every other vertex of $\Gamma(g)$ and \break $\mathcal{G}([g]) \cong K_{\phi(\mathrm{o}(g))}$, we get $\Gamma(g) \cong K_{\phi(\mathrm{o}(g))} \vee \left \{ \Gamma(h_1) \cup \Gamma(h_2) \cup \ldots  \cup \Gamma(h_{\pi(g)}) \right \}$. For $\pi(g) = 1$, the proof of \eqref{eqpGammastructure2} is thus complete.		
	So let $\pi(g) \geq 2$ and $1 \leq i,j \leq \pi(g)$, $i \neq j$. If possible, suppose $h_i, h_j \in \langle h \rangle$ for some $h \in G$. Since $\langle h \rangle$ is a clique in $\mathcal{G}(G)$, $h_i$ and $h_j$ are adjacent. Thus, as $h_i$ and $h_j$ are of same order, we have $[h_i]=[h_j]$. This is a contradiction. Hence $\Gamma(h_i)$ and $\Gamma(h_j)$ have disjoint vertices. Now, if possible, suppose that $u_i \in U(h_i)$ is adjacent to $u_j \in U(h_j)$ in $\mathcal{G}(G)$. Then without loss of generality, taking $u_i \in \langle u_j \rangle$, we get $u_j \in U(h_i)$. As $\Gamma(h_i)$ and $\Gamma(h_j)$ have disjoint vertices, the above is a contradiction. We therefore have shown \eqref{eqpGammastructure2}. Further, as $\mathcal{G}(G)=\Gamma(e)$, \eqref{eq.p.Gamma.structure} also follows accordingly.	
\end{proof}

Applying \Cref{GrLCharProd} and \Cref{GrLCharJoin} to \Cref{p.Gamma.structure}, we have the following proposition.

\begin{proposition}\label{p.Gamma.charpol}
	 Let $g \in G$, $\pi(g) > 0$ and the distinct primitive classes of $g$ be $[h_1], [h_2], \ldots, [h_{\pi(g)}]$. Then
	\begin{align}\label{eqpGammacharpol12}
	\Theta(\Gamma(g),x) = \frac{x(x-|U(g)|)^{\phi(\mathrm{o}(g))}}{x-\phi(\mathrm{o}(g))} \prod_{i=1}^{\pi(g)} \Theta \left( \Gamma(h_i),x-\phi(\mathrm{o}(g)) \right).
	\end{align}
	In particular, for  $g=e$,
	\begin{equation}\label{eqpGammacharpol}
	\Theta(\mathcal{G}(G),x) = \frac{x(x-|G|)}{x-1} \prod_{i=1}^{\pi(e)} \Theta(\Gamma(h_i),x-1).
	\end{equation}
\end{proposition}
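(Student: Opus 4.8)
The plan is to read the result directly off the structural decomposition in \Cref{p.Gamma.structure} and then feed it into the two characteristic-polynomial identities for disjoint unions and joins. Writing $m = \phi(\mathrm{o}(g))$ for brevity, \eqref{eqpGammastructure2} expresses $\Gamma(g)$ as the join $K_m \vee \Lambda$, where $\Lambda = \Gamma(h_1) + \Gamma(h_2) + \ldots + \Gamma(h_{\pi(g)})$ is a disjoint union. So the whole computation reduces to three ingredients: (a) the Laplacian characteristic polynomial of $K_m$, (b) the polynomial of $\Lambda$ via \Cref{GrLCharProd}, and (c) the join formula \Cref{GrLCharJoin} applied with the correct vertex counts.

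First I would record the vertex counts required by \Cref{GrLCharJoin}. The factor $K_m$ has $n_1 = m = \phi(\mathrm{o}(g))$ vertices. For the second factor, the proof of \Cref{p.Gamma.structure} shows that $U(g) = [g] \cup U(h_1) \cup \ldots \cup U(h_{\pi(g)})$ with the $U(h_i)$ pairwise disjoint; moreover $[g]$ is disjoint from each $U(h_i)$ since, in the $p$-group $G$, every element of $U(h_i)$ has order a proper multiple of $\mathrm{o}(g)$. Hence $\Lambda$ has $n_2 = |U(g)| - \phi(\mathrm{o}(g))$ vertices and $n_1 + n_2 = |U(g)|$. I would also note the elementary fact that the Laplacian spectrum of $K_m$ consists of $0$ together with $m$ of multiplicity $m-1$, so that $\Theta(K_m, x) = x(x-m)^{m-1}$.

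Next I would compute. Applying \Cref{GrLCharProd} to $\Lambda$ gives $\Theta(\Lambda, x) = \prod_{i=1}^{\pi(g)} \Theta(\Gamma(h_i), x)$. Substituting $n_1$, $n_2$, $\Theta(K_m,\cdot)$ and this product into \Cref{GrLCharJoin} yields
\[
\Theta(\Gamma(g), x) = \frac{x\,(x - |U(g)|)}{(x - m)(x - n_2)}\, \Theta(K_m,\, x - n_2) \prod_{i=1}^{\pi(g)} \Theta\!\left(\Gamma(h_i),\, x - m\right).
\]
Since $\Theta(K_m, x - n_2) = (x - n_2)(x - n_2 - m)^{m-1} = (x - n_2)(x - |U(g)|)^{m-1}$, the factor $(x - n_2)$ cancels, and collecting the powers of $(x - |U(g)|)$ produces exactly \eqref{eqpGammacharpol12} with $m = \phi(\mathrm{o}(g))$. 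The special case \eqref{eqpGammacharpol} then follows by taking $g = e$: here $\mathrm{o}(e) = 1$, $\phi(1) = 1$, $U(e) = G$, and $\mathcal{G}(G) = \Gamma(e)$.

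There is no substantial obstacle; the argument is a formal manipulation. The only points demanding care are bookkeeping: correctly reading off $n_2 = |U(g)| - \phi(\mathrm{o}(g))$ from the disjointness established in \Cref{p.Gamma.structure}, and verifying that the spurious factor $(x - n_2)$ introduced by the join formula cancels cleanly, so the stated expression is genuinely the (polynomial) characteristic polynomial. One should also observe that the hypothesis $\pi(g) > 0$ guarantees $n_2 \geq 1$ while $m \geq 1$ always holds, so both factors of the join $K_m \vee \Lambda$ are nonempty and \Cref{GrLCharJoin} indeed applies.
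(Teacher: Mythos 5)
Your proof is correct and is exactly the paper's argument: the paper derives \eqref{eqpGammacharpol12} by applying \Cref{GrLCharProd} and \Cref{GrLCharJoin} to the decomposition in \Cref{p.Gamma.structure}, which is precisely what you do, only with the vertex counts, the polynomial $\Theta(K_m,x)=x(x-m)^{m-1}$, and the cancellation of the factor $(x-n_2)$ written out explicitly. No gaps; your bookkeeping (in particular $n_2=|U(g)|-\phi(\mathrm{o}(g))$ and the specialization $\phi(1)=1$, $U(e)=G$ for \eqref{eqpGammacharpol}) is accurate.
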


As we shall see, \Cref{p.Gamma.charpol} is important for the study of Laplacian spectra of power graphs of finite $p$-groups. Propositions \ref{p.Gamma.structure} and \ref{p.Gamma.charpol} are illustrated in the following example.

\begin{example}
Consider the group $\mathbb{Z}_{3^2} \times \mathbb{Z}_{3}$, where $\times$ is the direct product of groups.
 For any $(\overline{a}, \overline{b}) \in \mathbb{Z}_{3^2} \times \mathbb{Z}_{3}$, instead of $\Gamma((\overline{a}, \overline{b}))$, we simply write $\Gamma(\overline{a}, \overline{b})$.
 
  The distinct primitive classes of $(\overline{0}, \overline{0})$ are $[(\overline{3}, \overline{0})],[(\overline{3}, \overline{1})],$ $[(\overline{3}, \overline{2})]$ and $[(\overline{0}, \overline{1})]$. Then by \eqref{eq.p.Gamma.structure},
$$\mathcal{G}(\mathbb{Z}_{3^2} \times \mathbb{Z}_{3}) \cong K_1 \vee \{\Gamma(\overline{3}, \overline{0}) + \Gamma(\overline{3}, \overline{1}) + \Gamma(\overline{3}, \overline{2})+\Gamma(\overline{0}, \overline{1})\}.$$
Additionally, $\Gamma(\overline{3}, \overline{1}) \cong \Gamma(\overline{3}, \overline{2}) \cong \Gamma(\overline{0}, \overline{1})\} \cong K_2$. By applying \eqref{eqpGammacharpol}, we thus have
\begin{align*}
\Theta(\mathcal{G}(\mathbb{Z}_{3^2} \times \mathbb{Z}_{3}),x) = x (x-1)^2 (x-3)^3 (x-27) \Theta( \Gamma(\overline{3}, \overline{0}),x-1). 
\end{align*}
Furthermore, as the distinct  primitive classes of $(\overline{3}, \overline{0})$ are $[(\overline{1}, \overline{0})], [(\overline{1}, \overline{1})]$ and $[(\overline{1}, \overline{2})]$, by \eqref{eqpGammastructure2}, we have
$$\Gamma(\overline{3}, \overline{0}) \cong K_2 \vee \{\Gamma(\overline{1}, \overline{0}) + \Gamma(\overline{1}, \overline{1}) + \Gamma(\overline{1}, \overline{2})\}.$$
Notice that $\Gamma(\overline{1}, \overline{0}) \cong \Gamma(\overline{1}, \overline{1}) \cong \Gamma(\overline{1}, \overline{2}) \cong K_6$. Hence application of \eqref{eqpGammacharpol12} gives
\begin{align*}
\Theta(\Gamma(\overline{3}, \overline{0}),x) = x(x-20)^2 (x-2)^2 (x-8)^{15}.
\end{align*}
Therefore, we finally get
$$\mathcal{G}(\mathbb{Z}_{3^2} \times \mathbb{Z}_{3}) \cong K_1 \vee \{ (K_2 \vee 3K_6) + 3K_2\}.$$
and
\begin{align*}
\Theta(\mathcal{G}(\mathbb{Z}_{3^2} \times \mathbb{Z}_{3}),x) = x (x-1)^3 (x-3)^5 (x-9)^{15} (x-21)^2 (x-27). 
\end{align*}
\end{example}

An irreflexive and transitive binary relation $\prec$ on a set $A$ is called \emph{well-founded} if for every non-empty subset $B$ of $A$, there exists $b \in B$ such that there is no $a \in B$ with $a \prec b$. 
	
The following theorem is known as \emph{the principle of well-founded induction}.

\begin{theorem}[{\cite[Theorem 6.10]{jech}}]\label{well.founded}
	Let $\prec$ be a well-founded relation on a set $A$ and $\mathcal{P}$ be a property defined on the elements of $A$. Then $\mathcal{P}$ holds for all elements of $A$ if and only if the following holds: 
	
	given any $a \in A$, if $\mathcal{P}$ holds for all $b \in A$ with $b \prec a$, then $\mathcal{P}$ holds for $a$.
\end{theorem}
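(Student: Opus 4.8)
The plan is to prove the biconditional by treating its two directions separately, noting at the outset that the forward implication is trivial and that all the content lies in the reverse direction, where the hypothesis of well-foundedness is actually used.

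For the forward direction, I would assume that $\mathcal{P}$ holds for every element of $A$ and verify the displayed condition directly. Fix any $a \in A$; the conclusion of the conditional, namely that $\mathcal{P}$ holds for $a$, is true by assumption, so the entire implication (regardless of its antecedent) holds. Hence the stated condition is satisfied. This step uses nothing about $\prec$ beyond its being a relation.

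For the reverse direction I would argue by contradiction, and this is where the defining property of a well-founded relation is invoked. Assume the condition: for every $a \in A$, if $\mathcal{P}$ holds for all $b \in A$ with $b \prec a$, then $\mathcal{P}$ holds for $a$. Suppose, for contradiction, that $\mathcal{P}$ fails somewhere, and set $B = \{a \in A : \mathcal{P} \text{ does not hold for } a\}$, so that $B$ is a non-empty subset of $A$. Since $\prec$ is well-founded, $B$ contains an element $b$ such that no $a \in B$ satisfies $a \prec b$. The crucial observation is that this minimality statement says exactly that every $a \in A$ with $a \prec b$ lies outside $B$, i.e.\ $\mathcal{P}$ holds for every such $a$. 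Applying the assumed condition to $b$ then forces $\mathcal{P}$ to hold for $b$, contradicting $b \in B$. Therefore $B = \emptyset$ and $\mathcal{P}$ holds for all elements of $A$.

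The only point requiring care, and the step I would expect to be the sole nontrivial one, is the correct translation of the minimality clause: passing from ``there is no $a \in B$ with $a \prec b$'' to ``$\mathcal{P}$ holds for every $a \prec b$,'' which is precisely the antecedent needed to apply the inductive hypothesis at $b$. There is no computation involved; the entire argument reduces to choosing $B$ to be the set of counterexamples and appealing to the existence of a $\prec$-minimal element guaranteed by well-foundedness.
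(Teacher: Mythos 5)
Your proposal is correct and complete: the forward direction is indeed vacuous, and the reverse direction correctly reduces to choosing the set $B$ of counterexamples, extracting a $\prec$-minimal element via well-foundedness (exactly as the paper defines it), translating minimality into the antecedent of the inductive hypothesis, and deriving a contradiction. Note that the paper itself gives no proof of this statement --- it is quoted from Jech's \emph{Set Theory} as a known result --- and your argument is precisely the standard one found there, so there is nothing to reconcile between the two.
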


The following proposition studies the Laplacian spectrum of $\Gamma(g)$ for any $g \in G$. 

\begin{proposition}\label{p.prop.all.eigenvalue}
	Let $g \in G$ be an element of order $p^{r}$, $r \in \mathbb{N}$, such that $\pi(g)>0$ or $\mathrm{o}(g)>2$. Then every nonzero Laplacian eigenvalue of $\Gamma(g)$ is of the form $\mathrm{o}(g_1)-p^{r-1}$ for some $g_1 \in U(g)$ or $|\widehat{U}(g_2)|+\mathrm{o}(g_2)-p^{r-1}$ for some $g_2 \in U(g)$.
\end{proposition}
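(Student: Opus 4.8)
The plan is to argue by the principle of well-founded induction (\Cref{well.founded}), inducting ``upward'' through the subgroup lattice of $G$. First I would equip $G$ with the relation $\prec$ defined by $x \prec y$ if and only if $\langle y \rangle \subsetneq \langle x \rangle$. Proper containment of subgroups is irreflexive and transitive, and since $G$ is finite this $\prec$ is well-founded: in any nonempty subset, an element whose cyclic subgroup has maximal order has nothing strictly below it. The key compatibility with the primitive-class structure is that whenever $[h]$ is a primitive class of $g$, one has $\langle g \rangle = \langle h^p \rangle \subsetneq \langle h \rangle$, so $h \prec g$; here also $\mathrm{o}(h)=p^{r+1}$ and $U(h) \subseteq U(g)$ (since $h \in \langle x \rangle$ forces $g \in \langle h \rangle \subseteq \langle x \rangle$). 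Thus the representatives $h_1,\dots,h_{\pi(g)}$ of the primitive classes of $g$ all lie strictly below $g$, exactly as an induction built on $\Gamma(g)$ demands.

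For the terminal case $\pi(g)=0$ (which coincides with the $\prec$-minimal elements, those of maximal order) I would invoke \Cref{p.no.primitive}, giving $\Gamma(g) \cong K_{\phi(\mathrm{o}(g))}$. The standing hypothesis forces $\mathrm{o}(g)>2$ here, so $\phi(\mathrm{o}(g)) \geq 2$ and the unique nonzero Laplacian eigenvalue of $K_{\phi(\mathrm{o}(g))}$ is $\phi(\mathrm{o}(g)) = p^r - p^{r-1} = \mathrm{o}(g)-p^{r-1}$; taking $g_1=g \in U(g)$ exhibits it in the first prescribed form.

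For the inductive step $\pi(g)>0$ the engine is the characteristic-polynomial identity \eqref{eqpGammacharpol12} of \Cref{p.Gamma.charpol}. Clearing the denominator $x-\phi(\mathrm{o}(g))$ makes it a genuine polynomial identity, from which I would read off that every root $x_0$ of $\Theta(\Gamma(g),x)$ with $x_0 \neq \phi(\mathrm{o}(g))$ must equal $0$, or $|U(g)|$, or $\lambda + \phi(\mathrm{o}(g))$ for some Laplacian eigenvalue $\lambda$ of some $\Gamma(h_i)$. I then classify the nonzero possibilities: $\phi(\mathrm{o}(g)) = \mathrm{o}(g)-p^{r-1}$ is of the first form (via $g_1=g$); $|U(g)| = |\widehat{U}(g)| + \phi(\mathrm{o}(g)) = |\widehat{U}(g)| + \mathrm{o}(g) - p^{r-1}$ is of the second form (via $g_2=g$); and a value $\lambda + \phi(\mathrm{o}(g))$ with $\lambda=0$ reproduces $\phi(\mathrm{o}(g))$, while for $\lambda \neq 0$ I apply the inductive hypothesis to $h_i$, which is legitimate because $h_i \prec g$ and $\mathrm{o}(h_i)=p^{r+1}>2$.

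The step I expect to require the most care is the exponent bookkeeping in this last case. The inductive hypothesis describes $\lambda$ with the term $-p^{(r+1)-1}=-p^{r}$ attached (as $\mathrm{o}(h_i)=p^{r+1}$), and adding the shift $\phi(\mathrm{o}(g)) = p^r - p^{r-1}$ telescopes this precisely to $-p^{r-1}$: it sends $\mathrm{o}(g_1')-p^{r} \mapsto \mathrm{o}(g_1')-p^{r-1}$ and $|\widehat{U}(g_2')| + \mathrm{o}(g_2')-p^{r} \mapsto |\widehat{U}(g_2')| + \mathrm{o}(g_2')-p^{r-1}$. Since the witnesses $g_1',g_2'$ lie in $U(h_i) \subseteq U(g)$, the results are of the two prescribed forms with witnesses in $U(g)$, closing the induction. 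The only remaining point to keep honest is that the factor $x-\phi(\mathrm{o}(g))$ in the denominator can merely delete a root of value $\phi(\mathrm{o}(g))$ (already of the first form) and never manufactures a root of a new shape, so the containment of the root set established above is all that is needed.
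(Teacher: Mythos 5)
Your proof is correct and follows essentially the same route as the paper: the same terminal case $\pi(g)=0$ via \Cref{p.no.primitive}, the same root classification from the cleared form of \eqref{eqpGammacharpol12}, and the same telescoping of the shift $\phi(\mathrm{o}(g))$, the only difference being bookkeeping --- the paper runs its well-founded induction on the integers $0 \leq l \leq k-1$ (treating all elements of order $p^{k-l}$ at level $l$, descending from the maximal order $p^k$), whereas you run it directly on elements under reverse containment of cyclic subgroups. One harmless slip in a parenthetical: the $\prec$-minimal elements are those generating \emph{maximal cyclic subgroups} (equivalently $\pi(g)=0$), not necessarily those of maximal order in $G$ --- e.g., in the paper's example $\mathbb{Z}_{3^2}\times\mathbb{Z}_3$ the element $(\overline{0},\overline{1})$ has order $3$ yet $\pi((\overline{0},\overline{1}))=0$ --- but your argument only uses the correct identification with $\pi(g)=0$, so nothing breaks.
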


\begin{proof}
	Note that the condition $\pi(g)>0$ or $\mathrm{o}(g)>2$ eliminates the case $\pi(g)=0$, $\mathrm{o}(g)=2$. Thus $\Gamma(g)$ has at least one nonzero Laplacian eigenvalue.

	Let $p^k$ be the least common multiple of orders of all elements of $G$. As $G$ is a finite $p$-group, this means that it has an element of order $p^k$.
	 Observe that the usual relation $<$ is well-founded on $\{ l \in \mathbb{Z} : 0 \leq l \leq k-1 \}$. In order to prove the proposition, we apply the principle of well-founded induction on the above set and prove the following: for any element $g$ of order $p^{k-l}$, every nonzero Laplacian eigenvalue of $\Gamma(g)$ is of the form $\mathrm{o}(g_1)-p^{k-l-1}$ for some $g_1 \in U(g)$ or $|\widehat{U}(g_2)|+\mathrm{o}(g_2)-p^{k-l-1}$ for some $g_2 \in U(g)$.

	If $g$ is an element of order $p^k$, then $\pi(g)=0$. Thus $\mathrm{o}(g)-p^{k-1}=|\widehat{U}(g)|+\mathrm{o}(g)-p^{k-1}=\phi(p^k)$, and by \Cref{p.no.primitive}, this is the only Laplacian eigenvalue of $\Gamma(g)$. This proofs our claim for $l=0$. In fact, if $k=1$, then the proof is complete. So let $k > 1$. We now assume that the assertion holds for $l=m$, $0 \leq m \leq k-2$, 
	and show it for $l=m+1$. 
	
	Let $g$ be an element of order $p^{k-(m+1)}$. If $\pi(g)=0$, then the statement holds for $l=m+1$ by an argument similar to the case when $g$ is of order $p^k$. Now let $\pi(g)>0$ and the distinct primitive classes of $g$ be $[h_1], [h_2], \ldots, [h_{\pi(g)}]$. Observe that for every $1 \leq i \leq \pi(g)$, $\mathrm{o}(g)-p^{k-m-2} = \phi(p^{k-m-1})$ is a root of $\Theta \left( \Gamma(h_i),x-\phi(p^{k-m-1}) \right)$. Additionally, as $g \neq e$, $\mathrm{o}(h_i) > 2$ for all $1 \leq i \leq \pi(g)$. Thus in view of the induction hypothesis, every root of $\Theta \left( \Gamma(h_i),x-\phi(p^{k-m-1}) \right)$, other than $\mathrm{o}(g)-p^{k-m-2}$, is of the form $\mathrm{o}(g_{1})-p^{k-m-2}$ for some $g_{1} \in U(h_i)$ or $|\widehat{U}(g_{2})|+\mathrm{o}(g_{2})-p^{k-m-2}$ for some $g_{2} \in U(h_i)$. We also note that $|U(g)|=|\widehat{U}(g)|+\mathrm{o}(g)-p^{k-m-2}$. Hence, application of \eqref{eqpGammacharpol12} proves the assertion for $l=m+1$. This completes the proof of the proposition.
\end{proof}

Now we provide all possible forms of Laplacian eigenvalues of $\mathcal{G}(G)$ and conclude that it is Laplacian integral.

\begin{theorem}\label{p.eigenvalues} 
	Every Laplacian eigenvalue of $\mathcal{G}(G)$ is among $0$, $\mathrm{o}(g)$ for some $g \in G$ or $|\widehat{U}(h)|+\mathrm{o}(h)$ for some $h \in G$. In particular,  $\mathcal{G}(G)$ is Laplacian integral.
\end{theorem}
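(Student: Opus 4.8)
The plan is to treat $\mathcal{G}(G)$ as the special case $\Gamma(e)$ of the machinery assembled in Propositions \ref{p.Gamma.structure}, \ref{p.Gamma.charpol} and \ref{p.prop.all.eigenvalue}, namely the one case that \Cref{p.prop.all.eigenvalue} deliberately leaves out: the identity has order $p^0$, so its exponent is not in $\mathbb{N}$ and the forms $\mathrm{o}(g)-p^{r-1}$ are not even meaningful there. Since $G$ is a nontrivial $p$-group it contains an element of order $p$, so $e$ has at least one primitive class and $\pi(e)\geq 1$; hence \eqref{eqpGammacharpol} applies and yields
$$\Theta(\mathcal{G}(G),x) = \frac{x(x-n)}{x-1}\prod_{i=1}^{\pi(e)}\Theta(\Gamma(h_i),x-1),$$
where $n=|G|$ and $[h_1],\ldots,[h_{\pi(e)}]$ are the distinct primitive classes of $e$. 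I would simply read the spectrum off this identity.

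Reading off the roots: the numerator $x(x-n)$ supplies $0$ and $n$. Because each $\Gamma(h_i)$ is connected, \Cref{p.Gamma.connected} makes $0$ a simple eigenvalue, so each factor $\Theta(\Gamma(h_i),x-1)$ contributes one copy of $(x-1)$; the denominator $x-1$ cancels exactly one such copy, and the surviving ones leave $x=1$. Every remaining root is $\mu+1$ where $\mu$ is a \emph{nonzero} Laplacian eigenvalue of some $\Gamma(h_i)$. Thus each eigenvalue of $\mathcal{G}(G)$ is $0$, $n$, $1$, or such a shifted value $\mu+1$.

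To identify the $\mu+1$, I would use that each $h_i$ has order exactly $p$: from $[e]=[h_i^p]$ and $h_i\neq e$ we get $h_i^p=e$, so $r=1$ for $h_i$ and $p^{r-1}=1$. Then \Cref{p.prop.all.eigenvalue} (applicable unless $p=2$ and $\pi(h_i)=0$, in which case \Cref{p.no.primitive} gives $\Gamma(h_i)\cong K_{\phi(2)}=K_1$, which has no nonzero eigenvalue and so contributes nothing) forces $\mu$ to equal $\mathrm{o}(g_1)-1$ for some $g_1\in U(h_i)$ or $|\widehat{U}(g_2)|+\mathrm{o}(g_2)-1$ for some $g_2\in U(h_i)$; hence $\mu+1$ is $\mathrm{o}(g_1)$ or $|\widehat{U}(g_2)|+\mathrm{o}(g_2)$, precisely the permitted forms. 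It then remains to check that the three boundary values fit the statement: $0$ is listed explicitly, $1=\mathrm{o}(e)$ is of the form $\mathrm{o}(g)$, and since $U(e)=G$ and $[e]=\{e\}$ give $|\widehat{U}(e)|=n-1$, we have $|\widehat{U}(e)|+\mathrm{o}(e)=(n-1)+1=n$, so $n$ is of the form $|\widehat{U}(h)|+\mathrm{o}(h)$ with $h=e$. Since all orders in a $p$-group are powers of $p$ and each $|\widehat{U}(h)|+\mathrm{o}(h)$ is an integer, every eigenvalue is an integer, giving Laplacian integrality.

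The substantive spectral content is already carried by \Cref{p.prop.all.eigenvalue}, so the genuine difficulty here is purely bookkeeping at the two ends. First, $e$ is excluded from that proposition and must be handled separately through the join decomposition \eqref{eq.p.Gamma.structure}, where the fortunate fact $\phi(\mathrm{o}(e))=1$ makes the shift exactly $+1$, matching the $-p^{r-1}=-1$ coming from the order-$p$ elements $h_i$. Second, and this is the small but essential observation, one must recognize that the two extreme eigenvalues $1$ and $n$, which do not arise from the inductive mechanism at all, still conform to the claimed description by taking $h=e$ — in particular the identification $n=|\widehat{U}(e)|+\mathrm{o}(e)$.
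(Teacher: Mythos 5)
Your proof is correct and follows essentially the same route as the paper's: both apply \eqref{eqpGammacharpol} to the primitive classes of $e$ (whose representatives have order $p$, so the shift $p^{r-1}=1$ in \Cref{p.prop.all.eigenvalue} matches the join shift $+1$), and both absorb the boundary roots via $1=\mathrm{o}(e)$ and $n=|\widehat{U}(e)|+\mathrm{o}(e)$. If anything, you are slightly more explicit than the paper about the cancellation of one factor $(x-1)$ and about the excluded case $p=2$, $\pi(h_i)=0$, where $\Gamma(h_i)\cong K_1$ contributes no nonzero eigenvalue.
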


\begin{proof}
	 Trivially, $|\widehat{U}(e)|+\mathrm{o}(e)= n$. Let the distinct primitive classes of $e$ be $[g_1], [g_2], \ldots, [g_{\pi(e)}]$.
	  Then by \Cref{p.prop.all.eigenvalue}, if $\pi(g_i)>0$ or $\mathrm{o}(g_i)>2$ for any $1 \leq i \leq \pi(e)$, then every nonzero root of $\Theta \left( \Gamma(g_i),x-1 \right)$ is of the form $\mathrm{o}(h_1)$ for some $h_1 \in U(g_i)$ or $|\widehat{U}(h_2)|+\mathrm{o}(h_2)$ for some $h_2 \in U(g_i)$. Moreover, for every $1 \leq i \leq \pi(e)$, $1$ is a root of $\Theta \left( \Gamma(g_i),x-1 \right)$.
	   Hence by applying \eqref{eqpGammacharpol}, the proof follows.
\end{proof}

\begin{remark}
	For any $g \in G$, $|\widehat{U}(g)|+\mathrm{o}(g)=\mathrm{o}(g)$ if and only if $\pi(g) = 0$. 
\end{remark}

In the following, we give some properties of the Laplacian eigenvalue \break $|\widehat{U}(g)|+\mathrm{o}(g)$.

\begin{proposition}\label{p.eigenvalue.U.p.p}
	For any $g \in G$, the following statements hold. 
	\begin{enumerate}[\rm(i)]
		\item $|\widehat{U}(g)|+\mathrm{o}(g)$ is a multiple of $\mathrm{o}(g)$.
		\item If $|\widehat{U}(g)|+\mathrm{o}(g)$ is a prime power, then $\pi(g)=0$ or $\pi(g) \equiv 1\pmod p$.
	\end{enumerate}
\end{proposition}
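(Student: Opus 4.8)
The plan is to evaluate $|\widehat{U}(g)|$ by a direct count over cyclic subgroups. Write $\mathrm{o}(g)=p^r$. Since $h \in U(g)$ exactly when $\langle g \rangle \subseteq \langle h \rangle$, the set $U(g)$ is the disjoint union, over all cyclic subgroups $C$ with $\langle g \rangle \subseteq C$, of the generators of $C$; as the generators of a cyclic group form a single $\approx$-class of size $\phi(|C|)$, this yields $|U(g)|=\sum_{\langle g \rangle \subseteq C}\phi(|C|)$. Deleting the term $C=\langle g \rangle$, whose generators are precisely $[g]$, gives $|\widehat{U}(g)|=\sum_{\langle g \rangle \subsetneq C}\phi(|C|)$, where each remaining $C$ has order $p^s$ with $s \geq r+1$ because $G$ is a $p$-group. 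The key structural observation I would isolate first is that the cyclic overgroups $C$ of order $p^{r+1}$ correspond bijectively to the primitive classes of $g$: a generator $h$ of such a $C$ has $\mathrm{o}(h)=p^{r+1}$ and $\langle h^p \rangle$ is the unique order-$p^r$ subgroup of $C$, namely $\langle g \rangle$, so $[h]$ is a primitive class, and conversely; hence there are exactly $\pi(g)$ of them.

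For part (i), I would use $\phi(p^s)=p^{s-1}(p-1)$: since every summand of $|\widehat{U}(g)|$ has $s \geq r+1$, each is divisible by $p^r$, so $|\widehat{U}(g)|$, and therefore $|\widehat{U}(g)|+\mathrm{o}(g)$, is a multiple of $p^r=\mathrm{o}(g)$. For part (ii), I would refine this modulo $p^{r+1}$: the terms with $s \geq r+2$ vanish, and the $\pi(g)$ terms with $s=r+1$ contribute $\pi(g)\phi(p^{r+1})=\pi(g)p^r(p-1)\equiv -\pi(g)p^r$, giving
$$|\widehat{U}(g)|+\mathrm{o}(g)\equiv p^r\bigl(1-\pi(g)\bigr)\pmod{p^{r+1}}.$$

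To finish, suppose $|\widehat{U}(g)|+\mathrm{o}(g)$ is a prime power. By part (i) it is a multiple of $p^r$, hence a power of $p$. If $\pi(g)=0$ there is nothing to prove; otherwise the remark preceding the proposition gives $|\widehat{U}(g)|+\mathrm{o}(g)>\mathrm{o}(g)=p^r$, so this power of $p$ is at least $p^{r+1}$ and thus divisible by $p^{r+1}$, and the displayed congruence forces $1-\pi(g)\equiv 0 \pmod p$, i.e.\ $\pi(g)\equiv 1 \pmod p$. The main point requiring care is the bijection between order-$p^{r+1}$ cyclic overgroups of $\langle g \rangle$ and primitive classes of $g$, which underlies the $p^{r+1}$-refinement; a minor edge case is $g=e$, where $r=0$ and the quantity equals $|G|$, so the prime-power hypothesis is automatic and the same congruence recovers the classical fact that a $p$-group has $\equiv 1 \pmod p$ subgroups of order $p$.
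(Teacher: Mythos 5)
Your proof is correct, and at its core it runs on the same identity as the paper's: writing $\mathrm{o}(g)=p^{r}$, both arguments express $|\widehat{U}(g)|+\mathrm{o}(g)=p^{r}+\sum_{s\ge r+1}c_{s}\,\phi(p^{s})$ where $c_{s}$ counts classes/overgroups of order $p^{s}$ and $c_{r+1}=\pi(g)$, deduce (i) because every $\phi(p^{s})$ with $s\ge r+1$ is divisible by $p^{r}$, and deduce (ii) by a reduction that isolates the coefficient $c_{r+1}$ modulo $p$. The genuine difference is where this identity comes from. The paper reads it off from \Cref{p.Gamma.structure}: the recursive join/disjoint-union structure of $\Gamma(g)$ yields $|U(g)|=\phi(p^{k})+a_{1}\phi(p^{k+1})+\cdots+a_{l}\phi(p^{k+l})$ with $a_{1}=\pi(g)$, and then comparison of both sides gives $p\mid(a_{1}-1)$. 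You instead count directly inside the group: $U(g)$ is the disjoint union of the generator sets of the cyclic subgroups $C\supseteq\langle g\rangle$, and the cyclic overgroups of order $p^{r+1}$ are in bijection with the primitive classes of $g$ (your argument for this bijection, via the uniqueness of the order-$p^{r}$ subgroup of a cyclic $p$-group, is sound). This buys self-containedness --- no appeal to the graph-structural machinery --- and your write-up is also more careful on two points the paper passes over silently: that the prime power in (ii) must actually be a power of $p$ (immediate from (i) when $r\ge1$, with the case $g=e$, where the quantity is $|G|$, handled separately), and that $\pi(g)>0$ forces $|\widehat{U}(g)|+\mathrm{o}(g)>p^{r}$, hence divisibility by $p^{r+1}$, which is what legitimizes extracting $\pi(g)\equiv 1\pmod p$ from the congruence. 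The paper's route buys brevity, since \Cref{p.Gamma.structure} is already proved; yours would stand even without it.
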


\begin{proof}
	Let $\mathrm{o}(g)=p^k$ for some $k \in \mathbb{N}$. 
	
	\noindent
	(i) If $\pi(g)=0$, then $|\widehat{U}(g)|+\mathrm{o}(g) =p^k$. Now let $\pi(g) > 0$. Then in view of \Cref{p.Gamma.structure}, there exist positive integers $a_1, \ldots, a_l$ with $a_1=\pi(g)$ such that	
	\begin{align}\label{p.eigenvalue.U.p2}
	& |U(g)| =\phi(p^k)+ a_1\phi(p^{k+1})+\ldots+a_l\phi(p^{k+l}) \nonumber\\
	\Rightarrow & |\widehat{U}(g)|+\mathrm{o}(g) = p^k+ a_1\phi(p^{k+1})+\ldots+a_l\phi(p^{k+l}).
	\end{align}
	Thus we have shown (i).
	
	\smallskip 
	
	\noindent
	(ii) Let $|\widehat{U}(g)|+\mathrm{o}(g)=p^m$ for some $m \in \mathbb{N}$. 
	If $m=k$, then $\pi(g)=0$. Now let $m > k$. Then we have $\pi(g) > 0$. Comparing both sides of \eqref{p.eigenvalue.U.p2}, we thus get $p | (a_1-1)$. This proves (ii).
\end{proof}

Using \Cref{p.eigenvalues} and \Cref{p.eigenvalue.U.p.p}(i), we have the following corollary.

\begin{corollary}\label{p.all.eigenvalues}
	
	Any nonzero Laplacian eigenvalue of $\mathcal{G}(G)$ is $1$ or multiple of the order of a non-identity element of $G$. In particular, it is $1$ or multiple of a positive power of $p$.
\end{corollary}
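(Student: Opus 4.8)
The plan is to combine the classification of Laplacian eigenvalues in \Cref{p.eigenvalues} with the divisibility statement of \Cref{p.eigenvalue.U.p.p}(i), treating the identity element as a separate edge case. First I would fix a nonzero Laplacian eigenvalue $\mu$ of $\mathcal{G}(G)$. By \Cref{p.eigenvalues}, since $\mu \neq 0$, either $\mu = \mathrm{o}(g)$ for some $g \in G$ or $\mu = |\widehat{U}(h)| + \mathrm{o}(h)$ for some $h \in G$, and it suffices to dispose of each of these two forms.

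In the first case, if $g = e$ then $\mu = \mathrm{o}(e) = 1$, which is the exceptional value permitted by the statement; otherwise $g \neq e$ and $\mu = \mathrm{o}(g)$ is itself the order of a non-identity element, hence trivially a multiple of it. In the second case, \Cref{p.eigenvalue.U.p.p}(i) gives that $\mu = |\widehat{U}(h)| + \mathrm{o}(h)$ is a multiple of $\mathrm{o}(h)$. If $h \neq e$, this already exhibits $\mu$ as a multiple of the order of a non-identity element. The only remaining subcase is $h = e$, where $\mu = |\widehat{U}(e)| + \mathrm{o}(e) = n$; here I would note that, $G$ being a finite $p$-group, it contains an element of order $p$, so $n$ is a multiple of the order of that non-identity element. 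This settles the first assertion.

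For the ``in particular'' clause, I would invoke the defining property that in a finite $p$-group the order of every non-identity element is a positive power of $p$. Consequently any value that is a multiple of such an order is a multiple of a positive power of $p$, and together with the exceptional eigenvalue $1$ this yields the stated conclusion.

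The argument is essentially bookkeeping over the outputs of \Cref{p.eigenvalues}, so I do not anticipate a genuine obstacle. The one point that requires care is the treatment of $g = e$ and $h = e$: these are precisely the sources of the eigenvalues $1$ and $n$, and one must confirm that $n$ is still a multiple of a positive power of $p$ rather than a genuine exception. This follows at once from $G$ being a nontrivial $p$-group and thus having an element of order $p$.
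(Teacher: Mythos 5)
Your proposal is correct and follows exactly the route the paper intends: the corollary is stated as an immediate consequence of \Cref{p.eigenvalues} and \Cref{p.eigenvalue.U.p.p}(i), and your case analysis (including the edge cases $g=e$ yielding the eigenvalue $1$ and $h=e$ yielding $n=|G|$, handled via the existence of an element of order $p$, or simply Lagrange's theorem) is precisely the bookkeeping the paper leaves implicit.
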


We conclude this subsection with the following proposition, which also serves as an illustration for some of the above results. 

\begin{proposition}
	If $G$ is a group of order $p^2$, then the Laplacian spectrum of $\mathcal{G}(G)$ is either
	$$\begin{pmatrix}
	0   &  p^2\\
	1  & p^2-1\\
	\end{pmatrix} \text{ or }
	\begin{pmatrix}
	0   &  1 & p & p^2\\
	1  & p & (p+1)(p-2) & 1\\
	\end{pmatrix}.$$
\end{proposition}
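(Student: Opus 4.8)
The plan is to reduce to the two isomorphism types of groups of order $p^2$. Every group of order $p^2$ is abelian, hence isomorphic either to $\mathbb{Z}_{p^2}$ or to $\mathbb{Z}_p \times \mathbb{Z}_p$, so I would treat these two cases separately. The cyclic case is immediate: since $p^2$ is a prime power, \Cref{CompleteCond} gives $\mathcal{G}(\mathbb{Z}_{p^2}) \cong K_{p^2}$, and \Cref{z.prime.power} (with $n = p^2$) yields the Laplacian spectrum consisting of $0$ with multiplicity $1$ and $p^2$ with multiplicity $p^2 - 1$, which is the first of the two matrices.

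The remaining case $G \cong \mathbb{Z}_p \times \mathbb{Z}_p$ is where the work lies. Here every non-identity element has order $p$, so the primitive classes of $e$ are exactly the $\approx$-classes of the elements of order $p$. The $p^2 - 1$ elements of order $p$ partition into $\approx$-classes of size $\phi(p) = p-1$, one class per subgroup of order $p$, giving $\pi(e) = (p^2-1)/(p-1) = p+1$. Since $G$ has no element of order $p^2$, each such class $[h_i]$ satisfies $\pi(h_i) = 0$, so \Cref{p.no.primitive} gives $\Gamma(h_i) \cong K_{p-1}$. Substituting into \eqref{eq.p.Gamma.structure} then yields $\mathcal{G}(G) \cong K_1 \vee (p+1)K_{p-1}$.

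Finally I would extract the spectrum from \eqref{eqpGammacharpol}. With $|G| = p^2$, $\pi(e) = p+1$, and $\Theta(\Gamma(h_i), x-1) = \Theta(K_{p-1}, x-1) = (x-1)(x-p)^{p-2}$ for each $i$, the formula collapses to $\Theta(\mathcal{G}(G), x) = x(x-1)^{p}(x-p)^{(p+1)(p-2)}(x-p^2)$; reading off the exponents gives the eigenvalues $0, 1, p, p^2$ with multiplicities $1, p, (p+1)(p-2), 1$, which is the second matrix. A degree check, $1 + p + (p+1)(p-2) + 1 = p^2$, confirms that no eigenvalue is missed, and for $p = 2$ the factor $(x-p)^{(p+1)(p-2)}$ disappears, consistent with $\mathcal{G}(\mathbb{Z}_2 \times \mathbb{Z}_2) \cong K_1 \vee 3K_1$. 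No genuine obstacle arises: the only care needed is the bookkeeping of primitive classes that identifies the structure $K_1 \vee (p+1)K_{p-1}$, after which \eqref{eqpGammacharpol} does the rest mechanically.
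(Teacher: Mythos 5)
Your proposal is correct and follows essentially the same route as the paper: split into the cyclic case (handled by \Cref{z.prime.power}) and $G \cong \mathbb{Z}_p \times \mathbb{Z}_p$, identify the $p+1$ primitive classes of $e$, each inducing $K_{p-1}$, and apply \eqref{eqpGammacharpol} to obtain $\Theta(\mathcal{G}(G),x) = x(x-1)^p(x-p)^{(p+1)(p-2)}(x-p^2)$. The only cosmetic difference is that the paper lists the primitive classes of $(\overline{0},\overline{0})$ explicitly while you count them as $(p^2-1)/(p-1) = p+1$; your added degree check and the $p=2$ remark are harmless extras.
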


\begin{proof}
	As already recalled in \Cref{z.prime.power}, if $G$ is cyclic, then the Laplacian spectrum of $\mathcal{G}(G)$ is 
	$\begin{pmatrix}
	0   &  p^2\\
	1  &  p^2-1\\
	\end{pmatrix}$. It is known that any group of order $p^2$ is abelian (cf. \cite{roman2012group}). Thus, if $G$ is not cyclic, then $G \cong \mathbb{Z}_p \times  \mathbb{Z}_p$. In $\mathbb{Z}_p \times  \mathbb{Z}_p$, the distinct primitive classes of $(\overline{0},\overline{0})$ are $[(\overline{0}, \overline{1})],[( \overline{1}, \overline{0} )]$, $[( \overline{1}, \overline{1} ) ],[( \overline{1}, \overline{2} ) ], \ldots, [( \overline{1}, \overline{p-1} ) ]$. Moreover, we observe that each of these classes induce subgraphs isomorphic to $K_{p-1}$ in $\mathcal{G}(\mathbb{Z}_p \times  \mathbb{Z}_p)$. Hence by applying \eqref{eqpGammacharpol}, we get
	\begin{align*}
	\Theta(\mathcal{G}(G),x) & = \frac{x(x-p^2)}{x-1} \{\Theta(K_{p-1},x-1)\}^{p+1}\\
	& = x(x-1)^p (x-p)^{(p+1)(p-2)} (x-p^2).
	\end{align*}
	This proves the proposition.  
\end{proof}

\section{Conclusion}

In this article, we obtained characterization of the equality of vertex and algebraic connectivity of power graphs of $\mathbb{Z}_n$, $Q_n$ and finite $p$-groups. Providing a group theoretic characterization of the above equality for all finite groups is still open for  study. Moreover, we proved that the power graph of $Q_n$ is Laplacian integral if and only if $Q_n$ is generalized quaternion, and that the power graph of a finite  $p$-group is always Laplacian integral. Based on our observations, we state the following for $\mathbb{Z}_n$.

\begin{conjecture}
		For any integer $n\geq 2$, the following  statements are equivalent.
		\begin{enumerate}[\rm(i)]
			\item The algebraic connectivity of $\mathcal{G}(\mathbb{Z}_n)$ is an integer.
			\item $\mathcal{G}(\mathbb{Z}_n)$ is Laplacian integral.
			\item $n$ is a prime power or product of two primes.
		\end{enumerate}
\end{conjecture}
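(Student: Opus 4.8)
The plan is to collapse all three statements onto a single integer matrix built from the divisor lattice of $n$, and then to locate the one genuinely hard point. Since $\breve{\mathbb{Z}}_n$ is a clique each of whose vertices is adjacent to everything, we have $\mathcal{G}(\mathbb{Z}_n)\cong K_{\phi(n)+1}\vee\mathcal{G}'(\mathbb{Z}_n)$, so by \Cref{GrLCharJoin} (equivalently \Cref{z.eigenvalues}) the Laplacian spectrum of $\mathcal{G}(\mathbb{Z}_n)$ is obtained from that of $\mathcal{G}'(\mathbb{Z}_n)$ by the integer shift $x\mapsto x+\phi(n)+1$, together with the extra integer eigenvalues $n$ and $0$. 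Hence $\mathcal{G}(\mathbb{Z}_n)$ is Laplacian integral if and only if $\mathcal{G}'(\mathbb{Z}_n)$ is, and---whenever $\mathcal{G}'(\mathbb{Z}_n)$ is connected---the algebraic connectivity of $\mathcal{G}(\mathbb{Z}_n)$ is an integer if and only if that of $\mathcal{G}'(\mathbb{Z}_n)$ is. I would then partition $V(\mathcal{G}'(\mathbb{Z}_n))$ by element order: the cells are $V_d=\{g:\mathrm{o}(g)=d\}$ for proper divisors $1<d<n$, each a clique $K_{\phi(d)}$, and $V_d,V_{d'}$ are completely joined precisely when $d\mid d'$ or $d'\mid d$. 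This partition is equitable, and for any cell (a clique completely joined to all its neighbours) every zero-sum vector supported on $V_d$ is a Laplacian eigenvector with the \emph{integer} eigenvalue $\deg_d+1$. All the remaining eigenvalues are exactly those of the integer quotient matrix $Q_n$ indexed by the proper divisors of $n$.

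This reduces everything to $Q_n$. Indeed $\mathcal{G}'(\mathbb{Z}_n)$ is Laplacian integral if and only if $Q_n$ has integer spectrum; and since $\mathcal{G}'(\mathbb{Z}_n)$ is non-complete as soon as $n$ has two distinct prime divisors, the Fiedler bound $\lambda_{\mathrm{alg}}\le\kappa\le\delta$ gives $\lambda_{\mathrm{alg}}(\mathcal{G}'(\mathbb{Z}_n))\le\delta<\deg_d+1$ for every cell, so the algebraic connectivity lies strictly below every within-cell eigenvalue and is therefore the least positive eigenvalue of $Q_n$. Thus statement (i) reads ``the least positive eigenvalue of $Q_n$ is an integer'' and statement (ii) reads ``$Q_n$ has integer spectrum''; and because $Q_n$ is an integer matrix, each of its rational eigenvalues is automatically an integer, so in both cases ``integer'' may be replaced by ``rational''. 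The implication (ii)$\Rightarrow$(i) is then immediate. For (iii)$\Rightarrow$(ii) I would argue structurally: if $n=p^{\alpha}$ then $\mathcal{G}'(\mathbb{Z}_n)$ is complete by \Cref{CompleteCond}, and if $n=pq$ with $p\ne q$ then the only proper divisors $p,q$ are incomparable, so $\mathcal{G}'(\mathbb{Z}_{pq})\cong K_{p-1}+K_{q-1}$; in either case $\mathcal{G}'(\mathbb{Z}_n)$, and hence $\mathcal{G}(\mathbb{Z}_n)$, is Laplacian integral. This yields (iii)$\Rightarrow$(ii)$\Rightarrow$(i), the easy half.

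It remains to prove (i)$\Rightarrow$(iii), which I would attack by contraposition: assuming $n$ is neither a prime power nor a product of two distinct primes, show that the least positive eigenvalue of the integer matrix $Q_n$ is irrational. By \Cref{SepSetLemma} such $n$ keep $\mathcal{G}'(\mathbb{Z}_n)$ connected, so $Q_n$ is an irreducible weighted quotient Laplacian (with weights built from $\phi(d)$) on the comparability graph of the proper divisors; its least positive eigenvalue is simple, and the associated Fiedler vector tends to concentrate on the \emph{pendant} cells---divisors $d$ comparable to only one other proper divisor, such as $p^2$ and $q$ when $n=p^2q$---which is the bottleneck mechanism producing a small eigenvalue. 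The natural plan is to split on the number of distinct primes: for $n=p^aq^b$ the proper divisors form a two-dimensional grid poset whose quotient Laplacian $Q_n$ is tridiagonal precisely in the smallest case $n=p^2q$ (and its mirror $pq^2$), where one computes the characteristic polynomial $\chi_{Q_n}$ by the Jacobi recursion and checks it has no integer root in $(0,\delta]$, extending thereafter to larger grids and to three or more primes. The main obstacle is exactly this final step: ruling out integer (equivalently rational) roots of $\chi_{Q_n}$ \emph{uniformly}, since the order and shape of $Q_n$ grow without bound with the factorisation of $n$, and---beyond applying the rational-root theorem inside each fixed family, as one can do by hand for $n=p^2q$ and $n=pq^2$---I do not see a single arithmetic invariant (a parity argument, a reduction modulo $p$, or a matrix-tree product count) that forces the bottleneck eigenvalue off the integers in all cases at once. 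Supplying such a structural reason, explaining why the Fiedler value is rational only in the (iii) cases, is where the real difficulty lies, and is presumably why the statement is posed as a conjecture.
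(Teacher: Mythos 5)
The statement you were asked to prove is posed in the paper as a \emph{conjecture}: the paper contains no proof of it, only the observations that support it — prime powers give complete power graphs (\Cref{CompleteCond}, \Cref{z.prime.power}), and $n=pq$ gives algebraic connectivity $\phi(n)+1$ (\Cref{z.con.algcon} and the results cited there). So there is no argument of the paper's to match yours against, and a complete blind proof should not have been expected.

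Measured on its own terms, your attempt is sound exactly as far as you claim. The join decomposition $\mathcal{G}(\mathbb{Z}_n)\cong K_{\phi(n)+1}\vee\mathcal{G}'(\mathbb{Z}_n)$ and the resulting integer shift of the spectrum (the content of \Cref{z.eigenvalues}) correctly reduce (i) and (ii) to $\mathcal{G}'(\mathbb{Z}_n)$; the partition of $\mathbb{Z}'_n$ into order-classes $V_d\cong K_{\phi(d)}$, completely joined exactly along divisibility, is indeed equitable; the within-cell eigenvalues $\deg_d+1$ are integers; and your Fiedler-bound argument ($\lambda_{\mathrm{alg}}\leq\kappa\leq\delta<\deg_d+1$, using \Cref{SepSetLemma} for connectedness) correctly places the algebraic connectivity among the eigenvalues of the integer quotient matrix $Q_n$, where rational forces integer. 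Your (iii)$\Rightarrow$(ii)$\Rightarrow$(i), including $\mathcal{G}'(\mathbb{Z}_{pq})\cong K_{p-1}+K_{q-1}$, is complete and correct. The genuine gap is the one you name yourself: (i)$\Rightarrow$(iii), i.e., irrationality of the least positive eigenvalue of $Q_n$ whenever $n$ is neither a prime power nor a product of two primes. Moreover, your sketched attack on it rests on assertions you do not establish — that the least positive eigenvalue of $Q_n$ is simple (algebraic connectivity can have multiplicity greater than one in general), that the Fiedler vector concentrates on pendant cells, and that the hand-checkable families $p^2q$, $pq^2$ extend uniformly to arbitrary factorizations. Since that implication is precisely what is open, your proposal should be read as a correct proof of the easy implications together with a clean reduction of the conjecture to an arithmetic question about the divisor-quotient matrix $Q_n$ — a reduction that goes beyond anything in the paper — but not as a proof of the conjecture, which is the honest assessment you give at the end.
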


\section{Acknowledgment}
The author is thankful to Dr. K. V. Krishna and Prof. A. R. Moghaddamfar for their valuable comments which have improved the article. 	During the preparation of the article, the author was supported by the institute fellowship of IIT Guwahati, India.
 
%\bibliographystyle{abbrv}
%\bibliography{references}

\begin{thebibliography}{10}
	
	\bibitem{MR687893}
	E.~A. Bertram.
	\newblock Some applications of graph theory to finite groups.
	\newblock {\em Discrete Math.}, 44(1):31--43, 1983.
	
	\bibitem{bubbo17}
	D.~Bubboloni, M.~A. Iranmanesh, and S.~M. Shaker.
	\newblock On some graphs associated with the finite alternating groups.
	\newblock {\em Comm. Algebra}, 45(12):5355--5373, 2017.
	
	\bibitem{Cameron}
	P.~J. Cameron.
	\newblock The power graph of a finite group, {II}.
	\newblock {\em J. Group Theory}, 13(6):779--783, 2010.
	
	\bibitem{Ghosh}
	P.~J. Cameron and S.~Ghosh.
	\newblock The power graph of a finite group.
	\newblock {\em Discrete Math.}, 311(13):1220--1222, 2011.
	
	\bibitem{cayley1878desiderata}
	P.~Cayley.
	\newblock Desiderata and {S}uggestions: {N}o. 2. {T}he {T}heory of {G}roups:
	{G}raphical {R}epresentation.
	\newblock {\em Amer. J. Math.}, 1(2):174--176, 1878.
	
	\bibitem{GhoshSensemigroups}
	I.~Chakrabarty, S.~Ghosh, and M.~K. Sen.
	\newblock Undirected power graphs of semigroups.
	\newblock {\em Semigroup Forum}, 78(3):410--426, 2009.
	
	\bibitem{sriparna.thesis}
	S.~Chattopadhyay.
	\newblock Some graph theoretic and spectral results on power graphs of certain
	finite groups, {PhD Thesis}.
	\newblock IIT Kharagpur, India, 2015.
	
	\bibitem{ChattopadhyayConnectivity}
	S.~Chattopadhyay and P.~Panigrahi.
	\newblock Connectivity and planarity of power graphs of finite cyclic, dihedral
	and dicyclic groups.
	\newblock {\em Algebra Discrete Math.}, 18(1):42--49, 2014.
	
	\bibitem{chattopadhyay2015laplacian}
	S.~Chattopadhyay and P.~Panigrahi.
	\newblock On {L}aplacian spectrum of power graphs of finite cyclic and dihedral
	groups.
	\newblock {\em Linear Multilinear Algebra}, 63(7):1345--1355, 2015.
	
	\bibitem{power2018radius}
	S.~Chattopadhyay, P.~Panigrahi, and F.~Atik.
	\newblock Spectral radius of power graphs on certain finite groups.
	\newblock {\em Indag. Math. (N.S.)}, 29(2):730--737, 2018.
	
	\bibitem{ConPowerGr2}
	S.~Chattopadhyay, K.~L. Patra, and B.~K. Sahoo.
	\newblock Vertex connectivity of the power graph of a finite cyclic group.
	\newblock {\em Discrete Appl. Math.}, 2018.
	\newblock DOI:10.1016/j.dam.2018.06.001.
	
	\bibitem{curtin2014edge}
	B.~Curtin and G.~R. Pourgholi.
	\newblock Edge-maximality of power graphs of finite cyclic groups.
	\newblock {\em J. Algebraic Combin.}, 40(2):313--330, 2014.
	
	\bibitem{cvetkovic2010spectra}
	D.~Cvetkovi\'c, P.~Rowlinson, and S.~Simi\'c.
	\newblock {\em An introduction to the theory of graph spectra}, volume~75 of
	{\em London Mathematical Society Student Texts}.
	\newblock Cambridge University Press, Cambridge, 2010.
	
	\bibitem{doostabadi2015connectivity}
	A.~Doostabadi and M.~F.~D. Ghouchan.
	\newblock On the connectivity of proper power graphs of finite groups.
	\newblock {\em Comm. Algebra}, 43(10):4305--4319, 2015.
	
	\bibitem{fiedler1973algebraic}
	M.~Fiedler.
	\newblock Algebraic connectivity of graphs.
	\newblock {\em Czechoslovak Math. J.}, 23(98):298--305, 1973.
	
	\bibitem{hamzeh2017autogroup}
	A.~Hamzeh and A.~R. Ashrafi.
	\newblock Automorphism groups of supergraphs of the power graph of a finite
	group.
	\newblock {\em European J. Combin.}, 60:82--88, 2017.
	
	\bibitem{jech}
	T.~Jech.
	\newblock {\em Set theory}.
	\newblock Springer Monographs in Mathematics. Springer-Verlag, 2003.
	
	\bibitem{kelarev2000combinatorial}
	A.~V. Kelarev and S.~J. Quinn.
	\newblock A combinatorial property and power graphs of groups.
	\newblock In {\em Contributions to general algebra, 12 ({V}ienna, 1999)}, pages
	229--235. Heyn, Klagenfurt, 2000.
	
	\bibitem{MR1873608}
	S.~J. Kirkland, J.~J. Molitierno, M.~Neumann, and B.~L. Shader.
	\newblock On graphs with equal algebraic and vertex connectivity.
	\newblock {\em Linear Algebra Appl.}, 341:45--56, 2002.
	
	\bibitem{ma2015chromatic}
	X.~Ma and M.~Feng.
	\newblock On the chromatic number of the power graph of a finite group.
	\newblock {\em Indag. Math. (N.S.)}, 26(4):626--633, 2015.
	
	\bibitem{Spectra17}
	Z.~Mehranian, A.~Gholami, and A.~R. Ashrafi.
	\newblock The spectra of power graphs of certain finite groups.
	\newblock {\em Linear Multilinear Algebra}, 65(5):1003--1010, 2017.
	
	\bibitem{MirzargarPG}
	M.~Mirzargar, A.~R. Ashrafi, and M.~J. Nadjafi-Arani.
	\newblock On the power graph of a finite group.
	\newblock {\em Filomat}, 26(6):1201--1208, 2012.
	
	\bibitem{MR3200118}
	A.~R. Moghaddamfar, S.~Rahbariyan, and W.~J. Shi.
	\newblock Certain properties of the power graph associated with a finite group.
	\newblock {\em J. Algebra Appl.}, 13(7):1450040, 18, 2014.
	
	\bibitem{mohar1991laplacian}
	B.~Mohar.
	\newblock The {L}aplacian spectrum of graphs.
	\newblock In {\em Graph theory, combinatorics, and applications. {V}ol.\ 2
		({K}alamazoo, {MI}, 1988)}, Wiley-Intersci. Publ., pages 871--898. Wiley, New
	York, 1991.
	
	\bibitem{ConPowerGr17}
	R.~P. Panda and K.~V. Krishna.
	\newblock On connectedness of power graphs of finite groups.
	\newblock {\em J. Algebra Appl.}, 17(10):1850184, 20, 2018.
	
	\bibitem{MinDegPowerGr17}
	R.~P. Panda and K.~V. Krishna.
	\newblock On minimum degree, edge-connectivity and connectivity of power graphs
	of finite groups.
	\newblock {\em Comm. Algebra}, 46(7):3182--3197, 2018.
	
	\bibitem{roman2012group}
	S.~Roman.
	\newblock {\em Fundamentals of group theory}.
	\newblock Birkh\"auser/Springer, New York, 2012.
	
	\bibitem{MR617092}
	J.~S. Williams.
	\newblock Prime graph components of finite groups.
	\newblock {\em J. Algebra}, 69(2):487--513, 1981.
	
	\bibitem{zelinka1975intersection}
	B.~Zelinka.
	\newblock Intersection graphs of finite abelian groups.
	\newblock {\em Czechoslovak Math. J.}, 25(100):171--174, 1975.
	
\end{thebibliography}

\end{document}